%%%%%%%%%%%%%%%%%%%%%%%%%%%%%%%%%%%%%%%%%%%%%

%

% 20 December 2010

%

%%%%%%%%%%%%%%%%%%%%%%%%%%%%%%%%%%%%%%%%%%%%%%%

\documentclass[a4paper]{amsart}

\usepackage{amssymb}
\usepackage{latexsym}
\usepackage{amsmath}
\usepackage{euscript}
\usepackage{hyperref}
%\usepackage[notcite,notref]{showkeys}
%\usepackage{refcheck}

%\sloppy

\numberwithin{equation}{section}
%\numberwithin{thm}{section}

%\newcounter{counter_a}
%\newenvironment{myenum}{\begin{list}{{\rm(\roman{counter_a})}}%
%{\usecounter{counter_a}
%\setlength{\itemsep}{1.8ex}\setlength{\topsep}{-0.5ex}
%\setlength{\leftmargin}{5ex}\setlength{\labelwidth}{5ex}}}{\end{list}}

\newcommand{\dom}{{\mathrm{dom\,}}}
\newcommand{\ran}{{\mathrm{ran\,}}}

\DeclareMathOperator{\supp}{supp}

\newtheorem{theorem}{Theorem}[section]
\newtheorem{proposition}[theorem]{Proposition}
\newtheorem{lemma}[theorem]{Lemma}
\newtheorem{corollary}[theorem]{Corollary}

\newtheorem{definition}[theorem]{Definition}
\newtheorem{remark}[theorem]{Remark}
%\newtheorem{ass}[thm]{Assumptions}

%\newproof{pf}{Proof}
%\newenvironment{proof}{\begin{pf}}{\hspace*{\fill}$\square$\end{pf}}
%\newproof{pfkrein}{Proof of Theorem~\ref{th.krein}}
%\newenvironment{proofkrein}{\begin{pfkrein}}{\hspace*{\fill}$\square$\end{pfkrein}}

%\newenvironment{proof}%
%{\begin{sloppypar}\noindent{\bf Proof.}}%
%{\hspace*{\fill}$\square$\end{sloppypar}\bigskip}

\newcommand{\ba}{\begin{array}}
\newcommand{\ea}{\end{array}}
\newcommand{\bea}{\begin{eqnarray}}
\newcommand{\eea}{\end{eqnarray}}
\newcommand{\bead}{\begin{eqnarray*}}
\newcommand{\eead}{\end{eqnarray*}}
\newcommand{\be}{\begin{equation}}
\newcommand{\ee}{\end{equation}}
\newcommand{\bed}{\begin{displaymath}}
\newcommand{\eed}{\end{displaymath}}
\newcommand{\bl}{\begin{lem}}
\newcommand{\el}{\end{lem}}
\newcommand{\bp}{\begin{prop}}
\newcommand{\ep}{\end{prop}}
\newcommand{\bt}{\begin{thm}}
\newcommand{\et}{\end{thm}}

\newcommand{\bc}{\begin{cor}}
\newcommand{\ec}{\end{cor}}

\newcommand{\br}{\begin{rem}}
\newcommand{\er}{\end{rem}}
\newcommand{\bd}{\begin{defn}}
\newcommand{\ed}{\end{defn}}

\DeclareMathOperator\grad{grad}
\DeclareMathOperator\Real{Re}
\DeclareMathOperator\Imag{Im}
\renewcommand\Re{\Real}
\renewcommand\Im{\Imag}

\newcommand\cB{\mathcal B}
\newcommand\cD{\mathcal D}

\newcommand\cG{\mathcal G}
\newcommand\cH{\mathcal H}
\newcommand\cK{\mathcal K}

\newcommand\cL{\mathcal L}

\newcommand\cN{\mathcal N}

\newcommand\CC{\mathbb C}
\newcommand\NN{\mathbb N}
\newcommand\RR{\mathbb R}

\newcommand\frA{\mathfrak A}
\newcommand\frB{\mathfrak B}
\newcommand\frS{\mathfrak S}
\newcommand\fra{\mathfrak a}

\newcommand\fraN{\mathfrak{a}}

\newcommand\dis{\displaystyle}
\newcommand\ov{\overline}
\newcommand\wt{\widetilde}
\newcommand\wh{\widehat}
\newcommand{\defeq}{\mathrel{\mathop:}=}
\newcommand{\eqdef}{=\mathrel{\mathop:}}
\newcommand{\defequ}{\mathrel{\mathop:}\hspace*{-0.72ex}&=}

\newcommand\vect[2]{\begin{pmatrix} #1 \\[1ex] #2 \end{pmatrix}}
\newcommand\sess{\sigma_{\rm ess}}

\newcommand\AD{A_{\rm D}}

\newcommand\tAN{\widetilde{A}_{\rm N}}
\newcommand\AN{A_{\rm N}}

\newcommand\void[1]{}

\def\AD{A_{\rm D}}
\def\AN{A_{\rm N}}

\def\sess{\sigma_{\rm ess}}

\DeclareMathOperator\tr{tr}
%\newcommand\vect[2]{\binom{#1}{#2}}
%\newcommand\vect[2]{\left(\begin{smallmatrix}#1 \\ #2\end{smallmatrix}\right)}

%%%%

\def\sA{{\mathfrak A}}   \def\sB{{\mathfrak B}}

\def\sS{{\mathfrak S}}

      \def\dC{{\mathbb C}}

   \def\dN{{\mathbb N}}   
      \def\dR{{\mathbb R}}

   \def\cB{{\mathcal B}}   \def\cC{{\mathcal C}}
\def\cD{{\mathcal D}}      
\def\cG{{\mathcal G}}   \def\cH{{\mathcal H}}   
   \def\cK{{\mathcal K}}   \def\cL{{\mathcal L}}
   \def\cN{{\mathcal N}}

\def\mul{{\text{\rm mul\,}}}

%\newcommand{\Imag}{\mbox{{\rm Im}}\,}
%\newcommand{\Real}{\mbox{{\rm Re}}\,}

% Title, authors and addresses

% use the thanksref command within \title, \author or \address for footnotes;

% use the corauthref command within \author for corresponding author footnotes;

% use the ead command for the email address,

% and the form \ead[url] for the home page:

\begin{document}

\title[Spectral estimates for resolvent differences]{
Spectral estimates for resolvent differences of \\ self-adjoint elliptic operators
}

\author{Jussi Behrndt}
\address{Institut f\"{u}r Mathematik, MA 6-4,
Technische Universit\"{a}t Berlin,
Strasse des 17.~Juni 136,
D-10623 Berlin, Germany}
\email{behrndt@math.tu-berlin.de}

\author{Matthias Langer}

\address{Department of Mathematics and Statistics,
University of Strathclyde, 
26 Richmond Street, Glasgow G1 1XH, United Kingdom}
\email{m.langer@strath.ac.uk}

\author{Vladimir Lotoreichik}

\address{Department of Mathematics,
St.~Petersburg State University of  Information Technologies, Mechanics and
Optics, 
Kronverkskiy, 49,  St.~Petersburg, Russia}
\email{vladimir.lotoreichik@gmail.com}

\begin{abstract}
The notion of quasi boundary triples and their Weyl functions is an abstract concept
to treat spectral and boundary value problems for elliptic partial differential equations.
In the present paper the abstract notion is further developed, and general theorems 
on resolvent differences belonging to operator ideals are proved. The results are applied 
to second order elliptic differential operators on bounded and exterior domains,
and to partial differential operators with $\delta$ and $\delta'$-potentials 
supported on hypersurfaces.
\end{abstract}

\subjclass[2000]{Primary 35P05 \ 35P20; Secondary 47F05 \ 47L20 \ 81C15 \ 81Q10}
\keywords{elliptic operator \
self-adjoint extension \ 
operator ideal \
$\delta$ and $\delta'$-potential}

\maketitle

\section{Introduction}

The extension theory of symmetric operators in Hilbert spaces was one of the
major advances in operator theory in the 20th century, which has numerous
applications to problems in mathematics and physics, among them, differential operators,
moment and interpolation problems, to mention just a few.
There are various approaches to the extension problem of symmetric operators, e.g.\
the use of deficiency subspaces as developed by J.~von Neumann \cite{vN29}
and quadratic form methods as used by
K.\,O.~Friedrichs~\cite{F34}. The extension theory was further developed by
M.\,G.~Kre\u{\i}n~\cite{K47}, M.\,I.~Vishik~\cite{V52}, M.\,Sh.~Birman~\cite{B56},
G.~Grubb~\cite{G68}, and T.~Ando and K.~Nishio \cite{AN70}.
Moreover, the papers \cite{V52} and \cite{G68} contain important applications 
to elliptic differential operators; see also \cite{G70,G71,G74}.
A more recent concept in extension theory of symmetric operators is the notion 
of boundary triples introduced by A.\,N.~Kochubei~\cite{Ko75},
V.\,M.~Bruk~\cite{Br76}, and further studied by V.\,I.~Gorbachuk and
M.\,L.~Gorbachuk~\cite{GG91}, and V.\,A.~Derkach and M.\,M.~Malamud~\cite{DM91,DM95};
a similar abstract concept was already proposed by J.\,W.~Calkin \cite{C39}.

In the approach with boundary triples self-adjoint extensions of a symmetric
operator $A$ in a Hilbert space $\cH$ are described via abstract boundary conditions.
Roughly speaking, two boundary mappings $\Gamma_0$, $\Gamma_1$ are used, which are
defined on the domain of the maximal operator
(i.e.\ the adjoint $A^*$ of the symmetric operator $A$),
map into an auxiliary Hilbert space $\cG$ (the space of boundary values) and satisfy an
abstract Green identity
\begin{equation}\label{introgreen}
  (A^*f,g)-(f,A^*g) = (\Gamma_1f,\Gamma_0g)-(\Gamma_0f,\Gamma_1g)
\end{equation}
where the inner products on the left-hand side are in $\cH$, the ones on the
right-hand side are in the boundary space $\cG$.
The self-adjoint extensions $A_\Theta$ are characterized as restrictions of
$A^*$ to the set of elements $f$ satisfying the abstract boundary condition
\begin{equation}\label{boundc}
  \binom{\Gamma_0 f}{\Gamma_1 f} \in \Theta,
\end{equation}
where $\Theta$ is a self-adjoint linear relation in $\cG$, i.e.\ a ``self-adjoint'' 
subspace of $\cG\times\cG$ (see Section~\ref{2.1} for a discussion about linear relations).
The theory of boundary triples was successfully applied in many situations, in particular,
ordinary differential operators, see, e.g.\ \cite{BL10,BMN08,BMN02,BGP08,DHS03,KM10}.
For second order differential operators on an interval
one usually chooses $\Gamma_0$ to give Dirichlet data at endpoints of the interval
and $\Gamma_1$ Neumann data, or vice versa.

For elliptic partial differential operators the same approach with the boundary mappings
$\Gamma_0$ and $\Gamma_1$ as the Dirichlet trace and the
conormal derivative, respectively, leads to serious
difficulties since Green's identity does not make sense
on the whole domain of the maximal operator.  Moreover, a surjectivity condition
for the boundary mappings that is imposed for boundary triples is also not satisfied.
Based on ideas from \cite{G68}, a boundary triple with regularized versions
of trace and conormal derivatives was used for elliptic operators
in \cite{BGW09,BMNW08,M10}.
However, in order to work with the usual trace and conormal derivative,
a generalization of the notion of boundary triples was introduced in \cite{BL07}:
quasi boundary triples.  In this setting the boundary mappings $\Gamma_0$ and $\Gamma_1$ are not defined on the
whole domain of the maximal operator $A^*$ but only on the domain of some restriction
$T$ whose closure is $A^*$; the abstract Green identity \eqref{introgreen} holds then with $A^*$ replaced by $T$.  For elliptic operators on a bounded domain $\Omega$ one
can choose $T$, for instance, to be defined on $H^2(\Omega)$, and therefore also the
boundary mappings are defined on $H^2(\Omega)$, which is much smaller than the
maximal domain.
The aim of the current paper is to develop the theory of quasi boundary triples
further and use it to prove new results in spectral theory.
We apply these results to elliptic operators on bounded and exterior domains
and to partial differential operators with $\delta$ and $\delta'$-potentials
supported on hypersurfaces in $\RR^n$.

In the following, let $A$ be a symmetric operator in a Hilbert space $\cH$ and
let $\{\cG,\Gamma_0,\Gamma_1\}$ be a quasi boundary triple for $A^*=\overline T$, 
see also Definition~\ref{def:qbt}.
Besides formula \eqref{introgreen} with $A^*$ replaced by $T$ and a density condition 
on the range of the boundary mappings $\Gamma_0$ and $\Gamma_1$, it is also assumed 
in the definition of a quasi boundary triple
that the restriction $A_0$ of $T$ to $\ker\Gamma_0$
is a self-adjoint operator. This operator is often used as a reference extension of $A$
which other extensions of $A$ are compared with.
A very important object that is associated with a quasi boundary triple is the
Weyl function $M(\lambda)$, which, for $\lambda\in\rho(A_0)$,
is an operator in $\cG$ that satisfies
\[
  \Gamma_1f = M(\lambda)\Gamma_0f,
\]
for $f\in\ker(T-\lambda)$.  Hence $M(\lambda)$ connects the two ``boundary values'' $\Gamma_0f$ and
$\Gamma_1f$ for solutions of the equation $Tf=\lambda f$.
In our treatment of elliptic operators in Sections~\ref{4.1} and \ref{4.2} it
will turn out that $M(\lambda)$ is the Neumann-to-Dirichlet map.

In the quasi boundary triple framework a self-adjoint relation $\Theta$ in $\cG$ as abstract
boundary condition in \eqref{boundc} does not automatically induce a self-adjoint 
restriction $A_\Theta$ of $T$ in $\cH$ (as is the case for boundary triples) but 
only a symmetric operator $A_\Theta$. In Theorem~\ref{th.A_Th_self_adj} we provide a 
sufficient condition on the Weyl function $M(\lambda)$ and $\Theta$ so that
the operator $A_\Theta$ becomes self-adjoint.
Applied to elliptic operators, this theorem yields a wide class of local and non-local
boundary conditions for which there exists a self-adjoint realization in
an $H^2$-setting (Theorem~\ref{condsa} and Corollary~\ref{condsacor}).
The proof of Theorem~\ref{th.A_Th_self_adj} uses Krein's formula, in which
the resolvents of $A_\Theta$ and $A_0$ are compared, namely
\[
  (A_\Theta-\lambda)^{-1} = (A_0-\lambda)^{-1}
  + \gamma(\lambda)\bigl(\Theta-M(\lambda)\bigr)^{-1}\gamma(\ov\lambda)^*,
\]
where $\gamma(\lambda)$ is the $\gamma$-field and maps elements
$\varphi\in\ran\Gamma_0\subset\cG$ onto solutions $f$ of $Tf=\lambda f$
with $\Gamma_0f=\varphi$; see Theorem~\ref{th.krein}.
Actually, we provide the formula also in the case when $A_\Theta-\lambda$
is not necessarily surjective but only injective and $\lambda\in\rho(A_0)$;
the formula then has to be read so that it is applied only to elements
in $\ran(A_\Theta-\lambda)$.

Krein's formula is also an important ingredient in the proofs of the results of the core
section~\ref{3.3} in the abstract part of the present paper.  There we prove spectral 
estimates for resolvent differences, in particular, the resolvent difference
\begin{equation}\label{resolv_diff0}
  (A_\Theta-\lambda)^{-1} - (A_0-\lambda)^{-1}
\end{equation}
of a self-adjoint extension $A_\Theta$ described by an abstract boundary
condition \eqref{boundc} and the fixed self-adjoint extension $A_0$.
More precisely, we prove that the resolvent difference is in some operator
ideal provided that $\gamma(\lambda)^*$ is in some related operator ideal;
see Theorem~\ref{th.resolv_diff1} and the following theorems.
The use of operator ideals gives a very general tool to study
resolvent differences but includes, in particular, spectral estimates
of Schatten--von~Neumann type, i.e.\ that the singular values $s_k$ of
\eqref{resolv_diff0} satisfy
\[
  s_k = O(k^{-r})\,\,\,\,\text{or}\,\,\,\,s_k = o(k^{-r}), \quad k\to\infty, 
  \qquad\text{or}\qquad \sum_{k=1}^\infty s_k^p<\infty
\]
for some $r>0$ or $p>0$.
We investigate also the resolvent difference of $A_{\Theta_1}$ and $A_{\Theta_2}$
for two abstract boundary conditions $\Theta_1$, $\Theta_2$ under some
assumptions on $\Theta_1-\Theta_2$; see Theorem~\ref{th.resolv_diff4}.

As mentioned above the first class of operators to which we apply our abstract
results is connected with elliptic partial differential expressions; we study
expressions of the form
\begin{equation}\label{intr.ell_expr}
  \cL = -\sum_{j,k=1}^n \frac{\partial}{\partial x_j} a_{jk}
  \frac{\partial}{\partial x_k}+ a
\end{equation}
on a domain $\Omega$ in $\RR^n$ with compact $C^\infty$-boundary $\partial\Omega$.
The domain $\Omega$ itself is allowed to be either bounded or the complement
of a bounded set.
We define the associated operator $T$ on $H^2(\Omega)$ if $\Omega$ is bounded, 
and on a set of functions which are in $H^2$ in a neighbourhood of $\partial\Omega$
if $\Omega$ is unbounded; for details see Definition~\ref{domt}.
For the space of boundary values $\cG$ we choose $L^2(\partial\Omega)$, and the
boundary mappings are defined by
\[
  \Gamma_0f =  \frac{\partial f}{\partial\nu_\cL}\Bigl|_{\partial\Omega} \defeq
  \sum_{j,k=1}^n a_{jk} n_j \frac{\partial f}{\partial x_k}
  \Bigl|_{\partial\Omega}\quad\text{and} \quad \Gamma_1f = f\big|_{\partial\Omega},
\]
where $n(x)=(n_1(x),\dots, n_n(x))^\top$ is the unit vector at the point
$x\in\partial\Omega$ pointing out of $\Omega$.
After having established in Theorem~\ref{qbtthm} that
$\{L^2(\partial\Omega),\Gamma_0,\Gamma_1\}$ is a quasi boundary triple,
we apply our abstract results from Section~\ref{sec3}.
In Theorem~\ref{condsa} we prove that, for an arbitrary
bounded self-adjoint operator $B$ in $L^2(\partial\Omega)$ that satisfies
$B(H^1(\partial\Omega))\subset H^{1/2}(\partial\Omega)$, the elliptic
expression $\cL$ together with the boundary condition
\begin{equation}\label{boundcell}
  B\bigl(f\big|_{\partial\Omega}\bigr)
  = \frac{\partial f}{\partial\nu_\cL}\Bigl|_{\partial\Omega}
\end{equation}
gives rise to a self-adjoint operator $L^2(\Omega)$ whose domain consists of
functions $f$ which are in $H^2$ in a neighbourhood of the boundary $\partial\Omega$.
The boundary condition in \eqref{boundcell} corresponds to the abstract boundary
condition \eqref{boundc} with $\Theta=B^{-1}$ and contains a large class of Robin boundary
conditions but also non-local boundary conditions.

In order to describe our main results on spectral estimates of resolvent differences
of elliptic operators, we use the following notation here in the introduction.  We write
\begin{equation}\label{notation_resolv_diff}
  H_1 \stackrel{r}{\text{------}} H_2,
\end{equation}
if the singular values $s_k$ of the resolvent difference
$(H_1-\lambda)^{-1} - (H_2-\lambda)^{-1}$
of two self-adjoint operators $H_1$, $H_2$ satisfy
$s_k = O(k^{-r})$, $k\to\infty$,
for all $\lambda\in\rho(H_1)\cap \rho(H_2)$.
In Theorem~\ref{thm1} we prove that
\begin{equation}\label{resdiff_intr1}
  \AN \stackrel{\frac{3}{n-1}}{\text{-----------}} A_\Theta,
\end{equation}
where $\AN$ is the Neumann realization of $\cL$ and
$\Theta$ is a self-adjoint relation in $L^2(\partial\Omega)$ so that
$0\notin\sess(\Theta)$ and $A_\Theta$ is self-adjoint.
For instance,  $\Theta=B^{-1}$ with a bounded self-adjoint $B$ as above,
i.e.\ the partial differential operator with boundary condition \eqref{boundcell},
leads to \eqref{resdiff_intr1}.  A slightly weaker result for the Laplacian
on bounded domains was proved in \cite{BLLLP10}.
M.\,Sh.~Birman \cite{B62} proved that
\[
  \AD \stackrel{\frac{2}{n-1}}{\text{-----------}} \AN,
\]
and later M.\,Sh.~Birman and M.\,Z.~Solomjak \cite{BS80} and G.~Grubb \cite{G84,G84a}
further investigated this relation and obtained the exact spectral
asymptotics of the resolvent difference.  In general, the operator $A_\Theta$ as
above is closer to the Neumann operator $\AN$ in the sense of \eqref{resdiff_intr1}
then to the Dirichlet operator $\AD$.
If $n=2$ or $n=3$, then the resolvent difference of $\AN$ and $A_\Theta$ is
a trace class operator by \eqref{resdiff_intr1}; in Corollary~\ref{co.trace_ell}
we obtain a trace formula for this resolvent difference, which involves the
Neumann-to-Dirichlet map and $\Theta$.
We can compare also two operators with non-local boundary conditions
$A_{\Theta_1}$, $A_{\Theta_2}$ under some assumption on $\Theta_1-\Theta_2$, namely
in Theorem~\ref{thm3} if $s_k(\Theta_1-\Theta_2)=O(k^{-r})$, $k\to\infty$, then
\[
  A_{\Theta_1} \stackrel{\frac{3}{n-1}+r}{\text{--------------}} A_{\Theta_2}.
\]

The second class of operators we study and to which we apply our abstract
results contains elliptic operators on $\RR^n$ with additional $\delta$ and
$\delta'$-potentials which are supported on a bounded $C^\infty$-hypersurface
$\Sigma$, which splits $\RR^n$ into two components:
an interior domain $\Omega_{\rm i}$, which is bounded,
and an exterior domain $\Omega_{\rm e}$.

The spectral theory of Schr\"odinger operators with $\delta$-potentials on
surfaces is developed since the late 80s; see, e.g.\ the papers
\cite{AGS87,BEKS94,E08,EF09,EI01,EK03,S88}.
Nevertheless, several questions remained open.
One of the open questions is to find the domain of
self-adjointness of the operator with $\delta$-potential in the scale of Sobolev
spaces under suitable assumptions on the smoothness of the strength of the
potential. Another question pointed out by P.~Exner in his survey paper~\cite{E08}
is to find a way how to treat $\delta^\prime$-potentials on surfaces.
The case of $\delta'$-potentials is more difficult than that
of $\delta$-potentials because that kind of perturbations are not
form-bounded; see~\cite{E08}.

In Section~\ref{4.3} we use quasi boundary triples and our abstract results
from Section~\ref{sec3} to construct self-adjoint operators
$A_{\delta,\alpha}$ and $A_{\delta',\beta}$ that are differential operators
connected with an elliptic expression $\cL$ as in \eqref{intr.ell_expr} on
$\RR^n$ with interface conditions
\[
  f_{\rm e}\vert_\Sigma = f_{\rm i}\vert_\Sigma, \qquad
  \frac{\partial f_{\rm i}}{\partial\nu_{\cL_{\rm i}}}\Bigl|_{\Sigma}
  +\frac{\partial f_{\rm e} }{\partial\nu_{\cL_{\rm e}}}\Bigl|_{\Sigma}
  = \alpha f\vert_\Sigma,
\]
and
\[
  \frac{\partial f_{\rm i}}{\partial\nu_{\cL_{\rm i}}}\Bigl|_{\Sigma}
  = - \frac{\partial f_{\rm e} }{\partial\nu_{\cL_{\rm e}}}\Bigl|_{\Sigma}, \qquad
  f_{\rm e}\vert_\Sigma - f_{\rm i}\vert_\Sigma =
  \beta \frac{\partial f_{\rm e}}{\partial \nu_{\cL_{\rm e}}}\Bigl|_{\Sigma},
\]
respectively, where $f_{\rm i}$ and $f_{\rm e}$ are the restrictions of $f$
to $\Omega_{\rm i}$ and $\Omega_{\rm e}$; here $\alpha$ and $\beta$ are real-valued
functions in $C^1(\Sigma)$ with $\beta\ne0$ on $\Sigma$ (see Theorem~\ref{thm.sa2}).
These operators can be interpreted as operators with additional $\delta$ and
$\delta'$-potentials of strengths $\alpha$ and $\beta$, respectively.
Finally, we compare these operators with the elliptic operator $A_{\rm free}$
on $\RR^n$ associated with $\cL$ and the direct sums, $A_{\rm N,i}\oplus A_{\rm N,e}$,
$A_{\rm D,i}\oplus A_{\rm D,e}$, of the Neumann and Dirichlet
operators on the interior and exterior domains.
Using our abstract results on resolvent differences we obtain
\[
  A_{\rm N,i}\oplus A_{\rm N,e}
  \stackrel{\frac{3}{n-1}}{\text{\textendash\textendash\textendash\textendash\textendash}}
  A_{\rm\delta^{\prime},\beta}
  \stackrel{\frac{2}{n-1}}{\text{\textendash\textendash\textendash\textendash\textendash}}
  A_{\rm free}
  \stackrel{\frac{3}{n-1}}{\text{\textendash\textendash\textendash\textendash\textendash}}
  A_{\rm\delta,\alpha}
  \stackrel{\frac{2}{n-1}}{\text{\textendash\textendash\textendash\textendash\textendash}}
  A_{\rm D,i}\oplus A_{\rm D,e},
\]
where we used the notation from \eqref{notation_resolv_diff}; see \eqref{resdiff9}
and Theorems~\ref{thm4} and \ref{thm5}.

We mention here that, independently, V.~Ryzhov developed a concept that has
similarities to the concept of quasi boundary triples in \cite{R07,R09}.
Moreover, for extension theory of elliptic operators on non-smooth domains
and Dirichlet-to-Neumann maps we refer to the recent contributions  \cite{AGW10,AtE,AM08,GM08-1,GM08-2,GM09,G08,PR09}.
Spectral properties of resolvent differences using
pseudodifferential methods were recently also studied in \cite{Garxiv4,Garxiv5}.
Let us also mention other generalizations of boundary triples, e.g.\
\cite{AP04,A00,DHMS06,DHMS09,DM95,KK04,MS07,Mog06,Mog09,Posil04,P07}.

The contents of the paper is as follows.
In  Sections~\ref{2.1} and~\ref{2.2} we recall some preliminary material on
linear relations and operator ideals, and prove some lemmas that are needed later.
Our abstract results are contained in Section~\ref{sec3}. In Section~\ref{3.1} we recall
the concept of quasi boundary triples and some basic facts and complement these
with some results, e.g.\ about the imaginary part of the Weyl function.
We formulate and prove our results also for non-densely
defined symmetric operators and relations.
Section~\ref{3.2} contains the statement and proof of Krein's formula (Theorem~\ref{th.krein})
and its application to self-adjointness of certain extensions (Theorem~\ref{th.A_Th_self_adj}).
Section~\ref{3.3} comprises abstract theorems answering the question when resolvent differences
of different extensions are in some operator ideal.  The main results from Section~\ref{3.3} are generalised to
dissipative and accumulative extensions in Section~\ref{3.4}.

In Section~\ref{4.1} we construct a quasi boundary triple for elliptic operators on
bounded and exterior domains and construct self-adjoint realizations with non-local
boundary conditions.  Section~\ref{4.2} contains the results on spectral estimates for
resolvent differences of different self-adjoint realizations of the elliptic expression.
As a consequence we can also estimate differences of eigenvalues of these self-adjoint
realizations (Proposition~\ref{cor0}). Moreover, we prove a trace formula, which involves
the derivative of the Neumann-to-Dirichlet map and the operator that appears in the boundary condition.
Finally, in Section~\ref{4.3} we consider elliptic operators with $\delta$ and $\delta'$-potentials,
where we construct self-adjoint realizations and prove spectral estimates for resolvent
differences.

% *******************************************************************
\section{Preliminaries}
\label{sec2}
% *******************************************************************
\subsection{Notation and linear relations}\label{2.1}

Throughout this paper let $(\cH,(\cdot,\cdot))$ and $(\cG,(\cdot,\cdot))$ be
Hilbert spaces.
In general $\cH$ and $\cG$ are allowed to be non-separable, but in some theorems
separability is assumed.
The linear space of bounded linear operators defined on $\cH$ with values in
$\cG$ is denoted by $\cB(\cH,\cG)$. If $\cH=\cG$, we simply write $\cB(\cH)$.
We shall often deal with (closed) linear relations in $\cH$, that is, (closed)
linear subspaces of $\cH\times\cH$.
The set of closed linear relations in $\cH$ is denoted by $\widetilde\cC(\cH)$,
and for elements in a relation we usually use a vector notation.
Linear operators $T$ in $\cH$ are viewed as linear relations via their graphs.
The domain, range, kernel, multi-valued part and the inverse of a relation $T$
in $\cH$
are denoted by $\dom T$, $\ran T$, $\ker T$, $\mul T$ and $T^{-1}$,
respectively:
\begin{align*}
  \dom T \defequ \left\{f\in\cH\colon \exists\,f'\text{ with }\binom{f}{f'}\in
T\right\}, \\[1ex]
  \ran T \defequ \left\{f'\in\cH\colon \exists\,f\text{ with }\binom{f}{f'}\in
T\right\}, \displaybreak[0]\\[1ex]
  \ker T \defequ \left\{f\in\cH\colon \binom{f}{0}\in T\right\}, \\[1ex]
  \mul T \defequ \left\{f'\in\cH\colon \binom{0}{f'}\in T\right\}, \\[1ex]
  T^{-1} \defequ \left\{\binom{f'}{f}\colon \binom{f}{f'}\in T\right\}.
\end{align*}

Let $S\in\widetilde\cC(\cH)$ be a closed linear relation in $\cH$. The \emph{resolvent
set} $\rho(S)$ of $S$ is the set of all $\lambda\in\dC$ such that
$(S-\lambda)^{-1}\in\cB(\cH)$; the \emph{spectrum} $\sigma(S)$ of $S$
is the complement of $\rho(S)$ in $\dC$.
A point $\lambda\in\dC$ is an \emph{eigenvalue} of a linear relation $S$ if
$\ker(S-\lambda)\not=\{0\}$; we write $\lambda\in\sigma_p(S)$.
For a linear relation $S$ in $\cH$ the \emph{adjoint relation}
$S^*\in\widetilde\cC(\cH)$ is defined as
\[
  S^* \defeq \left\{\binom{g}{g'}\colon (f',g)=(f,g')  \text{ for all }
\binom{f}{f'}\in S\right\}.
\]
Note that this definition extends the usual definition
of the adjoint of a densely defined operator. A linear relation
$S$ in $\cH$ is said to be
\emph{symmetric} (\emph{self-adjoint}) if $S\subset S^*$ ($S=S^*$,
respectively). Recall that a symmetric relation is self-adjoint if
and only if $\ran(S-\lambda_\pm)=\cH$ holds for some $\lambda_+\in\CC^+$ and
some $\lambda_-\in\CC^-$, where $\CC^\pm\defeq\{z\in\CC\colon \pm\Im z > 0\}$;
in this case we have $\ran(S-\lambda)=\cH$ for all $\lambda\in\CC\backslash\RR$.

For a self-adjoint relation $S=S^*$ in $\cH$ the multi-valued part $\mul S$
is the orthogonal complement of $\dom S$ in $\cH$. Setting
$\cH_{\rm op} \defeq \overline{\dom S}$ and $\cH_\infty=\mul S$ one verifies
that $S$ can be written as the direct orthogonal sum of a (in general unbounded)
self-adjoint
operator $S_{\rm op}$ in the Hilbert space $\cH_{\rm op}$ and the ``pure''
relation $S_\infty=\bigl\{\binom{0}{f'}\colon f'\in\mul S\bigr\}$ in the Hilbert
space $\cH_\infty$,
\begin{equation*}
  S=S_{\rm op}\oplus S_\infty,
\end{equation*}
with respect to the decomposition $\cH=\cH_{\rm op}\oplus\cH_{\infty}$.
We say that a point $\lambda\in\dR$ belongs to the \emph{essential
spectrum} $\sess(S)$ of the self-adjoint relation $S$ if
$\lambda\in\sess(S_{\rm op})$. The essential spectrum of a closed operator $T$
in $\cH$ is
the set of $\lambda\in\dC$ such that $T-\lambda$ is not a Fredholm operator.

% *******************************************************************
\subsection{Operator ideals and singular values}
\label{2.2}

In this section let $\cH$ and $\cK$ be separable Hilbert spaces.  Denote by
$\sS_\infty(\cH,\cK)$ the closed subspace of compact operators in
$\cB(\cH,\cK)$; if $\cH=\cK$, we simply write $\sS_\infty(\cH)$.
We define classes of operator ideals along the lines of \cite{P87}.

% --------------------------------------------------------------------
\begin{definition} \label{def.class_op_ideal}
Suppose that for every pair of Hilbert spaces $\cH$, $\cK$ we are given a subset
$\sA(\cH,\cK)$ of $\sS_\infty(\cH,\cK)$.  The set
\[
  \sA \defeq \bigcup_{\cH,\cK \text{ Hilbert spaces}}\!\!\sA(\cH,\cK)
\]
is said to be a \emph{class of operator ideals} if the following conditions are satisfied:
\begin{itemize}
  \item[(i)]
    the rank-one operators $x\mapsto(x,u)v$ are in $\sA(\cH,\cK)$ for all $u\in\cH$, $v\in\cK$;
  \item[(ii)]
    $A+B\in\sA(\cH,\cK)$ for $A,B\in\sA(\cH,\cK)$;
  \item[(iii)]
    $CAB\in\sA(\cH_1,\cK_1)$ for $A\in\sA(\cH,\cK)$, $B\in\cB(\cH_1,\cH)$, $C\in\cB(\cK,\cK_1)$.
\end{itemize}
Moreover, we write $\sA(\cH)$ for $\sA(\cH,\cH)$.
\end{definition}

If $\sA$ is a class of operator ideals, then the  sets $\sA(\cH,\cK)$ are two-sided operator ideals
for every pair $\cH$, $\cK$;
for the latter notion see also, e.g.\ \cite{GK69,P80}.
For two classes of operator ideals $\sA$, $\sB$ we define the product
\[
  \sA\cdot\sB \defeq \bigl\{T\colon \text{there exist }A\in\sA,B\in\sB\text{ so that } T=AB\bigr\}
\]
and the adjoint of $\sA$ by
\[
  \sA^* \defeq \bigl\{A^*\colon A\in\sA\bigr\}.
\]
These sets are again classes of operator ideals; see \cite{P87}.
The elements in the product $\sA\cdot\sB$ are denoted by $(\sA\cdot\sB)(\cH,\cK)$, so that
\[
  \sA\cdot\sB= \bigcup_{\cH,\cK \text{ Hilbert spaces}}\!\!(\sA\cdot\sB)(\cH,\cK)
  = \bigcup_{\cH,\cK,\cG \text{ Hilbert spaces}}\!\!\sA(\cG,\cK)\cdot\sB(\cH,\cG),
\]
where the products $\sA(\cG,\cK)\cdot\sB(\cH,\cG)$ are defined by
\[
  \sA(\cG,\cK)\cdot\sB(\cH,\cG)
  =\bigl\{T\colon \text{there exist }A\in\sA(\cG,\cK),B\in\sB(\cH,\cG)\text{ so that } T=AB\bigr\}.
\]
Later also the notation $\sA^*(\cK,\cH):=\{A^*\colon A\in \sA(\cH,\cK)\}$ will be used. Observe that
the adjoint $\sA^*$ of $\sA$ can be written in the form
\[
\sA^*=  \bigcup_{\cH,\cK \text{ Hilbert spaces}}\!\!\sA^*(\cK,\cH).
\]

The next lemma is used to extend assertions about resolvent differences
from one $\lambda$ to a bigger set of $\lambda$.

% --------------------------------------------------------------------
\begin{lemma}\label{resdifflemma}
Let $\sA$ be a class of operator ideals.
Moreover, let $H$ and $K$ be closed linear relations in a separable Hilbert space $\cH$.
If
\begin{equation}\label{resdiffideal}
  (H-\lambda)^{-1}-(K-\lambda)^{-1} \in\sA(\cH)
\end{equation}
for some $\lambda\in\rho(H)\cap\rho(K)$,
then \eqref{resdiffideal} holds for all $\lambda\in\rho(H)\cap\rho(K)$.
\end{lemma}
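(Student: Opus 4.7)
My plan is to show that the resolvent difference at an arbitrary point $\mu\in\rho(H)\cap\rho(K)$ can be written as a product of two bounded operators sandwiching the resolvent difference at the distinguished point $\lambda$ for which the hypothesis holds, and then to invoke the two-sided ideal property stated in item~(iii) of Definition~\ref{def.class_op_ideal} to transfer membership in $\sA(\cH)$.

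As preparation I would note that, although $H$ and $K$ are closed linear relations rather than operators, the assumption $\lambda,\mu\in\rho(H)\cap\rho(K)$ means the four resolvents $(H-\lambda)^{-1}$, $(H-\mu)^{-1}$, $(K-\lambda)^{-1}$, $(K-\mu)^{-1}$ all belong to $\cB(\cH)$, so all subsequent calculations take place in the algebra of bounded operators. Moreover, the standard resolvent identity
\[
  (S-\mu)^{-1}-(S-\lambda)^{-1} = (\mu-\lambda)(S-\mu)^{-1}(S-\lambda)^{-1} = (\mu-\lambda)(S-\lambda)^{-1}(S-\mu)^{-1}
\]
remains valid for $S\in\{H,K\}$ even in the setting of closed linear relations; it is a direct consequence of the definitions.

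The central step is to verify the explicit sandwich formula
\[
  (H-\mu)^{-1}-(K-\mu)^{-1} = \bigl[I+(\mu-\lambda)(H-\mu)^{-1}\bigr]\bigl[(H-\lambda)^{-1}-(K-\lambda)^{-1}\bigr]\bigl[I+(\mu-\lambda)(K-\mu)^{-1}\bigr].
\]
This I would establish by direct multiplication: the resolvent identity for $H$ gives $\bigl[I+(\mu-\lambda)(H-\mu)^{-1}\bigr](H-\lambda)^{-1} = (H-\mu)^{-1}$, while the resolvent identity for $K$ gives $(K-\lambda)^{-1}\bigl[I+(\mu-\lambda)(K-\mu)^{-1}\bigr] = (K-\mu)^{-1}$, and the two cross terms of the form $(\mu-\lambda)(H-\mu)^{-1}(K-\mu)^{-1}$ arising from the expansion cancel with opposite sign.

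Once the sandwich formula is in hand, the conclusion is immediate: the outer brackets belong to $\cB(\cH)$, the middle factor belongs to $\sA(\cH)$ by hypothesis, so item~(iii) of Definition~\ref{def.class_op_ideal} yields $(H-\mu)^{-1}-(K-\mu)^{-1}\in\sA(\cH)$. I do not anticipate any serious obstacle; the only point requiring some care is to confirm that the resolvent identity and the associated algebraic manipulations genuinely carry over to closed linear relations rather than only to operators, which is by now standard.
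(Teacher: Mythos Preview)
Your proposal is correct and follows essentially the same route as the paper: the paper introduces $E=I+(\mu-\lambda)(H-\mu)^{-1}$ and $F=I+(\mu-\lambda)(K-\mu)^{-1}$, verifies $E(H-\lambda)^{-1}=(H-\mu)^{-1}$ and $(K-\lambda)^{-1}F=(K-\mu)^{-1}$ via the resolvent identity, and deduces the sandwich formula you wrote, concluding by the ideal property. Your additional remark that the cross terms cancel is exactly what makes the ``one easily computes'' step in the paper go through.
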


\begin{proof}
Let $\lambda,\mu\in\rho(H)\cap\rho(K)$ and define
\[
  E \defeq I+(\mu-\lambda)(H-\mu)^{-1},\qquad
  F \defeq I+(\mu-\lambda)(K-\mu)^{-1},
\]
which are both bounded operators in $\cH$.  The resolvent identity implies that
\begin{equation*}
  E(H-\lambda)^{-1} = (H-\mu)^{-1} \quad\text{and}\quad
  (K-\lambda)^{-1}F = (K-\mu)^{-1}.
\end{equation*}
Using this and the definition of $E$, $F$ one easily computes
\begin{equation*}
  (H-\mu)^{-1}-(K-\mu)^{-1}
  =E \,\bigl((H-\lambda)^{-1}-(K-\lambda)^{-1}\bigr)\,F.
\end{equation*}
Now the assertion follows from the ideal property of $\sA(\cH)$.
\end{proof}

Recall that the \emph{singular values} (or \emph{$s$-numbers}) $s_k(A)$,
$k=1,2,\dots$,
of a compact operator $A\in\sS_\infty(\cH,\cK)$ are defined as the eigenvalues
$\lambda_k(\vert A\vert)$ of the
non-negative compact operator $\vert
A\vert=(A^*A)^\frac{1}{2}\in\sS_\infty(\cH)$, which are
enumerated in decreasing order and with multiplicities taken into account. Note
that for a
non-negative operator $A\in\sS_\infty(\cH)$ the eigenvalues $\lambda_k(A)$ and
singular values
$s_k(A)$, $k=1,2,\dots$, coincide. Let $A\in\sS_\infty(\cH,\cK)$ and assume that
$\cH$ and $\cK$ are infinite dimensional Hilbert spaces.
Then there exist orthonormal systems
$\{\varphi_1,\varphi_2,\dots\}$ and $\{\psi_1,\psi_2,\dots\}$
in $\cH$ and $\cK$, respectively, such that $A$ admits the \emph{Schmidt expansion}
\begin{equation}\label{schmidt}
  A=\sum_{k=1}^{\infty}s_k(A)(\,\cdot\,,\varphi_k)\psi_k.
\end{equation}
It follows, for instance, from \eqref{schmidt} and the corresponding expansion for
$A^*\in\sS_\infty(\cK,\cH)$ that the singular values of $A$ and $A^*$ coincide:
$s_k(A)=s_k(A^*)$ for $k=1,2,\dots$; see, e.g.\ \cite[II.\S2.2]{GK69}.
Moreover, if $\cG$ and $\cL$ are separable Hilbert spaces, $B\in\cB(\cG,\cH)$
and $C\in\cB(\cK,\cL)$,
then the estimates
\begin{equation}\label{sest}
  s_k(AB)\leq\Vert B\Vert s_k(A)\quad\text{and}\quad s_k(CA)\leq\Vert C\Vert
  s_k(A),\qquad k=1,2,\dots,
\end{equation}
hold.  If, in addition, $B\in\sS_\infty(\cG,\cH)$ we have
\begin{equation}\label{smn}
  s_{m+n-1}(AB)\leq s_m(A)s_n(B),\qquad m,n=1,2\dots.
\end{equation}
The proofs of the inequalities \eqref{sest} and \eqref{smn} are the same as in
\cite[II.\S2.1 and \S2.2]{GK69}
where these facts are shown for operators acting in the same space.

Recall that the \emph{Schatten--von Neumann ideals} $\sS_p(\cH,\cK)$ are defined
by
\[
  \frS_p(\cH,\cK) \defeq \biggl\{A\in\sS_\infty(\cH,\cK)\colon\sum_{k=1}^\infty
  (s_k(A))^p < \infty\biggr\},\qquad p>0.
\]
Besides the Schatten--von Neumann ideals also
the operator ideals
\begin{equation*}
  \begin{aligned}
    \sS_{r,\infty}(\cH,\cK) \defequ \bigl\{A\in\sS_\infty(\cH,\cK)\colon s_k(A)
  = O(k^{-r}),\,k\to\infty\bigr\},\\[1ex]
    \sS_{r,\infty}^{(0)}(\cH,\cK) \defequ \bigl\{A\in\sS_\infty(\cH,\cK)\colon
    s_k(A) = o(k^{-r}),\,k\to\infty\bigr\},
  \end{aligned}
  \qquad r>0,
\end{equation*}
will play an important role later on.
The sets
\[
  \sS_p \defeq \bigcup_{\cH,\cK}\sS_p(\cH,\cK), \quad
  \sS_{r,\infty} \defeq \bigcup_{\cH,\cK}\sS_{r,\infty}(\cH,\cK), \quad
  \sS_{r,\infty}^{(0)} \defeq \bigcup_{\cH,\cK}\sS_{r,\infty}^{(0)}(\cH,\cK)
\]
are classes of operator ideals in the sense of Definition~\ref{def.class_op_ideal}.

We refer the reader to \cite[III.\S7 and III.\S14]{GK69} for a detailed study
of the classes $\sS_p$, $\sS_{r,\infty}$ and $\sS_{r,\infty}^{(0)}$.
We list only some basic and well-know properties, which will be useful for us.
It follows from $s_k(A)=s_k(A^*)$ that $\sS_p^*=\sS_p$, $\sS_{r,\infty}^*=\sS_{r,\infty}$
and $\bigl(\sS_{r,\infty}^{(0)}\bigr)^*=\sS_{r,\infty}^{(0)}$ hold.

\begin{lemma}\label{splemma}
Let $p,q,r,s>0$. Then the following relations are true:
\begin{itemize}
  \item[(i)]
    $\sS_p\subset\sS_{p^{-1},\infty}^{(0)}\subset\sS_{p^{-1},\infty}$;
  \item[(ii)]
    $\sS_{r,\infty}\subset\sS_q$ for all $q>r^{-1}$;
  \item[(iii)]
    $\sS_{r,\infty}\cdot\sS_{s,\infty}=\sS_{r+s,\infty}$;
  \item[(iv)]
    $\sS_{r,\infty}^{(0)}\cdot\sS_{s,\infty}^{(0)}=\sS_{r+s,\infty}^{(0)}$;
  \item[(v)]
    $\sS_p\cdot\sS_q = \sS_r$ if $\frac{1}{r}=\frac{1}{p}+\frac{1}{q}$.
\end{itemize}
\end{lemma}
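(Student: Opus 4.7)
The plan is to prove each part from three standard tools: monotonicity of the singular value sequence, the submultiplicative inequality \eqref{smn}, and the Schmidt expansion \eqref{schmidt}; full details are in \cite[III.\S7 and III.\S14]{GK69}, so I only outline the main steps and indicate where the genuine work lies.

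The easy parts are (i) and (ii). In (i), $\sS_{p^{-1},\infty}^{(0)}\subset\sS_{p^{-1},\infty}$ is trivial, and $\sS_p\subset\sS_{p^{-1},\infty}^{(0)}$ follows from the monotonicity bound
\[
  k\cdot s_{2k}(A)^p \,\leq\, \sum_{j=k+1}^{2k}s_j(A)^p,
\]
whose right-hand side tends to $0$ since $A\in\sS_p$; an analogous estimate for odd indices then yields $s_k(A)=o(k^{-1/p})$. For (ii), $s_k(A)\leq Ck^{-r}$ gives $\sum s_k(A)^q\leq C^q\sum k^{-rq}<\infty$ whenever $rq>1$.

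For the forward inclusions in (iii), (iv) and (v), applying \eqref{smn} with $m=n=k$ gives the uniform bound $s_{2k-1}(AB),\,s_{2k}(AB)\leq s_k(A)s_k(B)$. Multiplying decay rates immediately settles (iii) and (iv); for (v) I would raise this estimate to the $r$-th power, sum over $k$, and apply H\"older's inequality with the conjugate exponents $p/r$ and $q/r$, conjugacy following from $1/r=1/p+1/q$.

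The main obstacle is the reverse inclusions in (iii), (iv) and (v), for which my plan is to factor any $T$ in the larger ideal via its Schmidt expansion $T=\sum_k s_k(T)(\,\cdot\,,\varphi_k)\psi_k$. I would pick an auxiliary orthonormal sequence $\{\eta_k\}$ in $\cH$, split $s_k(T)=s_k(T)^\theta\cdot s_k(T)^{1-\theta}$ with $\theta=r/(r+s)$ in (iii) and (iv), and $\theta=r/p$ in (v), and set
\[
  Ax\defeq\sum_k s_k(T)^\theta (x,\eta_k)\psi_k, \qquad Bx\defeq\sum_k s_k(T)^{1-\theta}(x,\varphi_k)\eta_k.
\]
A direct computation then gives $AB=T$ with $s_k(A)=s_k(T)^\theta$ and $s_k(B)=s_k(T)^{1-\theta}$, which by the choice of $\theta$ places $A$ and $B$ in the required factor ideals. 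The only delicate point is to confirm that these Schmidt series define bounded operators, which is immediate since both coefficient sequences are null; a fuller argument is given in \cite[III.\S7 and \S14]{GK69}.
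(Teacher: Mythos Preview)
Your proposal is correct and follows essentially the same approach as the paper: both arguments use monotonicity of singular values for (i), the trivial series comparison for (ii), the inequality \eqref{smn} with $m=n$ for the forward inclusions in (iii)--(v), and a Schmidt-type factorization $s_k(T)=s_k(T)^\theta\cdot s_k(T)^{1-\theta}$ for the reverse inclusions. The only cosmetic differences are that the paper takes $\eta_k=\varphi_k$ rather than an auxiliary orthonormal system, and for the forward inclusion in (v) it simply cites \cite[III.\S7.2]{GK69} instead of writing out the H\"older step you sketch.
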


\begin{proof}
The first inclusion in (i) is a consequence of the fact that $\sum (s_k(A))^p
<\infty$ implies $k(s_k(A))^p\rightarrow 0$ for
$k\rightarrow\infty$, and the second inclusion is clear. Assertion (ii) follows
immediately from the definitions. In order to verify (iii) let $r,s>0$ and let
$A\in\sS_{r,\infty}(\cH,\cK)$ and $B\in\sS_{s,\infty}(\cG,\cH)$, that is, the
inequalities $s_n(A)\leq c_a n^{-r}$ and $s_n(B)\leq c_b n^{-s}$, $n\in\dN$,
hold with some constants $c_a,c_b>0$.
From \eqref{smn} we obtain
\begin{equation*}
  s_{2n}(AB)\leq s_{2n-1}(AB)\leq s_n(A) s_n(B)\leq \frac{c_ac_b}{n^r n^s}
  \leq \frac{2^{r+s}c_ac_b}{(2n)^{r+s}}
  \leq \frac{2^{r+s}c_ac_b}{(2n-1)^{r+s}}\,,
\end{equation*}
which implies $AB\in\sS_{r+s,\infty}(\cG,\cK)$.
In order to show equality, let $A\in\sS_{r+s,\infty}(\cH,\cK)$ with
Schmidt expansion
\[
  A=\sum_{k} s_k(A)(\,\cdot\,,\varphi_k)\psi_k.
\]
Define operators $B\colon\cH\to\cK$ and $C\colon\cH\to\cH$ by
\[
  B=\sum_{k} \bigl(s_k(A)\bigr)^{\frac{r}{r+s}}(\,\cdot\,,\varphi_k)\psi_k, \qquad
  C=\sum_{k} \bigl(s_k(A)\bigr)^{\frac{s}{r+s}}(\,\cdot\,,\varphi_k)\varphi_k.
\]
The relations $A=BC$, $B\in\sS_{r,\infty}(\cH,\cK)$, $C\in\sS_{s,\infty}(\cH,\cH)$
show that $A\in\sS_{r,\infty}\cdot\sS_{s,\infty}$.
The same arguments as in (iii) can be used to show (iv).
The inclusion ``$\subset$'' in (v) follows from \cite[III.\S7.2]{GK69}.
The converse inclusion follows in a similar way as in (iii).
\end{proof}

Sometimes we need also the notion of a symmetrically normed ideal: a two-sided
ideal $\sA(\cH,\cG)$ is a \emph{symmetrically normed ideal} if it is a Banach
space with respect to some norm $\|\cdot\|_{\sA}$ such that $\|CAB\|_\sA \le
\|C\|\,\|A\|_\sA\,\|B\|$ for $A\in\sA(\cH,\cG)$, $B\in\cB(\cH)$, $C\in\cB(\cG)$
and $\|A\|_\sA=s_1(A)$ for rank one operators $A$; see \cite[III.\S2.1,\S2.2]{GK69}.
If a class of operator ideals consists of symmetrically normed ideals, then
we call it a \emph{class of symmetrically normed ideals}.
The classes $\sS_p$, $\sS_{r,\infty}$ and $\sS_{r,\infty}^{(0)}$ are
classes of symmetrically normed ideals for $p\ge 1$ and $r<1$;
see \cite[III.\S7 and \S14]{GK69}.

The following lemma is needed in the proof of Proposition~\ref{pr.suff_cond}.

% --------------------------------------------------------------------
\begin{lemma}\label{ideallem}
Let $\frA(\cG)$ be a symmetrically normed ideal of $\cB(\cG)$, let
$C\in\cB(\cH)$ and assume
that $A\in\frA(\cG)$ admits the factorization $A=B^*B$ with $B\in\cB(\cG,\cH)$.
Then
also $B^*CB\in\frA(\cG)$.
\end{lemma}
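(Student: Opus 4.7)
The plan is to reduce the problem to non-negative $C$ and then compare $B^*PB$ with $A=B^*B$ at the level of singular values.

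First, I decompose $C$ into at most four non-negative bounded operators: writing $C = \tfrac12(C+C^*) + \tfrac{1}{2i}(C-C^*)$ and applying the spectral theorem to each self-adjoint part to split off its positive and negative parts, one obtains
\[
C = C_1 - C_2 + i(C_3 - C_4),
\]
with $C_j \in \cB(\cH)$, $C_j \ge 0$ and $\|C_j\| \le \|C\|$. Since $\frA(\cG)$ is a linear subspace of $\cB(\cG)$, it is enough to prove $B^*PB \in \frA(\cG)$ for an arbitrary non-negative $P \in \cB(\cH)$.

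Secondly, I note that $B$ is compact: as $A = B^*B \in \frA(\cG) \subset \sS_\infty(\cG)$, its non-negative square root $|B| = A^{1/2}$ is compact, whence the polar decomposition $B = V|B|$ shows that $B$ is compact as well. For a non-negative $P$, the factorization $B^*PB = (P^{1/2}B)^*(P^{1/2}B)$ shows $B^*PB \ge 0$, and the bound $P \le \|P\|\,I_\cH$ gives the operator inequality
\[
0 \le B^*PB \le \|P\|\, B^*B = \|P\|\, A.
\]
Since both sides are non-negative compact operators, the min-max principle yields the singular-value domination $s_k(B^*PB) \le \|P\|\, s_k(A)$ for every $k\ge 1$.

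The final step is to deduce $B^*PB \in \frA(\cG)$ from this domination together with $A \in \frA(\cG)$. Writing the Schmidt expansions of $X \defeq B^*PB$ and of $A$, one can construct partial isometries $U,V \in \cB(\cG)$ of norm at most one and a diagonal contraction $D \in \cB(\cG)$ of norm at most $\|P\|$, whose entries are the ratios $s_k(X)/s_k(A)$ (set to zero when $s_k(A)=0$, which by the domination forces $s_k(X)=0$ as well), such that $X = U A D V$. The symmetrically-normed-ideal axiom $\|C_1 A C_2\|_\sA \le \|C_1\|\,\|A\|_\sA\,\|C_2\|$ applied with $C_1 = U$ and $C_2 = DV$ then forces $X \in \frA(\cG)$, and combining over the four pieces of $C$ by linearity finishes the proof.

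The main obstacle I expect is the bookkeeping in the factorization $X = UADV$: one has to line up the Schmidt bases of $X$ and $A$, handle carefully the indices where $s_k(A)=0$, and verify that $D$ is a genuine bounded operator of the claimed norm. Once this is in place, the ideal axiom supplies the desired conclusion directly.
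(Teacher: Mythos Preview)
Your argument is correct and takes a genuinely different route from the paper's proof.

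The paper does \emph{not} decompose $C$ into non-negative pieces. Instead it applies the multiplicative inequality $s_{2n-1}(B^*CB)\le s_n(B^*)s_n(CB)\le\|C\|\,s_n(A)$ directly, which only gives domination of $s_{2n-1}$ and $s_{2n}$ by $\|C\|\,s_n(A)$. To convert this ``every-other-index'' bound into a genuine $s_k$-domination, the paper constructs an auxiliary operator $\wt A=V_1AV_1^*+V_2AV_2^*\in\frA(\cG)$ whose singular values are those of $A$ each repeated twice, so that $s_k(B^*CB)\le\|C\|\,s_k(\wt A)$ for all $k$, and then cites \cite[III.\S2.2]{GK69} for the conclusion. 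Your approach trades this doubling trick for the decomposition $C=C_1-C_2+i(C_3-C_4)$ with $C_j\ge0$: the operator inequality $0\le B^*PB\le\|P\|A$ then gives $s_k(B^*PB)\le\|P\|\,s_k(A)$ at \emph{every} index $k$ immediately, and your explicit factorization $X=UADV$ is precisely a proof of the fact the paper cites from \cite{GK69}. Your route is arguably more elementary (it avoids both the Ky Fan--type product inequality \eqref{smn} and the doubling construction) at the cost of handling four pieces instead of one; the paper's route treats $C$ in a single stroke but needs the extra machinery. Both are clean, and the bookkeeping you flag in the $UADV$ factorization is routine once one observes that $s_k(A)=0$ forces $s_k(X)=0$.
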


\begin{proof}
If $\cG$ is finite-dimensional, then the assertion is trivial.
So let us assume that $\cG$ is infinite-dimensional.
Observe first that $(s_k(A))^\frac{1}{2}=s_k(B)=s_k(B^*)$ and
$\lambda_k(A)=s_k(A)$
hold for all $k=1,2,\dots$.
Together with \eqref{smn} and the first inequality in \eqref{sest} we obtain
\begin{equation*}
s_{2n}\bigl(B^*CB\bigr)\leq
s_{2n-1}\bigl(B^*CB\bigr)\leq s_n(B^*)
s_n(CB)\leq\Vert C\Vert s_n(A)
\end{equation*}
for $n=1,2,\dots$.
Let us write the non-negative compact operator $A\in\frA(\cG)$ in the form
\begin{equation*}
A=\sum_{k=1}^{\infty} \lambda_k(A)(\cdot\,,\varphi_k)\varphi_k
\end{equation*}
with an orthonormal bases $\{\varphi_1,\varphi_2,\dots\}$
of eigenvectors corresponding to the eigenvalues $\lambda_k(A)$.

Define operators $V_1, V_2 \in \cB(\cG)$ by
\[
  V_1: \left\{ \begin{aligned}
    \varphi_{2k-1} &\mapsto \varphi_k, \\[1ex]
    \varphi_{2k} &\mapsto 0,
  \end{aligned} \right. \hspace*{7ex}
  V_2: \left\{ \begin{aligned}
    \varphi_{2k-1} &\mapsto 0, \\[1ex]
    \varphi_{2k} &\mapsto \varphi_k,
  \end{aligned} \right. \hspace*{5ex} k\in\NN.
\]
Then the non-negative operator
\[
  \widetilde A \defeq V_1AV_1^* + V_2AV_2^*
  = \sum_{k=1}^{\infty} \lambda_k(A)
\bigl((\cdot\,,\varphi_{2k-1})\varphi_{2k-1}+(\cdot\,,\varphi_{2k})\varphi_{2k}
\bigr)
\]
belongs to $\frA(\cG)$, and its eigenvalues satisfy
$\lambda_{2n-1}(\widetilde A) = \lambda_{2n}(\widetilde A) = \lambda_n(A)$.
Hence we have $s_k(B^*CB)\leq \|C\| s_k(\widetilde A)$, $k=1,2,\dots,$
and the claim follows from \cite[III.\S2.2]{GK69}.
\end{proof}

% *******************************************************************
\section{Quasi boundary triples and Krein's formula}
\label{sec3}
% *******************************************************************
\subsection{Quasi boundary triples, $\gamma$-fields and Weyl
functions}\label{3.1}

The notion of quasi boundary triples was introduced in connection with elliptic
boundary value problems
by the first two authors in \cite{BL07} as a generalization of the
notion of ordinary and generalized boundary triples from
\cite{Br76,BGP08,DM91,DM95,GG91,Ko75,M92}.
We emphasize that a
quasi boundary triple is in general not a boundary relation in the sense of
\cite{DHMS06}.
Let us start by recalling the basic definition from \cite{BL07}.

\begin{definition}
\label{def:qbt}
Let $A$ be a closed symmetric relation in a Hilbert space $(\cH,(\cdot,\cdot))$.
We say that $\{\cG,\Gamma_0,\Gamma_1\}$ is a \emph{quasi boundary triple}
for $A^*$ if $\Gamma_0$ and $\Gamma_1$ are linear mappings defined on a dense
subspace
$T$ of $A^*$ with values in the Hilbert space $(\cG,(\cdot,\cdot))$ such that
$\Gamma \defeq \binom{\Gamma_0}{\Gamma_1}\colon
T\rightarrow\cG\times\cG$ has dense range, $\ker\Gamma_0$ is self-adjoint and
the identity
\begin{equation}\label{green1}
  (f',g)-(f,g')
  =(\Gamma_1\hat f,\Gamma_0\hat g)-(\Gamma_0\hat f,\Gamma_1\hat g)
\end{equation}
holds for all $\hat f=\binom{f}{f'},\,\hat g=\binom{g}{g'}\in T$.
\end{definition}

We recall some basic facts for quasi boundary triples, which can be found
in \cite{BL07}.
Let $A$ be a closed symmetric relation in the Hilbert space $\cH$.
We note first that a quasi boundary triple for $A^*$ exists if and only if
the deficiency indices $n_\pm(A)=\dim\ker(A^*\mp i)$ of $A$ coincide.
In the following, let
$\{\cG,\Gamma_0,\Gamma_1\}$ be a quasi boundary triple for $A^*$.
Then $A$ coincides with $\ker\Gamma=\ker\Gamma_0\cap\ker\Gamma_1$
and $\Gamma=\binom{\Gamma_0}{\Gamma_1}$ regarded as a mapping from
$\cH\times\cH$ into $\cG\times\cG$ is closable, cf.\
\cite[Proposition~2.2]{BL07}. Furthermore, as an immediate consequence of \eqref{green1},
the extension $A_1:=\ker\Gamma_1$ is a symmetric relation in $\cH$.

The next theorem (cf.\ \cite[Theorem~2.3]{BL07}) contains a sufficient
condition for a
triple $\{\cG,\Gamma_0,\Gamma_1\}$ to be a quasi boundary triple.  One does not
have to
show that $T$ is dense in $A^*$, but this follows from the theorem.
Moreover, one only has to show that $\ker\Gamma_0$ \emph{contains} a
self-adjoint relation.

% -------------------------------------------------------------------
\begin{theorem}\label{suff_cond_qbt}
Let $\cH$ and $\cG$ be Hilbert spaces and let $T$ be a linear relation in $\cH$.
Assume that
$\Gamma_0,\Gamma_1\colon T\rightarrow\cG$ are linear mappings such that the
following
conditions are satisfied:
\begin{itemize}
\setlength{\itemsep}{1.2ex}
\item[(a)]
$\ker\Gamma_0$ contains a self-adjoint relation;
\item[(b)]
$\Gamma\defeq \binom{\Gamma_0}{\Gamma_1}\colon T\rightarrow\cG\times\cG$
has dense range;
\item[(c)]
identity \eqref{green1} holds for all $\hat f=\binom{f}{f'},\,\hat
g=\binom{g}{g'}\in T$.
\end{itemize}
Then the following assertions hold.
\begin{itemize}
\item[(i)]
$A\defeq\ker\Gamma$ is a closed symmetric relation in $\cH$ and
$\{\cG,\Gamma_0,\Gamma_1\}$
is a quasi boundary triple for $A^*$;
\item[(ii)]
$T=A^*$ if and only if $\ran\Gamma=\cG\times\cG$.
\end{itemize}
\end{theorem}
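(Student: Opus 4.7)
The goal is to establish (i) that $A=\ker\Gamma$ is a closed symmetric relation and $\{\cG,\Gamma_0,\Gamma_1\}$ is a quasi boundary triple for $A^*$, and (ii) the equivalence $T=A^*\iff\ran\Gamma=\cG\times\cG$. The strategy is to exploit Green's identity~(c) together with the density in~(b), working inside the Hilbert space $\cH\times\cH$ so that orthogonality arguments become available.

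First I would identify $\ker\Gamma_0$ with the self-adjoint relation $\wt A_0$ from~(a): for $\hat f\in\ker\Gamma_0$ and $\hat g\in\wt A_0$ both $\Gamma_0$-terms in~(c) vanish, forcing $\hat f\in\wt A_0^*=\wt A_0$, so $\ker\Gamma_0=\wt A_0$ is itself self-adjoint. Symmetry of $A$ and the inclusion $T\subset A^*$ then follow immediately by applying~(c) with the appropriate elements. For closedness of $A$, given $\hat f_n\in A$ converging to $\hat f$ in $\cH\times\cH$, the inclusion $A\subset\wt A_0$ and closedness of $\wt A_0$ yield $\hat f\in\wt A_0\subset T$ and $\Gamma_0\hat f=0$. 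Passing to the limit in~(c) applied to $\hat f_n\in\ker\Gamma$ against arbitrary $\hat g\in T$, and using~(c) once more for the limiting $\hat f$, gives $(\Gamma_1\hat f,\Gamma_0\hat g)=0$; density of $\ran\Gamma_0\subset\cG$ (a consequence of~(b)) then forces $\Gamma_1\hat f=0$, so $\hat f\in A$.

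The heart of~(i) is the density $\overline T=A^*$, which I would prove by contradiction in $\cH\times\cH$. Suppose $\hat h=\binom{h}{h'}\in A^*\cap(\overline T)^\perp$. Orthogonality against $\wt A_0\subset T$, compared with the definition of the adjoint relation, yields $\binom{h'}{-h}\in\wt A_0^*=\wt A_0\subset T$. Applying~(c) to $\binom{h'}{-h}$ and any $\hat g\in T$, and combining with the orthogonality $\hat h\perp\hat g$ in $\cH\times\cH$, forces $(\Gamma_1\binom{h'}{-h},\Gamma_0\hat g)=0$; density of $\ran\Gamma_0$ then gives $\Gamma_1\binom{h'}{-h}=0$, so $\binom{h'}{-h}\in\ker\Gamma=A$. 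Evaluating $\hat h\in A^*$ on this element yields $-\|h\|^2=\|h'\|^2$, forcing $\hat h=0$.

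For~(ii) I would first introduce the closure $\bar\Gamma$ of $\Gamma$: closability follows from~(c) applied to a sequence $\hat h_n\to 0$ with $\Gamma\hat h_n\to(\phi_0,\phi_1)$ together with the density in~(b), and by~(i) we have $\dom\bar\Gamma=\overline T=A^*$. Since $T\supset\wt A_0$, one obtains $T^*\subset\wt A_0^*=\wt A_0\subset T$, from which $\ker\bar\Gamma=A$ follows. The $(\Leftarrow)$ direction is then immediate: given $\hat h\in A^*$, surjectivity of $\Gamma$ produces $\hat f\in T$ with $\Gamma\hat f=\bar\Gamma\hat h$, so $\hat h-\hat f\in\ker\bar\Gamma=A\subset T$ and hence $\hat h\in T$. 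The main obstacle is the $(\Rightarrow)$ direction, where dense range must be upgraded to full range under the hypothesis $T=A^*$; my plan here is to use the direct decomposition $T=\wt A_0+\{\binom{\phi}{\lambda\phi}:\phi\in\ker(T-\lambda)\}$ for $\lambda\in\rho(\wt A_0)\setminus\RR$, to write $\ran\Gamma$ affinely in terms of the resulting Weyl function $M(\lambda)$, and to argue that when $T=A^*$ the $\gamma$-field identifies $\ran\Gamma_0$ with $\ker(A^*-\lambda)$ in a way that forces $\ran\Gamma_0=\cG$, and similarly forces $\Gamma_1(\wt A_0)=\cG$, giving $\ran\Gamma=\cG\times\cG$.
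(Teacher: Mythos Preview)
The paper does not give its own proof of this theorem; it is quoted from \cite[Theorem~2.3]{BL07}. So there is no in-paper argument to compare against, and the question becomes whether your outline is correct.

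Your treatment of (i) is fine: the identification $\ker\Gamma_0=\wt A_0$, symmetry of $A$, the inclusion $T\subset A^*$, closedness of $A$, and the density argument $A^*\cap(\overline T)^{\perp}=\{0\}$ are all carried out correctly from Green's identity and the density of $\ran\Gamma$. One small point: your justification for $\ker\bar\Gamma=A$ should read ``$\hat h\in\ker\bar\Gamma$ implies $\hat h\in T^*=(\overline T)^*=A^{**}=A$'', not merely $T^*\subset T$.

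Part (ii), however, has a genuine gap in \emph{both} directions. For $(\Leftarrow)$ you assert ``by (i) we have $\dom\bar\Gamma=\overline T=A^*$'', but (i) only gives $\overline T=A^*$; it says nothing about $\dom\bar\Gamma$. In fact the opposite happens: once $\ran\Gamma=\cG\times\cG$, for any $\hat h\in\dom\bar\Gamma$ you can pick $\hat f\in T$ with $\Gamma\hat f=\bar\Gamma\hat h$, whence $\hat h-\hat f\in\ker\bar\Gamma=A\subset T$ and $\hat h\in T$. Thus $\dom\bar\Gamma=T$, so $\Gamma$ is already closed, and your argument ``take $\hat h\in A^*$ and look at $\bar\Gamma\hat h$'' is circular. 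A correct proof of $(\Leftarrow)$ needs an additional idea (for instance, showing that the induced bijection $T/A\to\cG\times\cG$ is bicontinuous, or that $\cN_\lambda(T)$ is closed); this is not supplied.

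For $(\Rightarrow)$ you give only a plan, and the key step is missing. You note that $\gamma(\lambda)$ identifies $\ran\Gamma_0=\cG_0$ bijectively with $\ker(A^*-\lambda)$ when $T=A^*$, and then say this ``forces $\ran\Gamma_0=\cG$''. But a bounded bijection from a dense subspace $\cG_0\subset\cG$ onto a closed subspace of $\cH$ does not by itself force $\cG_0=\cG$; one needs $\gamma(\lambda)$ to be bounded below, equivalently $\Im M(\lambda)=\gamma(\lambda)^*\gamma(\lambda)$ to be bounded below, and nothing in your outline establishes this. The same issue arises for $\Gamma_1(\wt A_0)=\cG$. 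So the $(\Rightarrow)$ argument, as written, does not close.
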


Let again $A$ be a closed symmetric relation in $\cH$ and let
$\{\cG,\Gamma_0,\Gamma_1\}$ be a quasi boundary triple for $A^*$ with $T=\dom\Gamma$.
Next we consider extensions of $A$ which are restrictions of $T$ defined by
some abstract boundary condition.
For a linear relation $\Theta\subset\cG\times\cG$ we define
\begin{equation}\label{atheta}
  A_\Theta \defeq \bigl\{\hat f\in T\colon \Gamma\hat
f\in\Theta\bigr\}=\Gamma^{-1}(\Theta).
\end{equation}
If $\Theta\subset\cG\times\cG$ is an operator, then we have
\begin{equation}\label{athetaop}
  A_\Theta=\ker(\Gamma_1-\Theta\Gamma_0),
\end{equation}
and \eqref{athetaop} holds also for linear relations $\Theta$ in $\cG$ if the
product and the sum on the right-hand side are understood in the sense of linear relations.
Observe that the self-adjoint relation $A_0 \defeq \ker\Gamma_0$ corresponds to
the purely multi-valued relation $\Theta=0^{-1}=\bigl\{\binom{0}{g}\colon g\in\cG\bigr\}$ in $\cG$.
This little inconsistency in the notation should not lead to misunderstandings.
It is not difficult to see that $\Theta\subset\Theta^*$ implies $A_\Theta\subset
A_\Theta^*$.
However, in contrast to ordinary boundary triples, self-adjointness of $\Theta$
does not imply self-adjointness or essential self-adjointness of $A_\Theta$;
cf.\ \cite[Proposition~4.11]{BL07} for a counterexample,
\cite[Proposition~2.4]{BL07}
and Theorem~\ref{th.A_Th_self_adj} below for sufficient conditions.

In the following we set $\cG_0 \defeq \ran\Gamma_0$ and $\cG_1 \defeq
\ran\Gamma_1$.
Because $\ran\Gamma$ is dense in $\cG\times\cG$, it follows that $\cG_0$ and
$\cG_1$
are dense subspaces of $\cG$.
Since $A_0 \defeq \ker\Gamma_0\subset T=\dom\Gamma$ is a self-adjoint extension
of $A$ in $\cH$, the decomposition
\begin{equation*}
  T=A_0\,\widehat +\, \hat\cN_{\lambda,T},\qquad \hat\cN_{\lambda,T} \defeq
  \left\{\binom{f_\lambda}{\lambda f_{\lambda}}
  \colon f_\lambda\in\cN_\lambda(T) \defeq \ker(T-\lambda)\right\},
\end{equation*}
holds for all $\lambda\in\rho(A_0)$. Here $\widehat +$ denotes the direct sum of the subspaces $A_0$ and $\hat\cN_{\lambda,T}$.
It follows that the mapping
\begin{equation*}
  \bigl(\Gamma_0\upharpoonright \hat\cN_{\lambda,T}\bigr)^{-1}\colon
  \cG_0\rightarrow\hat\cN_{\lambda,T},\quad\lambda\in\rho(A_0),
\end{equation*}
is well defined and bijective.
Denote the orthogonal projection in $\cH\oplus\cH$ onto the first
component of $\cH\oplus\cH$ by $\pi_1$.

\begin{definition}
\label{def:gammaWeyl}
Let $A$ be a closed symmetric relation in $\cH$ and let
$\{\cG,\Gamma_0,\Gamma_1\}$ be a quasi boundary triple for $A^*$
with $A_0=\ker\Gamma_0$.
Then the (operator-valued) functions $\gamma$ and $M$ defined by
\begin{equation*}
  \gamma(\lambda) \defeq \pi_1\bigl(\Gamma_0\upharpoonright
\hat\cN_{\lambda,T}\bigr)^{-1}
  \,\,\,\text{and}\,\,\,
  M(\lambda) \defeq \Gamma_1\bigl(\Gamma_0\upharpoonright
\hat\cN_{\lambda,T}\bigr)^{-1},\quad
  \lambda\in\rho(A_0),
\end{equation*}
are called the $\gamma$\emph{-field} and \emph{Weyl function} corresponding to
the quasi boundary triple $\{\cG,\Gamma_0,\Gamma_1\}$.
\end{definition}

Note that $\gamma(\lambda)$ is a mapping from $\cG_0$ to $\cH$, and $M(\lambda)$
is a mapping from $\cG_0$ to $\cG_1\subset\cG$ for $\lambda\in\rho(A_0)$.
These definitions coincide with the definition of the $\gamma$-field and
Weyl function or Weyl family in the case that $\{\cG,\Gamma_0,\Gamma_1\}$ is an ordinary
boundary triple, generalized boundary triple or a boundary relation as
in \cite{DHMS06,DHMS09,DM91,DM95}.
In the next proposition we collect some properties of the $\gamma$-field
and the Weyl function of a quasi boundary triple, which are extensions of
well-known properties of the $\gamma$-field and Weyl function of an
ordinary boundary triple.
The first six items were stated and proved in \cite[Proposition~2.6]{BL07}.

% --------------------------------------------------------------------
\begin{proposition} \label{gammaprop}
Let $A$ be a closed symmetric relation in $\cH$ and let
$\{\cG,\Gamma_0,\Gamma_1\}$ be a quasi boundary triple for $A^*$ with
$\gamma$-field $\gamma$ and Weyl function $M$. For
$\lambda,\mu\in\rho(A_0)$ the following assertions hold.
\begin{itemize}
\item[(i)] % -----
$\gamma(\lambda)$ is a densely defined bounded
operator from $\cG$ into $\cH$ with domain $\dom\gamma(\lambda)=\cG_0$,
$\overline{\gamma(\lambda)}\in\cB(\cG,\cH)$,
the function $\lambda\mapsto\gamma(\lambda)g$ is holomorphic on $\rho(A_0)$
for every $g\in\cG_0$, and the relation
\begin{equation*}
  \gamma(\lambda)=\bigl(I+(\lambda-\mu)(A_0-\lambda)^{-1}\bigr)\gamma(\mu)
\end{equation*}
holds.
\item[(ii)] % -----
$\gamma(\overline\lambda)^*\in\cB(\cH,\cG)$,
$\ran\gamma(\overline\lambda)^*\subset\cG_1$
and for all $h\in\cH$
we have
\begin{equation*}
  \gamma(\overline\lambda)^* h=\Gamma_1
  \begin{pmatrix} (A_0-\lambda)^{-1}h\\[0.5ex]
  (I+\lambda(A_0-\lambda)^{-1})h
  \end{pmatrix}.
\end{equation*}
\item[(iii)] % -----
$M(\lambda)$ maps $\cG_0$ into $\cG_1$. If, in
addition, $A_1 \defeq \ker\Gamma_1\subset T$ is a self-adjoint relation in $\cH$
and $\lambda\in\rho(A_1)$, then $M(\lambda)$ maps $\cG_0$
onto $\cG_1$.
\item[(iv)] % -----
$M(\lambda)\Gamma_0\hat f_{\lambda}=\Gamma_1\hat
f_\lambda$ for all $\hat f_\lambda\in \hat\cN_{\lambda,T}$.
\item[(v)] % -----
$M(\lambda)\subset M(\overline\lambda)^*$ and
$M(\lambda)-M(\mu)^*=(\lambda-\overline\mu)\gamma(\mu)^*\gamma(\lambda)$. The
function $\lambda\mapsto M(\lambda)$ is holomorphic in the sense that
it can be written as the sum of the possibly unbounded operator $\Real
M(\mu)$ and a bounded holomorphic operator function,
\begin{equation*}
\begin{split}
  M(\lambda)=&\Real M(\mu) \\
  & \quad+\gamma(\mu)^*\bigl((\lambda-\Real\mu)
  +(\lambda-\mu)(\lambda-\overline\mu)(A_0-\lambda)^{-1}
  \bigr)\gamma(\mu).
\end{split}
\end{equation*}
\item[(vi)] % -----
$\Imag
M(\lambda)=\tfrac{1}{2i}(M(\lambda)-M(\overline\lambda))$ is a densely
defined bounded operator in $\cG$. For $\lambda\in\dC^+ (\dC^-)$ the
operator $\Imag M(\lambda)$ is positive (negative, respectively).
\item[(vii)] % -----
For $x\in\cG_0$, the function $\lambda\mapsto M(\lambda)x$ is differentiable on
$\rho(A_0)$ and
\begin{equation} \label{deriv_M}
  \frac{d}{d\lambda}M(\lambda)x = \gamma(\ov\lambda)^*\gamma(\lambda)x, \qquad
\lambda\in\rho(A_0).
\end{equation}
\end{itemize}
\end{proposition}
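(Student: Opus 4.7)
The formula will follow almost immediately from item (v) of the proposition, so the plan is simply to set up the correct substitution and pass to the limit. Fix $\lambda_0\in\rho(A_0)$ and $x\in\cG_0$. Since $A_0$ is self-adjoint, $\bar\lambda_0\in\rho(A_0)$ as well, so item (v) applied with $\mu=\bar\lambda_0$ yields the identity
\[
  M(\lambda) - M(\bar\lambda_0)^* = (\lambda - \lambda_0)\,\gamma(\bar\lambda_0)^*\gamma(\lambda)
\]
on the common domain. From $M(\lambda_0)\subset M(\bar\lambda_0)^*$ (which is the first part of (v) evaluated at $\lambda=\lambda_0$), the element $x\in\cG_0=\dom M(\lambda_0)$ lies in $\dom M(\bar\lambda_0)^*$ and $M(\bar\lambda_0)^*x = M(\lambda_0)x$. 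Hence, applying the identity to $x$,
\[
  M(\lambda)x - M(\lambda_0)x = (\lambda - \lambda_0)\,\gamma(\bar\lambda_0)^*\gamma(\lambda)x.
\]

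For $\lambda$ in a punctured neighbourhood of $\lambda_0$ (which may be assumed to lie in $\rho(A_0)$), divide both sides by $\lambda-\lambda_0$ and let $\lambda\to\lambda_0$. By item (i) the map $\lambda\mapsto\gamma(\lambda)x$ is holomorphic on $\rho(A_0)$, in particular continuous at $\lambda_0$; and by item (ii) the operator $\gamma(\bar\lambda_0)^*$ is bounded from $\cH$ into $\cG$. Therefore
\[
  \lim_{\lambda\to\lambda_0}\frac{M(\lambda)x - M(\lambda_0)x}{\lambda - \lambda_0}
  = \lim_{\lambda\to\lambda_0}\gamma(\bar\lambda_0)^*\gamma(\lambda)x
  = \gamma(\bar\lambda_0)^*\gamma(\lambda_0)x,
\]
which is the desired formula \eqref{deriv_M} at the point $\lambda_0$.

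\textbf{Where the work lies.} There is essentially no analytical obstacle — the only care needed is the domain bookkeeping around the possibly unbounded adjoint $M(\bar\lambda_0)^*$, ensuring that the identity from (v) is evaluated on a vector $x$ that lies in the domain of both sides and that $M(\bar\lambda_0)^*x$ coincides with $M(\lambda_0)x$ there. Once this is settled, everything reduces to continuity of $\gamma(\,\cdot\,)x$ from (i) combined with boundedness of $\gamma(\bar\lambda_0)^*$ from (ii). No surjectivity of $\Gamma_0$ or of $M(\lambda)$ is needed; in particular the argument does not require the additional hypothesis of (iii) that $A_1$ be self-adjoint.
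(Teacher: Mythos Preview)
Your proof is correct and follows essentially the same route as the paper: apply item (v) with $\mu=\bar\lambda_0$, use $M(\lambda_0)\subset M(\bar\lambda_0)^*$ to replace $M(\bar\lambda_0)^*x$ by $M(\lambda_0)x$, divide by $\lambda-\lambda_0$, and let $\lambda\to\lambda_0$ using the continuity of $\gamma(\cdot)x$ and the boundedness of $\gamma(\bar\lambda_0)^*$. Your write-up is in fact a bit more explicit about the domain bookkeeping than the paper's own proof.
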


\begin{proof}
Since (i)--(vi) were already proved in \cite[Proposition~2.6]{BL07}, we only
have to show (vii).
Let $x\in\cG_0$ and $\lambda_0,\lambda\in\rho(A_0)$.  It follows from (v) with
$\mu=\ov{\lambda_0}$ that
\[
  \frac{1}{\lambda-\lambda_0}\bigl(M(\lambda)x-M(\lambda_0)x\bigr)
  = \frac{1}{\lambda-\lambda_0}\bigl(M(\lambda)x-M(\ov{\lambda_0})^*x\bigr)
  = \gamma(\ov{\lambda_0})^*\gamma(\lambda)x.
\]
If we let $\lambda\to\lambda_0$, then the right-hand side converges, which shows
that the
derivative exists and that \eqref{deriv_M} is true for $\lambda$ replaced by
$\lambda_0$.
\end{proof}

\begin{remark} \label{re.deriv_M}
Note that the closure of the operator on the right-hand side of \eqref{deriv_M}
is
$\gamma(\ov{\lambda})^*\ov{\gamma(\lambda)}$, which is in $\cB(\cG)$.  Hence
also $\frac{d}{d\lambda}M(\lambda)$ has a bounded, everywhere defined closure,
which
we denote by $\ov{M'(\lambda)}$.  With this notation we have the identity
\begin{equation}\label{deriv_closM}
  \ov{M'(\lambda)} = \gamma(\ov\lambda)^*\ov{\gamma(\lambda)}.
\end{equation}
\end{remark}

The next lemma on the closure of the values $M(\lambda)$ of the Weyl function
$M$
will be useful in Sections \ref{3.2} and \ref{3.3}.

% -------------------------------------------------------------------
\begin{lemma} \label{pr.Mpos}
Let $A$ be a closed symmetric relation in $\cH$ and let
$\{\cG,\Gamma_0,\Gamma_1\}$ be a quasi boundary triple for $A^*$ with
$A_0=\ker\Gamma_0$,
$\gamma$-field $\gamma$ and Weyl function $M$. If
$M(\lambda_0)$ is bounded for some $\lambda_0\in\rho(A_0)$, then $M(\lambda)$ is
bounded
for all $\lambda\in\rho(A_0)$.  In this case,
\begin{equation}\label{e3}
  \frac1{\Im\lambda}\Im\ov{M(\lambda)} > 0,\qquad \lambda\in\dC\backslash\dR,
\end{equation}
and, in particular, $\ker\ov{M(\lambda)} = \{0\}$ for
$\lambda\in\dC\backslash\dR$.
\end{lemma}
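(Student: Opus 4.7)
My plan is to split the argument into three stages, using Proposition~\ref{gammaprop}(v) as the main tool in two different ways. First, I would apply Proposition~\ref{gammaprop}(v) with $\mu=\lambda_0$ to obtain
\[
  M(\lambda) = M(\lambda_0)^* + (\lambda-\overline{\lambda_0})\gamma(\lambda_0)^*\gamma(\lambda) \quad\text{on }\cG_0.
\]
Boundedness of $M(\lambda_0)$ means that $\overline{M(\lambda_0)}\in\cB(\cG)$, hence $M(\lambda_0)^*=\overline{M(\lambda_0)}^*\in\cB(\cG)$ with $\cG_0\subset\dom M(\lambda_0)^*$, so the identity is meaningful. Combined with $\gamma(\lambda_0)^*\in\cB(\cH,\cG)$ from Proposition~\ref{gammaprop}(ii) and the boundedness of $\gamma(\lambda)$ on $\cG_0$ from (i), this exhibits $M(\lambda)$ as a sum of bounded operators, so $M(\lambda)$ extends to $\overline{M(\lambda)}\in\cB(\cG)$.

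Next I would specialise Proposition~\ref{gammaprop}(v) to $\mu=\lambda$, which is legitimate since $\overline\lambda\in\rho(A_0)$, obtaining
\[
  M(\lambda)-M(\lambda)^* = 2i\,\Im\lambda\cdot\gamma(\lambda)^*\gamma(\lambda) \quad\text{on }\cG_0.
\]
Since both sides now extend boundedly to $\cG$, passing to closures produces
\[
  \Im\overline{M(\lambda)} = \Im\lambda\cdot\gamma(\lambda)^*\overline{\gamma(\lambda)},
\]
so that $\bigl(\tfrac{1}{\Im\lambda}\Im\overline{M(\lambda)}x,x\bigr)=\|\overline{\gamma(\lambda)}x\|^2\ge 0$ for every $x\in\cG$. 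Both \eqref{e3} and the injectivity claim thus reduce to showing that $\overline{\gamma(\lambda)}$ is injective on $\cG$.

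This injectivity is the main obstacle, because although $\gamma(\lambda)$ is by construction a bijection from $\cG_0$ onto $\cN_\lambda(T)$, this property does not automatically transfer to the closure. To handle it I plan to exploit Green's identity together with the density of $\ran\Gamma$: given $x\in\ker\overline{\gamma(\lambda)}$, I would choose $x_n\in\cG_0$ with $x_n\to x$ and apply \eqref{green1} to $\hat f_n=\binom{\gamma(\lambda)x_n}{\lambda\gamma(\lambda)x_n}\in\hat\cN_{\lambda,T}$ and an arbitrary $\hat g\in T$. Using $\Gamma_0\hat f_n=x_n$, $\Gamma_1\hat f_n=M(\lambda)x_n\to\overline{M(\lambda)}x$, and $\gamma(\lambda)x_n\to 0$ in $\cH$, passing to the limit yields
\[
  (\overline{M(\lambda)}x,\Gamma_0\hat g)-(x,\Gamma_1\hat g)=0 \quad\text{for all }\hat g\in T.
\]
Density of $\ran\Gamma$ in $\cG\times\cG$ then lets one approximate $\binom{0}{\psi}$ for any $\psi\in\cG$, forcing $(x,\psi)=0$ and hence $x=0$. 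The assertion $\ker\overline{M(\lambda)}=\{0\}$ finally follows immediately from the imaginary-part formula: if $\overline{M(\lambda)}x=0$ then $(\Im\overline{M(\lambda)}x,x)=0$, so $\overline{\gamma(\lambda)}x=0$ and $x=0$.
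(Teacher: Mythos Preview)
Your proof is correct and follows essentially the same route as the paper: both establish boundedness via Proposition~\ref{gammaprop}(v), reduce strict positivity to a kernel condition, and settle that condition by passing to a sequence $x_n\in\cG_0$ with $\gamma(\lambda)x_n\to 0$ and invoking the closability of $\Gamma$---which you unpack explicitly through Green's identity and the density of $\ran\Gamma$, while the paper simply cites closability. The only cosmetic difference is that you phrase the reduction as $\ker\overline{\gamma(\lambda)}=\{0\}$ rather than $\ker\bigl(\Im\overline{M(\lambda)}\bigr)=\{0\}$, but these are equivalent via the identity $\Im\overline{M(\lambda)}=\Im\lambda\cdot\gamma(\lambda)^*\overline{\gamma(\lambda)}$ that you derive.
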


\begin{proof}
It follows from Proposition~\ref{gammaprop}(v) that $M(\lambda)$ is bounded for
all $\lambda\in\rho(A_0)$ if $M(\lambda_0)$ is bounded for one
$\lambda_0\in\rho(A_0)$.
For the inequality \eqref{e3}, assume without loss of generality that
$\Im\lambda>0$.
Observe that $\Im\ov{M(\lambda)} = \ov{\Im M(\lambda)}$ since $M(\lambda)$
is bounded.  It follows from Proposition~\ref{gammaprop}(vi) that
$\Im M(\lambda)>0$.  Hence it is sufficient to show that
\begin{equation}\label{kern}
  \ker\bigl(\Im\ov{M(\lambda)}\bigr) = \{0\}.
\end{equation}
Let $x\in\ker(\Im\ov{M(\lambda)}) = \ker(\ov{\Im M(\lambda)})$.
Then there exist $x_n\in\dom M(\lambda)$ so that $x_n \to x$ and $(\Im
M(\lambda))x_n \to 0$
when $n\rightarrow\infty$.
By Proposition~\ref{gammaprop}(v) we have
\begin{equation*}
  \bigl((\Im M(\lambda))x_n,x_n\bigr)
  = \bigl((\Im\lambda)\gamma(\lambda)^*\gamma(\lambda)x_n,x_n\bigr)
  = (\Im\lambda)\|\gamma(\lambda)x_n\|^2,
\end{equation*}
and since $\Im\lambda\ne0$, this implies that $\gamma(\lambda)x_n\to0$.
Let $\hat u_n \defeq \bigl(\begin{smallmatrix}\gamma(\lambda)x_n \\
\lambda\gamma(\lambda)x_n\end{smallmatrix}\bigr)$; then
\[
  \hat u_n\in\hat\cN_{\lambda,T}, \qquad
  \hat u_n  \to 0, \qquad\text{and}\qquad
  \Gamma_0 \hat u_n = x_n \to x\quad\text{for}\,\,\,n\rightarrow\infty.
\]
Moreover, by Proposition~\ref{gammaprop}(iv) and the boundedness
of $M(\lambda)$ we obtain that $\Gamma_1 \hat u_n = M(\lambda)\Gamma_0 \hat
u_n=M(\lambda)x_n \to \ov{M(\lambda)}x$ when $n\rightarrow\infty$,
and therefore
\[
\Gamma \hat u_n = \vect{\Gamma_0 \hat u_n}{\Gamma_1 \hat u_n}
  \to \vect{x}{\ov{M(\lambda)}x},\qquad n\rightarrow\infty.
\]
Now $\hat u_n\rightarrow 0$ and the closability of $\Gamma$ imply that $x=0$,
that is, \eqref{kern} holds.
The last assertion follows easily from \eqref{e3}.
\end{proof}

For the rest of this subsection we assume that $A$ is a closed symmetric
relation
in a separable Hilbert space $\cH$.
If $\{\cG,\Gamma_0,\Gamma_1\}$ is a quasi boundary triple for $A^*$, then also
the Hilbert space $\cG$ is separable. The following proposition shows that,
roughly speaking, the property of $\ov{\gamma(\lambda)}$, $\gamma(\lambda)^*$
and $\ov{M(\lambda)}$
belonging to some two-sided operator ideal is independent of $\lambda$.

% -------------------------------------------------------------------
\begin{proposition} \label{pr.suff_cond}
Let $A$ be a closed symmetric relation in a separable Hilbert space $\cH$ and
let
$\{\cG,\Gamma_0,\Gamma_1\}$ be a quasi boundary triple for $A^*$ with
$A_0=\ker\Gamma_0$,
$\gamma$-field $\gamma$ and Weyl function $M$.
Moreover, let $\sA$ be a class of operator ideals.
Then the following assertions are true.
\begin{itemize}
\item[(i)]
  If\, $\ov{\gamma(\lambda_0)}\in\frA(\cG,\cH)$ for some $\lambda_0\in\rho(A_0)$,
  then $\ov{\gamma(\lambda)}\in\frA(\cG,\cH)$ for all $\lambda\in\rho(A_0)$.
\item[(ii)]
  If\, $\gamma(\lambda_0)^*\in\frA^*(\cH,\cG)$ for some $\lambda_0\in\rho(A_0)$,
  then $\gamma(\lambda)^*\in\frA^*(\cH,\cG)$ for all $\lambda\in\rho(A_0)$.
\item[(iii)]
  Assume that $\sA$ is even a class of symmetrically normed ideals.
  If $\ov{M(\lambda_0)}\in\sA(\cG)$ for some $\lambda_0\in\dC\backslash\dR$, then
  $\ov{M(\lambda)}\in\frA(\cG)$ for all $\lambda\in\rho(A_0)$.
\end{itemize}
\end{proposition}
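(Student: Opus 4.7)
The plan is to establish (i) by transferring the ideal property along the resolvent-like identity of Proposition~\ref{gammaprop}(i), deduce (ii) from (i) by duality, and prove (iii) via a factorisation of $\ov{\Im M(\lambda_0)}$ combined with Lemma~\ref{ideallem}.

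For (i), I would take the identity $\gamma(\lambda)=\bigl(I+(\lambda-\mu)(A_0-\lambda)^{-1}\bigr)\gamma(\mu)$ from Proposition~\ref{gammaprop}(i), valid on $\cG_0$, and pass to closures on both sides; since the prefactor lies in $\cB(\cH)$, this yields
\[
  \ov{\gamma(\lambda)} = \bigl(I+(\lambda-\mu)(A_0-\lambda)^{-1}\bigr)\ov{\gamma(\mu)}.
\]
Condition (iii) of Definition~\ref{def.class_op_ideal} then propagates membership $\ov{\gamma(\mu)}\in\frA(\cG,\cH)$ from $\mu=\lambda_0$ to every $\lambda\in\rho(A_0)$. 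Part (ii) is then immediate from (i): since $\gamma(\lambda)^*\in\cB(\cH,\cG)$ by Proposition~\ref{gammaprop}(ii) and $(\gamma(\lambda)^*)^*=\ov{\gamma(\lambda)}$, the condition $\gamma(\lambda)^*\in\frA^*(\cH,\cG)$ is, by the very definition of the adjoint class, equivalent to $\ov{\gamma(\lambda)}\in\frA(\cG,\cH)$.

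For (iii), I would first invoke Lemma~\ref{pr.Mpos} to ensure $\ov{M(\lambda)}$ is a bounded operator on $\cG$ for every $\lambda\in\rho(A_0)$, so the statement makes sense. Fix $\lambda_0\in\dC\setminus\dR$ with $\ov{M(\lambda_0)}\in\sA(\cG)$. Because $\sA$ consists of symmetrically normed ideals and $s_k(A)=s_k(A^*)$, also $\ov{M(\lambda_0)}^*\in\sA(\cG)$; combined with $M(\ov{\lambda_0})\subset M(\lambda_0)^*$ and the boundedness of both closures, this yields $\ov{M(\ov{\lambda_0})}=\ov{M(\lambda_0)}^*$, hence $\ov{\Im M(\lambda_0)}\in\sA(\cG)$. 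Taking closures in the identity $\Im M(\lambda_0)=(\Im\lambda_0)\gamma(\lambda_0)^*\gamma(\lambda_0)$ (which follows from Proposition~\ref{gammaprop}(v) with $\mu=\lambda=\lambda_0$) produces the factorisation
\[
  \ov{\Im M(\lambda_0)} = (\Im\lambda_0)\,\gamma(\lambda_0)^*\ov{\gamma(\lambda_0)} = B^*B,\qquad
  B \defeq |\Im\lambda_0|^{1/2}\,\ov{\gamma(\lambda_0)}\in\cB(\cG,\cH),
\]
precisely the shape required by Lemma~\ref{ideallem}. That lemma then delivers $\gamma(\lambda_0)^*C\ov{\gamma(\lambda_0)}\in\sA(\cG)$ for every $C\in\cB(\cH)$. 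Plugging this into the holomorphy formula of Proposition~\ref{gammaprop}(v) with $\mu=\lambda_0$ and passing to closures decomposes $\ov{M(\lambda)}$ as the sum of $\Real\ov{M(\lambda_0)}$, which lies in $\sA(\cG)$ by symmetric normedness, and a term $\gamma(\lambda_0)^*C_\lambda\ov{\gamma(\lambda_0)}$ with $C_\lambda\in\cB(\cH)$, which lies in $\sA(\cG)$ by the preceding step.

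The main obstacle is the step in (iii) of producing a \emph{sandwich} $B^*CB\in\sA$ out of the bare factored membership $B^*B\in\sA$: the ideal axioms only guarantee stability under multiplication by bounded operators on the outside, so one cannot a priori insert a bounded factor between $B^*$ and $B$. This is exactly what the symmetric-norm hypothesis and Lemma~\ref{ideallem} provide; the same hypothesis is also what guarantees that $\ov{M(\lambda_0)}^*$, and hence $\Real\ov{M(\lambda_0)}$ and $\Im\ov{M(\lambda_0)}$, remain in $\sA(\cG)$.
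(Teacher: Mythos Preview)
Your proof is correct and follows essentially the same route as the paper's: part (i) via the resolvent-like identity of Proposition~\ref{gammaprop}(i) and the ideal property, part (ii) by duality reducing to (i), and part (iii) by extracting $\gamma(\lambda_0)^*\ov{\gamma(\lambda_0)}\in\sA(\cG)$ from $\ov{\Im M(\lambda_0)}$, invoking Lemma~\ref{ideallem}, and feeding the result into the holomorphy formula of Proposition~\ref{gammaprop}(v). Your write-up is slightly more explicit than the paper's (you spell out why $\ov{M(\lambda_0)}^*\in\sA(\cG)$ via $s_k(A)=s_k(A^*)$ and symmetric normedness, and you invoke Lemma~\ref{pr.Mpos} up front), but there is no substantive difference in strategy.
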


\begin{proof}
(i)
It follows immediately from $I+(\lambda-\lambda_0)(A_0-\lambda)^{-1}\in\cB(\cH)$
and Proposition~\ref{gammaprop}(i) that
\[
  \ov{\gamma(\lambda)}
  = \bigl(I+(\lambda-\lambda_0)(A_0-\lambda)^{-1}\bigr)\ov{\gamma(\lambda_0)}
\]
holds for all $\lambda,\lambda\in\rho(A_0)$. The ideal property directly implies
the assertion.

(ii)
If $\gamma(\lambda_0)^*\in\sA(\cH,\cG)$, then
$\ov{\gamma(\lambda_0)}=\gamma(\lambda_0)^{**}\in\sA^*(\cG,\cH)$.
By (i) this implies that $\ov{\gamma(\lambda)}\in\sA^*(\cG,\cH)$ for
all $\lambda\in\rho(A_0)$ and hence
$\gamma(\lambda)^*\in\sA(\cH,\cG)$ for all $\lambda\in\rho(A_0)$.

(iii)
Assume that $\ov{M(\lambda_0)}\in\frA(\cG)$ for some
$\lambda_0\in\dC\backslash\dR$. Then also
$\Re\ov{M(\lambda_0)}$ and $\Im\ov{M(\lambda_0)}$ belong to $\frA(\cG)$, and
by Proposition~\ref{gammaprop}(v) we have
\begin{equation*}
  \frac1{\Im\lambda_0}\Im\ov{M(\lambda_0)} =
\gamma(\lambda_0)^*\ov{\gamma(\lambda_0)}\in\frA(\cG).
\end{equation*}
Since $\ov{\gamma(\lambda_0)}\in\cB(\cG,\cH)$ and
$\gamma(\lambda_0)^*=\ov{\gamma(\lambda_0)}^{\,*}\in\cB(\cH,\cG)$, we can use
Lemma~\ref{ideallem} to conclude that for every $\lambda\in\rho(A_0)$ also
\begin{equation}\label{yxcv}
  \gamma(\lambda_0)^*\bigl((\lambda-\Re\lambda_0)+(\lambda-\lambda_0)
  (\lambda-\ov{\lambda_0})(A_0-\lambda)^{-1}\bigr)\ov{\gamma(\lambda_0)}
  \in\frA(\cG).
\end{equation}
It follows from Proposition~\ref{gammaprop}(v) that for $\lambda\in\rho(A_0)$ we
have
\begin{equation*}
  \ov{M(\lambda)}=\Re\ov{M(\lambda_0)}
  +\gamma(\lambda_0)^*\bigl((\lambda-\Re\lambda_0)+(\lambda-\lambda_0)
  (\lambda-\ov{\lambda_0})(A_0-\lambda)^{-1}\bigr)\ov{\gamma(\lambda_0)}.
\end{equation*}
Therefore $\Re\ov{M(\lambda_0)}\in\frA(\cG)$ and \eqref{yxcv} imply that
$\ov{M(\lambda)}\in\frA(\cG)$ for all $\lambda\in\rho(A_0)$.
\end{proof}

\begin{remark}\label{canbeuseful}
Note that in Proposition~\ref{pr.suff_cond}(iii) it is assumed that $\lambda_0$
is non-real.
However, it follows from the proof of Proposition~\ref{pr.suff_cond}(iii) that
the assumptions $\overline{M(\lambda_1)}\in\frA(\cG)$ and
$\gamma(\lambda_1)^*\overline{\gamma(\lambda_1)}\in\frA(\cG)$ for some
$\lambda_1\in\dR\cap\rho(A_0)$ also yield $\overline{M(\lambda)}\in\frA(\cG)$
for all $\lambda\in\rho(A_0)$.  However, the assumption
$\ov{M(\lambda_1)}\in\frA(\cG)$
for some $\lambda_1\in\RR\cap\rho(A_0)$ alone does not imply that
$\ov{M(\lambda)}\in\frA(\cG)$ for all $\lambda\in\rho(A_0)$.
\end{remark}

% -------------------------------------------------------------------
\begin{proposition} \label{pr.suff_cond2}
Let $\sA$ be a class of operator ideals.  Moreover, let $\gamma$ be the $\gamma$-field
associated with some quasi boundary triple $\{\cG,\Gamma_0,\Gamma_1\}$,
let $\wt\cG_1$ be a Hilbert space such that $\cG_1\subset\wt\cG_1\subset\cG$ and
the embedding $\iota_{\wt\cG_1\to\cG}$ belongs to $\frA(\widetilde\cG_1,\cG)$.
Then
\begin{equation}\label{lustig}
  \gamma(\lambda)^*\in\frA(\cH,\cG)
\end{equation}
for all $\lambda\in\rho(A_0)$.
\end{proposition}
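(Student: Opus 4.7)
The plan is to use the information from Proposition~\ref{gammaprop}(ii), namely that $\gamma(\lambda)^*\in\cB(\cH,\cG)$ with $\ran\gamma(\lambda)^*\subset\cG_1$, combined with the hypothesis that the inclusion $\wt\cG_1\hookrightarrow\cG$ is a bounded operator lying in the ideal $\frA$, and to factor $\gamma(\lambda)^*$ through $\wt\cG_1$ so that the ideal property forces it into $\frA(\cH,\cG)$.

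Concretely, I fix $\lambda\in\rho(A_0)$ and consider the map $T\colon\cH\to\wt\cG_1$ defined by $Th\defeq\gamma(\lambda)^*h$. This is well defined because $\gamma(\lambda)^*h\in\cG_1\subset\wt\cG_1$ by Proposition~\ref{gammaprop}(ii). Then, by construction,
\begin{equation*}
  \gamma(\lambda)^* = \iota_{\wt\cG_1\to\cG}\,T
\end{equation*}
holds as operators from $\cH$ into $\cG$. Since $\iota_{\wt\cG_1\to\cG}\in\frA(\wt\cG_1,\cG)$ by assumption, the ideal property from Definition~\ref{def.class_op_ideal}(iii) will yield $\gamma(\lambda)^*\in\frA(\cH,\cG)$ as soon as $T$ is shown to lie in $\cB(\cH,\wt\cG_1)$.

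To verify that $T\in\cB(\cH,\wt\cG_1)$, I would use the closed graph theorem. Suppose $h_n\to h$ in $\cH$ and $Th_n\to k$ in $\wt\cG_1$. Because $\iota_{\wt\cG_1\to\cG}$ is bounded (as a member of $\sS_\infty(\wt\cG_1,\cG)$), convergence in $\wt\cG_1$ implies convergence in $\cG$, so $\gamma(\lambda)^*h_n\to k$ in $\cG$. On the other hand, $\gamma(\lambda)^*\in\cB(\cH,\cG)$ gives $\gamma(\lambda)^*h_n\to\gamma(\lambda)^*h$ in $\cG$. Uniqueness of limits in $\cG$, together with $\gamma(\lambda)^*h\in\wt\cG_1$, forces $k=\gamma(\lambda)^*h=Th$ in $\wt\cG_1$, so the graph of $T$ is closed and $T$ is bounded.

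The only subtle point — and the one place where one has to be a little careful — is the closed graph step, since the topologies of $\wt\cG_1$ and $\cG$ are a priori different on the common set of vectors; but the continuity of the inclusion saves the day. Since the argument is valid for every $\lambda\in\rho(A_0)$, no separate application of Proposition~\ref{pr.suff_cond}(ii) is needed to propagate the conclusion across the resolvent set.
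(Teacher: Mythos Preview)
Your proof is correct and follows essentially the same route as the paper: factor $\gamma(\lambda)^*$ as $\iota_{\wt\cG_1\to\cG}$ composed with $\gamma(\lambda)^*$ viewed as a map into $\wt\cG_1$, use the closed graph theorem (via the boundedness of the inclusion) to see that the latter is bounded, and then invoke the ideal property. The paper's argument is phrased slightly more tersely but is identical in substance.
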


\begin{proof}
For every $\lambda\in\rho(A_0)$ we have $\gamma(\lambda)^*\in\cB(\cH,\cG)$ and
$\ran\gamma(\lambda)^*\subset\cG_1$ by Proposition~\ref{gammaprop}(ii).
Hence $\gamma(\lambda)^*$ is closed as an operator from  $\cH$ to $\cG$.
Because $\iota_{\wt\cG_1\to\cG}$ is bounded, $\gamma(\lambda)^*$ regarded as
an operator from $\cH$ into $\widetilde\cG_1$ is also closed and hence bounded
by the closed graph theorem, that is,
$\gamma(\lambda)^*\in\cB(\cH,\widetilde\cG_1)$.
Hence, by the ideal property, \eqref{lustig} holds.
\end{proof}

% *******************************************************************
\subsection{Krein's formula and self-adjoint extensions}\label{3.2}

The following theorem and corollary contain a variant of Krein's formula for
the resolvents of canonical extensions parameterized
with the help of quasi boundary triples; cf. \eqref{atheta} and
\eqref{athetaop}.
The proof is given after the next corollary.

% -------------------------------------------------------------------
\begin{theorem} \label{th.krein}
Let $A$ be a closed symmetric relation in $\cH$ and let $\{\cG,\Gamma_0,\Gamma_1\}$
be a quasi boundary triple for $A^*$ with $A_0=\ker\Gamma_0$, $\gamma$-field
$\gamma$
and Weyl function $M$.
Further, let $\Theta$ be a relation in $\cG$ and assume that
$\lambda\in\rho(A_0)$ is not an eigenvalue of $A_\Theta$, or, equivalently, that
$\ker(\Theta-M(\lambda))=\{0\}$. Then the following assertions are true:
\begin{itemize}
\item[(i)]
$g\in\ran(A_\Theta-\lambda)$ if and only if
$\gamma(\ov\lambda)^*g\in\dom (\Theta-M(\lambda))^{-1}$;
\item[(ii)]
for all $g\in\ran(A_\Theta-\lambda)$ we have
\begin{equation} \label{krein1}
  (A_\Theta-\lambda)^{-1}g = (A_0-\lambda)^{-1}g
  + \gamma(\lambda)\bigl(\Theta-M(\lambda)\bigr)^{-1}\gamma(\ov\lambda)^*g.
\end{equation}
\end{itemize}
\end{theorem}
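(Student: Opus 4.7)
My plan is to prove both assertions simultaneously by exploiting the direct sum decomposition
\[
  T = A_0 \,\widehat+\, \hat\cN_{\lambda,T}, \qquad \lambda\in\rho(A_0),
\]
together with the characterizations of the boundary maps on each summand via the $\gamma$-field and the Weyl function. Before starting, I would quickly verify the parenthetical equivalence: if $\hat f=\binom{f}{\lambda f}\in A_\Theta$ with $f\ne 0$, then $\hat f\in\hat\cN_{\lambda,T}$, so $\Gamma_1\hat f=M(\lambda)\Gamma_0\hat f$ by Proposition~\ref{gammaprop}(iv) and the inclusion $\binom{\Gamma_0\hat f}{\Gamma_1\hat f}\in\Theta$ translates to $\Gamma_0\hat f\in\ker(\Theta-M(\lambda))$; the converse direction is analogous.

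For the main direction of (i) and the formula (ii), fix $g\in\ran(A_\Theta-\lambda)$ and choose $\hat f=\binom{f}{f'}\in A_\Theta$ with $f'-\lambda f=g$. I would then split $\hat f=\hat f_0+\hat f_\lambda$ with $\hat f_0\in A_0$ and $\hat f_\lambda=\binom{f_\lambda}{\lambda f_\lambda}\in\hat\cN_{\lambda,T}$. Subtracting $\lambda$ from the first components eliminates $f_\lambda$, so $\hat f_0-\lambda\binom{f_0}{f_0}=\binom{0}{g}$, giving $f_0=(A_0-\lambda)^{-1}g$. Applying $\Gamma$ and using $\Gamma_0\hat f_0=0$, Proposition~\ref{gammaprop}(iv) (so that $\Gamma_1\hat f_\lambda=M(\lambda)\Gamma_0\hat f_\lambda$), and Proposition~\ref{gammaprop}(ii) (which identifies $\Gamma_1\hat f_0=\gamma(\ov\lambda)^*g$), I find that with $\varphi\defeq\Gamma_0\hat f_\lambda$,
\[
  \binom{\varphi}{\gamma(\ov\lambda)^*g+M(\lambda)\varphi}=\binom{\Gamma_0\hat f}{\Gamma_1\hat f}\in\Theta,
\]
i.e.\ $(\varphi,\gamma(\ov\lambda)^*g)\in\Theta-M(\lambda)$. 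The assumption $\ker(\Theta-M(\lambda))=\{0\}$ lets me invert this relation, so $\gamma(\ov\lambda)^*g\in\dom(\Theta-M(\lambda))^{-1}$ and $\varphi=(\Theta-M(\lambda))^{-1}\gamma(\ov\lambda)^*g$. Since $f_\lambda=\gamma(\lambda)\varphi$ by the definition of the $\gamma$-field, adding $f_0+f_\lambda=f=(A_\Theta-\lambda)^{-1}g$ yields \eqref{krein1}.

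For the converse direction of (i), suppose $\gamma(\ov\lambda)^*g\in\dom(\Theta-M(\lambda))^{-1}$. I would set $\varphi\defeq(\Theta-M(\lambda))^{-1}\gamma(\ov\lambda)^*g\in\cG_0$, put $\hat f_\lambda\defeq\binom{\gamma(\lambda)\varphi}{\lambda\gamma(\lambda)\varphi}\in\hat\cN_{\lambda,T}$ and define $\hat f\defeq\hat f_0+\hat f_\lambda$ where $\hat f_0=\binom{(A_0-\lambda)^{-1}g}{(I+\lambda(A_0-\lambda)^{-1})g}\in A_0$. A direct computation shows $\hat f-\lambda\binom{f}{f}=\binom{0}{g}$, and the boundary-map computation run in reverse produces $\binom{\Gamma_0\hat f}{\Gamma_1\hat f}=\binom{\varphi}{\gamma(\ov\lambda)^*g+M(\lambda)\varphi}\in\Theta$, so $\hat f\in A_\Theta$ and $g\in\ran(A_\Theta-\lambda)$, which also gives \eqref{krein1} again by reading off $(A_\Theta-\lambda)^{-1}g=f$.

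The only genuinely delicate point is keeping the relation-theoretic manipulations of $\Theta-M(\lambda)$ and its inverse correct when $\Theta$ is a multi-valued relation rather than an operator; I will handle this by arguing throughout with pairs $(\varphi,\psi)\in\Theta-M(\lambda)$ (so that sums and compositions are read in the sense of linear relations) and only at the end converting to the inverse notation. Apart from this bookkeeping, the proof is a straightforward application of the decomposition of $T$ together with Proposition~\ref{gammaprop}(ii) and (iv).
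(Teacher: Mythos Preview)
Your argument is correct and follows essentially the same route as the paper: both proofs exploit the decomposition $T=A_0\,\widehat+\,\hat\cN_{\lambda,T}$ together with Proposition~\ref{gammaprop}(ii) and (iv) to identify the two components of $(A_\Theta-\lambda)^{-1}g$. The only notable difference is that the paper cites \cite[Theorem~2.8]{BL07} for the eigenvalue equivalence and for the converse implication in (i), whereas you supply short direct arguments for both; your reverse construction of $\hat f$ is exactly what underlies that cited result, so nothing is lost and the proof becomes self-contained.
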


If $\rho(A_\Theta)\cap\rho(A_0)\ne\varnothing$ or
$\rho(\ov{A_\Theta})\cap\rho(A_0)\ne\varnothing$,
e.g.\ if $A_\Theta$ is self-adjoint or essentially self-adjoint, respectively,
then
for $\lambda\in\rho(\ov{A_\Theta})\cap\rho(A_0)$, relation \eqref{krein1} is
valid on $\cH$
or a dense subset of $\cH$, respectively.  This, together with the fact that
$\gamma(\bar\lambda)^*$ is an everywhere defined bounded operator and
\[
  \gamma(\lambda)\bigl(\Theta-M(\lambda)\bigr)^{-1}\gamma(\bar\lambda)^*
  \subset
\overline{\gamma(\lambda)\bigl(\Theta-M(\lambda)\bigr)^{-1}}
\gamma(\bar\lambda)^*
\]
implies the following corollary.

\begin{corollary}\label{cor.krein}
Let the assumptions be as in Theorem~\ref{th.krein}. Then the following
assertions hold.
\begin{itemize}
\item[(i)]
If $\lambda\in\rho(A_\Theta)\cap\rho(A_0)$, then
\begin{equation} \label{krein2}
  (A_\Theta-\lambda)^{-1} = (A_0-\lambda)^{-1}
  + \gamma(\lambda)\bigl(\Theta-M(\lambda)\bigr)^{-1}\gamma(\ov\lambda)^*.
\end{equation}
\item[(ii)]
If $\lambda\in\rho(\ov{A_\Theta})\cap\rho(A_0)$, then
\begin{equation} \label{krein3}
  (\ov{A_\Theta}-\lambda)^{-1} = (A_0-\lambda)^{-1}
  + \ov{\gamma(\lambda)\bigl(\Theta-M(\lambda)\bigr)^{-1}}\gamma(\ov\lambda)^*.
\end{equation}
\end{itemize}
\end{corollary}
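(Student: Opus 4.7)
The plan is to derive both items as direct consequences of Theorem~\ref{th.krein} by specialising the range condition in Theorem~\ref{th.krein}(i) to the two hypotheses. A preliminary observation is used in both parts: since $A_\Theta\subset\ov{A_\Theta}$, having $\lambda\in\rho(A_\Theta)$ or $\lambda\in\rho(\ov{A_\Theta})$ implies $\ker(A_\Theta-\lambda)=\{0\}$, hence by the equivalence stated in Theorem~\ref{th.krein} also $\ker(\Theta-M(\lambda))=\{0\}$, so the theorem applies and \eqref{krein1} holds on $\ran(A_\Theta-\lambda)$.

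For (i), I would simply note that $\lambda\in\rho(A_\Theta)$ forces $\ran(A_\Theta-\lambda)=\cH$. Theorem~\ref{th.krein}(i) then yields $\ran\gamma(\ov\lambda)^*\subset\dom(\Theta-M(\lambda))^{-1}$, and \eqref{krein1} is valid for every $g\in\cH$, which is precisely the operator identity \eqref{krein2}.

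For (ii), the starting point is that $\lambda\in\rho(\ov{A_\Theta})$ means $(\ov{A_\Theta}-\lambda)^{-1}\in\cB(\cH)$ is the bounded closure of $(A_\Theta-\lambda)^{-1}$; in particular, $\ran(A_\Theta-\lambda)$ is dense in $\cH$. On this dense subset Theorem~\ref{th.krein}(ii) reads
\[
  (A_\Theta-\lambda)^{-1}g-(A_0-\lambda)^{-1}g
  =\gamma(\lambda)\bigl(\Theta-M(\lambda)\bigr)^{-1}\gamma(\ov\lambda)^*g.
\]
The left-hand side extends continuously to the bounded operator $(\ov{A_\Theta}-\lambda)^{-1}-(A_0-\lambda)^{-1}$ on all of $\cH$. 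For the right-hand side I would invoke the inclusion
\[
  \gamma(\lambda)\bigl(\Theta-M(\lambda)\bigr)^{-1}\gamma(\ov\lambda)^*
  \subset\ov{\gamma(\lambda)\bigl(\Theta-M(\lambda)\bigr)^{-1}}\,\gamma(\ov\lambda)^*
\]
noted in the corollary's preamble, together with the fact that $\gamma(\ov\lambda)^*\in\cB(\cH,\cG)$ is everywhere defined. Since composition on the right with a bounded everywhere-defined operator preserves closedness, the operator on the right-hand side is closed; it coincides on a dense subset with the bounded operator on the left, and a closed operator that agrees with a bounded one on a dense subset must equal that bounded operator on all of $\cH$. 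This delivers \eqref{krein3}.

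The main obstacle, and the place where care is needed, is exactly this closure argument in part (ii): one must justify that pulling the closure past the factor $\gamma(\ov\lambda)^*$ is legitimate and that the resulting expression is everywhere defined. The justification rests on two facts, both already available above---$\gamma(\ov\lambda)^*$ is bounded and defined on all of $\cH$ (Proposition~\ref{gammaprop}(ii)), and the density of $\ran(A_\Theta-\lambda)$ together with boundedness of $(\ov{A_\Theta}-\lambda)^{-1}-(A_0-\lambda)^{-1}$ forces the closed extension to be bounded everywhere. No further structural input about $\Theta$ or $M(\lambda)$ is needed.
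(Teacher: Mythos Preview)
Your proof is correct and follows essentially the same approach as the paper: for (i) you use that $\ran(A_\Theta-\lambda)=\cH$ so \eqref{krein1} holds everywhere, and for (ii) you use density of $\ran(A_\Theta-\lambda)$ together with the inclusion $\gamma(\lambda)(\Theta-M(\lambda))^{-1}\gamma(\ov\lambda)^*\subset\ov{\gamma(\lambda)(\Theta-M(\lambda))^{-1}}\,\gamma(\ov\lambda)^*$ and boundedness of $\gamma(\ov\lambda)^*$---exactly the ingredients the paper cites in the paragraph preceding the corollary. Your added justification that the right-hand composite is closed (closed operator composed on the right with a bounded everywhere-defined operator) is a useful detail the paper leaves implicit.
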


In particular, if $A_\Theta$ is self-adjoint or essentially self-adjoint, then
Krein's formula \eqref{krein2} or \eqref{krein3}, respectively, holds
at least for all non-real $\lambda$.

Let us now turn to the proof of Theorem~\ref{th.krein}.

\begin{proof}[Proof of Theorem \ref{th.krein}]
First note that by \cite[Theorem~2.8(i)]{BL07} the point $\lambda\in\rho(A_0)$
is not an eigenvalue of $A_\Theta$ if and only if
$\ker(\Theta-M(\lambda))=\{0\}$.
Fix some point $\lambda\in\rho(A_0)$ which is not an eigenvalue of $A_\Theta$.
Then the inverses $(A_\Theta-\lambda)^{-1}$ and $(\Theta-M(\lambda))^{-1}$ are
operators in $\cH$ and $\cG$, respectively.

Let $g\in\ran(A_\Theta-\lambda)$.
We show that $\gamma(\ov\lambda)^*g\in\dom (\Theta-M(\lambda))^{-1}$
and that formula \eqref{krein1} holds.
Set
\[
  f \defeq (A_\Theta-\lambda)^{-1}g-(A_0-\lambda)^{-1}g \quad\text{and}\quad
  \wh h \defeq
\vect{(A_\Theta-\lambda)^{-1}g}{g+\lambda(A_\Theta-\lambda)^{-1}g}.
\]
Then we have $f\in\cN_\lambda(T)=\ker(T-\lambda)$ and $\wh h\in A_\Theta$.
Moreover,
\begin{equation*}
  \Gamma_0\binom{f}{\lambda f}
  = \Gamma_0\wh h
  -\Gamma_0
  \underbrace{\vect{(A_0-\lambda)^{-1}g}{g+\lambda(A_0-\lambda)^{-1}g}}_{\in
A_0}
  = \Gamma_0\wh h
\end{equation*}
and
\begin{equation*}
  \Gamma_1\binom{f}{\lambda f}
  = \Gamma_1\wh h
  -\Gamma_1\vect{(A_0-\lambda)^{-1}g}{g+\lambda(A_0-\lambda)^{-1}g}
  = \Gamma_1\wh h
  -\gamma(\ov\lambda)^*g
\end{equation*}
by Proposition~\ref{gammaprop}(ii). These equalities together with
Proposition~\ref{gammaprop}(iv) yield
\begin{equation*}
  \gamma(\ov\lambda)^*g
  = \Gamma_1\wh h - \Gamma_1\binom{f}{\lambda f}
  = \Gamma_1\wh h - M(\lambda)\Gamma_0\binom{f}{\lambda f}
  = \Gamma_1\wh h -M(\lambda)\Gamma_0\wh h.
\end{equation*}
Since $\wh h\in A_\Theta$, we have $\binom{\Gamma_0\wh h}{\Gamma_1\wh
h}\in\Theta$
by \eqref{atheta} and hence
\begin{equation}\label{asdf}
  \vect{\Gamma_0\wh h}{\gamma(\ov\lambda)^*g}
  = \vect{\Gamma_0\wh h}{\Gamma_1\wh h - M(\lambda)\Gamma_0\wh h} \in
\Theta-M(\lambda),
\end{equation}
which implies $\gamma(\ov\lambda)^*g\in\dom (\Theta-M(\lambda))^{-1}$, that is,
one implication in (i) is proved.  Furthermore, it follows from \eqref{asdf}
that
$\Gamma_0\wh h = (\Theta-M(\lambda))^{-1}\gamma(\ov\lambda)^*g$
since $(\Theta-M(\lambda))^{-1}$ is an operator.  Therefore
\begin{align*}
  \gamma(\lambda)\bigl(\Theta-M(\lambda)\bigr)^{-1}\gamma(\ov\lambda)^*g
  &= \gamma(\lambda)\Gamma_0\wh h
  = \gamma(\lambda)\Gamma_0\binom{f}{\lambda f} \\[1ex]
  &= f = (A_\Theta-\lambda)^{-1}g-(A_0-\lambda)^{-1}g,
\end{align*}
which shows the relation \eqref{krein1}.
The converse implication in (i) was shown in the proof of
\cite[Theorem~2.8(ii)]{BL07}.
\end{proof}

With the help of Krein's formula and the next lemma we will obtain a
sufficient condition for self-adjointness of extensions $A_\Theta$ in
Theorem~\ref{th.A_Th_self_adj} below. Recall that $\sS_\infty(\cG)$
denotes the two-sided ideal of compact operators in $\cB(\cG)$.

\begin{lemma} \label{pr.inv_bdd}
Let $\{\cG,\Gamma_0,\Gamma_1\}$ be a quasi boundary triple with associated Weyl
function $M$.
Assume that $\ov{M(\lambda_0)}\in\sS_\infty(\cG)$ for some
$\lambda_0\in\CC\backslash\RR$ and
let $\Theta$ be a self-adjoint relation in $\cG$ such that
$0\notin\sess(\Theta)$.
Then
\[
  \bigl(\Theta-\ov{M(\lambda)}\bigr)^{-1}\in\cB(\cG)
\]
for all $\lambda\in\CC\backslash\RR$.
\end{lemma}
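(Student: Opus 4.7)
The plan is to reduce the statement to a Fredholm-type invertibility question for the operator part of $\Theta$ alone. I will first invoke Proposition~\ref{pr.suff_cond}(iii) with $\frA=\sS_\infty$ (which is a class of symmetrically normed ideals) to propagate the compactness $\ov{M(\lambda_0)}\in\sS_\infty(\cG)$ to every $\lambda\in\rho(A_0)$, and in particular to every $\lambda\in\CC\setminus\RR$. I will also use Lemma~\ref{pr.Mpos} to secure the strict sign property $\frac{1}{\Im\lambda}\Im\ov{M(\lambda)}>0$ throughout $\CC\setminus\RR$.

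Next, I would decompose $\Theta=\Theta_{\rm op}\oplus\Theta_\infty$ with respect to $\cG=\cG_{\rm op}\oplus\cG_\infty$, where $\cG_\infty=\mul\Theta$ and $\Theta_{\rm op}$ is a self-adjoint operator on $\cG_{\rm op}=\ov{\dom\Theta}$, and let $P_{\rm op},P_\infty$ be the corresponding orthogonal projections. Setting $A(\lambda):=P_{\rm op}\ov{M(\lambda)}\!\upharpoonright\!\cG_{\rm op}\in\sS_\infty(\cG_{\rm op})$, a direct unwinding of the definition of the sum of linear relations should give the characterization
\[
  (x,z)\in\Theta-\ov{M(\lambda)}
  \,\Longleftrightarrow\,
  x\in\dom\Theta_{\rm op}\,\,\text{and}\,\,P_{\rm op}z=\bigl(\Theta_{\rm op}-A(\lambda)\bigr)x,
\]
with no restriction on $P_\infty z\in\cG_\infty$. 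This will reduce the claim $(\Theta-\ov{M(\lambda)})^{-1}\in\cB(\cG)$ to the statement that $\Theta_{\rm op}-A(\lambda)\colon\dom\Theta_{\rm op}\to\cG_{\rm op}$ is a bijection; in that case one even has the explicit formula $(\Theta-\ov{M(\lambda)})^{-1}z=(\Theta_{\rm op}-A(\lambda))^{-1}P_{\rm op}z$, which is manifestly bounded and everywhere defined on $\cG$.

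I would then establish the bijectivity of $\Theta_{\rm op}-A(\lambda)$ as follows. Since $\sess(\Theta)=\sess(\Theta_{\rm op})$ by definition, the hypothesis $0\notin\sess(\Theta)$ makes $\Theta_{\rm op}$ Fredholm of index zero in $\cG_{\rm op}$, and the compact perturbation $-A(\lambda)$ preserves both properties; hence it suffices to check injectivity. If $x\in\dom\Theta_{\rm op}$ satisfies $\Theta_{\rm op}x=A(\lambda)x$, then pairing with $x\in\cG_{\rm op}$ and using $(A(\lambda)x,x)_{\cG_{\rm op}}=(\ov{M(\lambda)}x,x)_\cG$, the self-adjointness of $\Theta_{\rm op}$ forces $(\Im\ov{M(\lambda)}x,x)=0$, and Lemma~\ref{pr.Mpos} then yields $x=0$. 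Boundedness of the resulting inverse is automatic from the closed graph theorem.

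The main technical point will be the relation-arithmetic step that simultaneously keeps track of $\mul\Theta$ and of the off-diagonal blocks of $\ov{M(\lambda)}$; once that reduction to the genuine operator equation $(\Theta_{\rm op}-A(\lambda))x=P_{\rm op}z$ on $\cG_{\rm op}$ is carried out correctly, the remainder is the standard combination of Fredholm theory with the definite sign of the imaginary part of the Weyl function.
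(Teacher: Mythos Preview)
Your proposal is correct and follows essentially the same route as the paper: propagate compactness via Proposition~\ref{pr.suff_cond}(iii), decompose $\Theta=\Theta_{\rm op}\oplus\Theta_\infty$, show that $\Theta_{\rm op}-P_{\rm op}\ov{M(\lambda)}\!\upharpoonright\!\cG_{\rm op}$ is Fredholm of index zero with trivial kernel by the sign property of $\Im\ov{M(\lambda)}$ from Lemma~\ref{pr.Mpos}, and conclude via the identity $(\Theta-\ov{M(\lambda)})^{-1}=(\Theta_{\rm op}-A(\lambda))^{-1}P_{\rm op}$. The only cosmetic difference is that the paper cites \cite[p.~137]{LT77} for this last formula, whereas you derive it by hand from the relation arithmetic; your derivation is correct.
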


\begin{proof}
According to Proposition~\ref{pr.suff_cond}(iii) the operator $\ov{M(\lambda)}$
is compact
for all $\lambda\in\CC\backslash\RR$ because $\frS_\infty(\cG)$ is a
symmetrically normed ideal.
Without loss of generality let in the following $\lambda\in\CC^+$.
We can decompose the self-adjoint relation $\Theta$ into its self-adjoint
operator part
and the purely multi-valued part:
$\Theta=\Theta_{\rm op}\oplus\Theta_\infty$ with a corresponding decomposition
of the space $\cG=\cG_{\rm op}\oplus\cG_\infty$; cf. Section~\ref{2.1}.
Denote by $P_{\rm op}$ the orthogonal projection in $\cG$ onto $\cG_{\rm op}$.
Since $0\notin\sess(\Theta_{\rm op})$ and $\ov{M(\lambda)}$ is compact, the
operator $\Theta_{\rm op}-P_{\rm op}\ov{M(\lambda)}\vert_{\cG_{\rm op}}$
is a Fredholm operator in $\cG_{\rm op}$ with index $0$.
For $x\in\dom\Theta_{\rm op}$, $x\ne0$, we have
\begin{align*}
  \Im\bigl((\Theta_{\rm op}-P_{\rm op}\ov{M(\lambda)}\vert_{\cG_{\rm op}})x,
  x\bigr)_{\cG_{\rm op}}
  &= -\Im(\ov{M(\lambda)}x,x) \\
  &= -\bigl((\Im\ov{M(\lambda)})x,x\bigr) < 0,
\end{align*}
by Lemma~\ref{pr.Mpos}; hence
$\Theta_{\rm op}-P_{\rm op}\ov{M(\lambda)}\vert_{\cG_{\rm op}}$ has a trivial
kernel.
Since its index is zero, it is also surjective.  Because of the closedness,
its inverse is a bounded and everywhere defined operator in $\cG_{\rm op}$.
By \cite[p.~137]{LT77} we have
\[
  \bigl(\Theta-\ov{M(\lambda)}\bigr)^{-1}
  = \bigl(\Theta_{\rm op}-P_{\rm op}\ov{M(\lambda)}\vert_{\cG_{\rm
op}}\bigr)^{-1}
  P_{\rm op}
\]
and hence $(\Theta-\ov{M(\lambda)})^{-1}\in\cB(\cG)$.
\end{proof}

In the assumptions of the next theorem we make use of the notation
\[
  \Theta^{-1}(X) \defeq \left\{x\in\cG\colon \exists\, y\in X
  \text{ so that }\binom{x}{y}\in\Theta\right\}
\]
for a linear relation $\Theta$ in $\cG$ and a subspace $X\subset\cG$.
This theorem gives a sufficient condition for $A_\Theta$ being self-adjoint.

% -------------------------------------------------------------------
\begin{theorem} \label{th.A_Th_self_adj}
Let $A$ be a closed symmetric relation in $\cH$ and let
$\{\cG,\Gamma_0,\Gamma_1\}$
be a quasi boundary triple for $A^*$
with $A_i=\ker\Gamma_i$, $i=0,1$, and Weyl function $M$.
Assume that $A_1$ is self-adjoint and that $\ov{M(\lambda_0)}\in\sS_\infty(\cG)$
for some
$\lambda_0\in\dC\backslash\dR$.
If $\Theta$ is a self-adjoint relation in $\cG$ such that
\begin{equation}\label{ass12}
0\notin\sess(\Theta) \qquad\text{and}\qquad
\Theta^{-1}\bigl(\ran\ov{M(\lambda_\pm)}\bigr) \subset \cG_0
\end{equation}
hold for some $\lambda_+\in\dC^+$ and some $\lambda_-\in\dC^-$,
then $A_\Theta=\{\hat f\in T\colon \Gamma\hat f\in \Theta\}$ is self-adjoint in
$\cH$.
In particular, the second condition in \eqref{ass12} is satisfied if
$\dom\Theta\subset\cG_0$.
\end{theorem}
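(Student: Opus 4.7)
The plan is to verify that $A_\Theta$ is symmetric and that $\ran(A_\Theta-\lambda_\pm)=\cH$, which then yields self-adjointness via the criterion recalled in Section~\ref{2.1}. Symmetry is immediate: since $\Theta=\Theta^*$ we have in particular $\Theta\subset\Theta^*$, whence $A_\Theta\subset A_\Theta^*$ by the implication noted after~\eqref{athetaop}. Consequently the points $\lambda_\pm\in\dC^\pm$ are not eigenvalues of $A_\Theta$, and they lie in $\rho(A_0)\cap\rho(A_1)$ since $A_0$ and $A_1$ are self-adjoint.

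Next, by Krein's formula (Theorem~\ref{th.krein}), the surjectivity of $A_\Theta-\lambda_\pm$ is equivalent to
\[
\gamma(\overline{\lambda_\pm})^*\cH \subset \ran\bigl(\Theta-M(\lambda_\pm)\bigr).
\]
Fix $\lambda=\lambda_+$; the case $\lambda_-$ is identical. By Proposition~\ref{pr.suff_cond}(iii) we have $\overline{M(\lambda)}\in\sS_\infty(\cG)$, and together with $0\notin\sess(\Theta)$ this allows an application of Lemma~\ref{pr.inv_bdd} to obtain $(\Theta-\overline{M(\lambda)})^{-1}\in\cB(\cG)$. Given $h\in\cH$, set $\varphi\defeq\gamma(\overline\lambda)^*h$; by Proposition~\ref{gammaprop}(ii), $\varphi\in\cG_1$. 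Since $\ran(\Theta-\overline{M(\lambda)})=\cG$, one can produce $x\in\dom\Theta$ and $y\in\cG$ with $(x,y)\in\Theta$ and $y-\overline{M(\lambda)}x=\varphi$.

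The crux is to upgrade $\overline{M(\lambda)}$ to $M(\lambda)$ by showing $x\in\cG_0$, so that $\overline{M(\lambda)}x=M(\lambda)x$ and $(x,\varphi)=(x,y-M(\lambda)x)$ genuinely belongs to the relation $\Theta-M(\lambda)$. For this I would invoke the self-adjointness of $A_1$: by Proposition~\ref{gammaprop}(iii), $\ran M(\lambda)=\cG_1$, so $\varphi\in\cG_1\subset\ran\overline{M(\lambda)}$. Hence $y=\varphi+\overline{M(\lambda)}x\in\ran\overline{M(\lambda)}$, and the second hypothesis in~\eqref{ass12} forces $x\in\Theta^{-1}(\ran\overline{M(\lambda)})\subset\cG_0$, completing the argument. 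The main obstacle is precisely this compatibility step: one must reconcile the bounded closure $\overline{M(\lambda)}$ (used in Lemma~\ref{pr.inv_bdd}) with the genuine operator $M(\lambda)$ appearing in Krein's formula, whose domain is only the dense subspace $\cG_0$; the second condition in~\eqref{ass12} is tailored to bridge this gap, once one observes that $\ran\gamma(\overline\lambda)^*\subset\cG_1=\ran M(\lambda)$. The final sentence of the theorem is immediate from the inclusion $\Theta^{-1}(X)\subset\dom\Theta$.
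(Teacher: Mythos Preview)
Your proof is correct and follows essentially the same route as the paper's: symmetry of $A_\Theta$ from $\Theta=\Theta^*$, reduction via Theorem~\ref{th.krein}(i) to showing $\cG_1\subset\ran(\Theta-M(\lambda_\pm))$, invertibility of $\Theta-\ov{M(\lambda)}$ from Lemma~\ref{pr.inv_bdd}, and then the upgrade $x\in\cG_0$ by observing that $y=\varphi+\ov{M(\lambda)}x\in\ran\ov{M(\lambda)}$ (using $\cG_1=\ran M(\lambda)$ from Proposition~\ref{gammaprop}(iii)) so that the second condition in~\eqref{ass12} applies. The only cosmetic difference is that the paper starts directly from an arbitrary $y\in\cG_1$ rather than $\varphi=\gamma(\ov\lambda)^*h$, and it does not separately invoke Proposition~\ref{pr.suff_cond}(iii) since Lemma~\ref{pr.inv_bdd} already covers all non-real $\lambda$.
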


\begin{proof}
Note first that $\Theta=\Theta^*$ implies that $A_\Theta$ is a symmetric
relation
in $\cH$ and hence the eigenvalues of $A_\Theta$ are real.
Therefore it remains to check that $\ran(A_\Theta-\lambda_\pm)=\cH$ holds
for some (and hence for all) points $\lambda_\pm\in\dC^\pm$.
Since $\ran\gamma(\ov\lambda_\pm)^*\subset\cG_1$ by
Proposition~\ref{gammaprop}(ii),
we find from Theorem~\ref{th.krein}(i) that it is sufficient to verify the
inclusion
\[
  \cG_1 \subset
\dom\bigl(\Theta-M(\lambda_\pm)\bigr)^{-1}
=\ran\bigl(\Theta-M(\lambda_\pm)\bigr).
\]
Let $y\in\cG_1$ and let $\lambda_+\in\dC^+$ be such that the
second relation in \eqref{ass12} holds.
For $\lambda_-\in\dC^-$ the same reasoning applies.
With $x\defeq(\Theta-\ov{M(\lambda_+)})^{-1}y$, which is well defined by
Lemma~\ref{pr.inv_bdd}, we have
\[
  \begin{pmatrix} x \\ y+\ov{M(\lambda_+)}x\end{pmatrix} \in \Theta.
\]
Since $A_1$ is self-adjoint, we have $\ran M(\lambda_+) = \cG_1$
by Proposition~\ref{gammaprop}(iii) and hence
\[
  y + \ov{M(\lambda_+)}x \in \cG_1 + \ran\ov{M(\lambda_+)} =
\ran\ov{M(\lambda_+)}.
\]
It follows from the second assumption in \eqref{ass12} that $x\in\cG_0 = \dom
M(\lambda_+)$.  Therefore
$\bigl(\begin{smallmatrix}x\\y\end{smallmatrix}\bigr)\in\Theta-M(\lambda_+)$,
which shows that $y\in\ran(\Theta-M(\lambda_+))$.
\end{proof}

% -------------------------------------------------------------------
\begin{remark}\label{remremrem}
If $\Theta$ is a self-adjoint relation with $0\notin\sess(\Theta)$, then its
kernel is finite-dimensional.  If $\ker\Theta=\{0\}$, then $B\defeq\Theta^{-1}$
is a bounded, self-adjoint operator in $\cG$.
In this case, the second condition in \eqref{ass12} becomes
\begin{equation*}
  B\bigl(\ran\ov{M(\lambda_\pm)}\bigr) \subset \cG_0
\end{equation*}
and the relation $A_\Theta$ can be written as $A_\Theta
=\ker(B\Gamma_1-\Gamma_0)$.
If $\ker\Theta\ne\{0\}$, then one can write the abstract boundary condition
$\Gamma \hat f\in\Theta$, $\hat f\in T\subset A^*$, with the finite rank
projection $P$ onto
$\ker\Theta$ and the bounded operator
\[
  B =\bigl(\Theta\cap\bigl((\ker\Theta)^\perp\times
(\ker\Theta)^\perp\bigr)\bigr)^{-1}
  \in\cB\bigl((\ker\Theta)^\perp\bigr)
\]
in the form
\[
  P\Gamma_1\hat f = 0 \quad\text{and}\quad (1-P)\Gamma_0\hat f
=B(1-P)\Gamma_1\hat f, \quad
  \hat f\in\dom\Gamma=T.
\]
\end{remark}

% *******************************************************************
\subsection{Resolvent differences in operator ideals}\label{3.3}

Let $A$ be a closed symmetric relation in a separable Hilbert space $\cH$,
let $\{\cG,\Gamma_0,\Gamma_1\}$ be a quasi boundary triple for $A^*$,
and let $\frA$ be a class of operator ideals.
With the help of Krein's formula we find sufficient conditions on the parameter
$\Theta$, the $\gamma$-field $\gamma$ and the Weyl function $M$ such that the difference
of the resolvents of the self-adjoint relations $A_\Theta$ and $A_0$ belongs to
some appropriate ideal, e.g.\ $\frA(\cH)$ or $(\frA\cdot\frA^*)(\cH)$.
These abstract results will turn out to be particularly useful in
Section~\ref{sec4} when we investigate Schatten--von~Neumann type properties of
resolvent differences of self-adjoint elliptic differential operators.

The first theorem of this subsection is one of the main results of the paper. Here
we consider the resolvent difference of $A_\Theta$ and $A_0$ under some
assumptions on $M(\lambda)$, $\gamma(\lambda)^*$ and $\Theta$.

% -------------------------------------------------------------------
\begin{theorem} \label{th.resolv_diff1}
Let $A$ be a closed symmetric relation in $\cH$ and let
$\{\cG,\Gamma_0,\Gamma_1\}$ be a quasi boundary triple for $A^*$
with $A_0=\ker\Gamma_0$, $\gamma$-field $\gamma$ and Weyl function $M$. Let
$\frA$ be a class of operator ideals and let $\Theta$ be a self-adjoint relation
in $\cG$ such that the following conditions hold:
\begin{itemize}
\item[(i)]
  $\ov{M(\lambda_0)}\in\sS_\infty(\cG)$ for some
  $\lambda_0\in\dC\backslash\dR$;
\item[(ii)]
  $\gamma(\lambda_1)^*\in\frA^*(\cH,\cG)$ for some $\lambda_1\in\rho(A_0)$;
\item[(iii)]
  $0\notin\sess(\Theta)$ and $A_\Theta=A_\Theta^*$.
\end{itemize}
Then
\begin{equation}\label{resdiffaa}
  (A_\Theta-\lambda)^{-1} - (A_0-\lambda)^{-1} \in
  (\frA\cdot\frA^*)(\cH)
\end{equation}
for all $\lambda\in\rho(A_\Theta)\cap\rho(A_0)$.
\end{theorem}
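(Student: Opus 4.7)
The plan is to apply Krein's formula from Corollary~\ref{cor.krein}(i) and read off the desired factorisation into $\frA\cdot\frA^*$ with the help of the ``propagation'' results of Proposition~\ref{pr.suff_cond}. Since $A_\Theta=A_\Theta^*$ by hypothesis (iii), we have $\CC\setminus\RR\subset\rho(A_\Theta)$, and Lemma~\ref{resdifflemma} reduces the task to verifying the conclusion at a single point; I will choose $\lambda\in\CC\setminus\RR$ so that the lemmas from Section~\ref{3.1} become applicable directly.

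First I would upgrade the two pointwise assumptions to assertions that hold uniformly on $\rho(A_0)$. Proposition~\ref{pr.suff_cond}(iii), valid because $\sS_\infty$ is a class of symmetrically normed ideals, turns~(i) into $\ov{M(\lambda)}\in\sS_\infty(\cG)$ for every $\lambda\in\rho(A_0)$, while Proposition~\ref{pr.suff_cond}(ii) turns~(ii) into $\gamma(\lambda)^*\in\frA^*(\cH,\cG)$ for every such $\lambda$. Taking adjoints and using that $\gamma(\lambda)$ is closable (so that $(\gamma(\lambda)^*)^*=\ov{\gamma(\lambda)}$), I also obtain $\ov{\gamma(\lambda)}\in\frA(\cG,\cH)$. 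Combined with $0\notin\sess(\Theta)$, Lemma~\ref{pr.inv_bdd} then yields $(\Theta-\ov{M(\lambda)})^{-1}\in\cB(\cG)$.

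The step I expect to be the main obstacle is recasting Krein's formula in a clean closed-operator form:
\[
  (A_\Theta-\lambda)^{-1}-(A_0-\lambda)^{-1}
  = \ov{\gamma(\lambda)}\,\bigl(\Theta-\ov{M(\lambda)}\bigr)^{-1}\gamma(\ov\lambda)^*.
\]
Starting from Corollary~\ref{cor.krein}(i), I would argue that for every $g\in\cH=\ran(A_\Theta-\lambda)$, Theorem~\ref{th.krein}(i) places $\gamma(\ov\lambda)^*g$ in $\dom(\Theta-M(\lambda))^{-1}$, where $(\Theta-M(\lambda))^{-1}$ coincides with the restriction of the bounded operator $(\Theta-\ov{M(\lambda)})^{-1}$, and the resulting vector lies in $\cG_0\subset\dom\gamma(\lambda)$, so that $\gamma(\lambda)$ and its closure agree on it. This justifies the displayed identity globally as an equality of bounded operators on $\cH$.

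Once this factorisation is established, the ideal property of $\frA^*$ gives $(\Theta-\ov{M(\lambda)})^{-1}\gamma(\ov\lambda)^*\in\frA^*(\cH,\cG)$, and composition on the left with $\ov{\gamma(\lambda)}\in\frA(\cG,\cH)$ places the resolvent difference in $(\frA\cdot\frA^*)(\cH)$. A final appeal to Lemma~\ref{resdifflemma} extends the conclusion to every $\lambda\in\rho(A_\Theta)\cap\rho(A_0)$. The delicate part throughout is the careful bookkeeping of domains: Krein's formula, as originally stated, involves the possibly unbounded relations $M(\lambda)$ and $(\Theta-M(\lambda))^{-1}$, and the passage to bounded-operator factors must be justified before the ideal machinery can be invoked.
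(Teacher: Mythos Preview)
Your proposal is correct and follows essentially the same route as the paper: upgrade (i) and (ii) via Proposition~\ref{pr.suff_cond}, invoke Lemma~\ref{pr.inv_bdd} for the boundedness of $(\Theta-\ov{M(\lambda)})^{-1}$, rewrite Krein's formula as $\ov{\gamma(\lambda)}(\Theta-\ov{M(\lambda)})^{-1}\gamma(\ov\lambda)^*$, read off the $\frA\cdot\frA^*$ factorisation, and extend to all of $\rho(A_\Theta)\cap\rho(A_0)$ with Lemma~\ref{resdifflemma}. Your explicit discussion of the passage from $\gamma(\lambda)(\Theta-M(\lambda))^{-1}$ to the closed-operator factorisation is more detailed than what the paper writes (there the equality is simply asserted in one display), but the underlying argument is the same.
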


\begin{proof}
Note that the assumptions (i) and (ii) together with
Proposition~\ref{pr.suff_cond} imply
that $\ov{M(\lambda)}\in\sS_\infty(\cG)$, $\gamma(\lambda)^*\in\frA^*(\cH,\cG)$
and
$\gamma(\lambda)^{**}=\overline{\gamma(\lambda)}\in\frA(\cG,\cH)$ for all
$\lambda\in\rho(A_0)$.
Corollary~\ref{cor.krein}(i) yields
that the resolvent difference of the self-adjoint relations $A_\Theta$ and $A_0$
has the form
\begin{equation} \label{clos.resdiff}
 \begin{split}
  (A_\Theta-\lambda)^{-1} - (A_0-\lambda)^{-1}
  &= \gamma(\lambda)\bigl(\Theta-M(\lambda)\bigr)^{-1}\gamma(\ov\lambda)^* \\
  &= \ov{\gamma(\lambda)}\bigl(\Theta-\ov{M(\lambda)}\bigr)^{-1}\gamma(\ov\lambda)^*
 \end{split}
\end{equation}
for all $\lambda\in\rho(A_\Theta)\cap\rho(A_0)$.
Furthermore, since the operator $\ov{M(\lambda_0)}$ is compact, we have
$(\Theta-\ov{M(\lambda)})^{-1}\in\cB(\cG)$ for all $\lambda\in\dC\backslash\dR$
by
Lemma~\ref{pr.inv_bdd}. Therefore, if $\lambda\in\dC\backslash\dR$, then
\begin{equation*}
\bigl(\Theta-\ov{M(\lambda)}\bigr)^{-1}\gamma(\ov\lambda)^*\in\frA^*(\cH,
\cG)\quad\text{and}
  \quad \ov{\gamma(\lambda)}\in\frA(\cG,\cH),
\end{equation*}
and hence \eqref{resdiffaa} follows.
Lemma~\ref{resdifflemma} implies that \eqref{resdiffaa} holds also for
all $\lambda$ in the (possibly larger) set $\rho(A_\Theta)\cap\rho(A_0)$.
\end{proof}

Note that Theorem~\ref{th.A_Th_self_adj} provides a sufficient condition for
the second assumption in (iii) of Theorem~\ref{th.resolv_diff1}.

\begin{remark}
As a corollary one immediately obtains the same result for the resolvent difference
\[
  (A_{\Theta_1}-\lambda)^{-1}-(A_{\Theta_2}-\lambda)^{-1}
\]
of $A_{\Theta_1}$, $A_{\Theta_2}$, where $\Theta_1$ and $\Theta_2$ both satisfy
the assumptions in Theorem~\ref{th.resolv_diff1}.
In Theorem~\ref{th.resolv_diff4} we improve this under the additional assumption
that $\Theta_1-\Theta_2$ is in some class of operator ideals.
\end{remark}

\begin{remark} \label{rem.resdiff_Sp}
If $\frA$ is equal to $\frS_p$, $\frS_{r,\infty}$ or
$\frS_{r,\infty}^{(0)}$, then the resolvent difference in
\eqref{resdiffaa} is in $\frS_{p/2}(\cH)$, $\frS_{2r,\infty}(\cH)$ or
$\frS_{2r,\infty}^{(0)}(\cH)$, respectively.  This follows from Lemma~\ref{splemma}.
\end{remark}

Krein's formula can be used to prove a trace formula if the resolvent difference
is a trace class operator.

\begin{corollary} \label{co.trace_formula}
Let $A$ be a closed symmetric relation in a separable Hilbert space $\cH$ and
let $\{\cG,\Gamma_0,\Gamma_1\}$ be a quasi boundary triple for $A^*$ with
$A_0=\ker\Gamma_0$, $\gamma$-field $\gamma$ and Weyl function $M$.
Further, let $\Theta$ be a self-adjoint relation in $\cG$ such that the following conditions hold:
\begin{itemize}
\item[(i)] $\ov{M(\lambda_0)}\in\frS_\infty(\cG)$ for some
$\lambda_0\in\dC\backslash\dR$;
\item[(ii)] $\gamma(\lambda_1)^*\in\frS_2(\cH,\cG)$ for some $\lambda_1\in\rho(A_0)$;
\item[(iii)] $0\notin\sess(\Theta)$ and $A_\Theta=A_\Theta^*$.
\end{itemize}
Then
\[
  (A_\Theta-\lambda)^{-1}-(A_0-\lambda)^{-1} \in \frS_1(\cH)
\]
and
\[
  \tr\bigl((A_\Theta-\lambda)^{-1}-(A_0-\lambda)^{-1}\bigr)
  = \tr\Bigl(\ov{M'(\lambda)}\bigl(\Theta-\ov{M(\lambda)}\bigr)^{-1}\Bigr)
\]
for $\lambda\in\rho(A_\Theta)\cap\rho(A_0)$,
where $\ov{M'(\lambda)}$ is defined as in Remark~\ref{re.deriv_M}.
\end{corollary}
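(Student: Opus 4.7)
The plan splits naturally into two parts: first establish trace class membership via Theorem~\ref{th.resolv_diff1}, then extract the trace identity by applying cyclicity of the trace to Krein's formula and invoking the identification $\gamma(\ov\lambda)^*\,\ov{\gamma(\lambda)} = \ov{M'(\lambda)}$ from Remark~\ref{re.deriv_M}.

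For the first part I would apply Theorem~\ref{th.resolv_diff1} with $\frA = \frS_2$. Since $\frS_2^* = \frS_2$, hypothesis (ii) of the theorem matches hypothesis (ii) here, and the other assumptions are identical. Lemma~\ref{splemma}(v) gives $\frS_2\cdot\frS_2 = \frS_1$, so the resolvent difference lies in $\frS_1(\cH)$ for every $\lambda\in\rho(A_\Theta)\cap\rho(A_0)$.

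For the trace identity I would start from the factorisation \eqref{clos.resdiff} in the proof of Theorem~\ref{th.resolv_diff1},
\[
  (A_\Theta-\lambda)^{-1}-(A_0-\lambda)^{-1} = \ov{\gamma(\lambda)}\bigl(\Theta-\ov{M(\lambda)}\bigr)^{-1}\gamma(\ov\lambda)^*.
\]
By Proposition~\ref{pr.suff_cond}(i)--(ii) (using $\frS_2^* = \frS_2$) one has $\ov{\gamma(\lambda)}\in\frS_2(\cG,\cH)$ and $\gamma(\ov\lambda)^*\in\frS_2(\cH,\cG)$, while Lemma~\ref{pr.inv_bdd} ensures $(\Theta-\ov{M(\lambda)})^{-1}\in\cB(\cG)$ for non-real $\lambda$. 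Writing $R = \ov{\gamma(\lambda)}$ and $S = (\Theta-\ov{M(\lambda)})^{-1}\gamma(\ov\lambda)^*$, both $R$ and $S$ are Hilbert--Schmidt, hence $RS\in\frS_1(\cH)$ and $SR\in\frS_1(\cG)$ with $\tr(RS) = \tr(SR)$. Combining this with Remark~\ref{re.deriv_M}, which gives $\gamma(\ov\lambda)^*\ov{\gamma(\lambda)} = \ov{M'(\lambda)}\in\frS_1(\cG)$, and one further cyclic permutation (valid because $\ov{M'(\lambda)}$ is trace class and $(\Theta-\ov{M(\lambda)})^{-1}$ is bounded), the stated formula follows on $\dC\backslash\dR$.

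The one delicate point is extending the identity from non-real $\lambda$ to the full set $\rho(A_\Theta)\cap\rho(A_0)$, since Lemma~\ref{pr.inv_bdd} is only stated for $\lambda\in\dC\backslash\dR$. I would handle this by holomorphy: the left-hand side is holomorphic in $\frS_1$-norm on each connected component of $\rho(A_\Theta)\cap\rho(A_0)$ (transporting the $\frS_1$ information via Lemma~\ref{resdifflemma} and noting that the trace is continuous on $\frS_1$), while the right-hand side is holomorphic wherever $(\Theta-\ov{M(\lambda)})^{-1}\in\cB(\cG)$, which is an open condition containing $\dC\backslash\dR$. The identity principle then propagates equality throughout the relevant components of $\rho(A_\Theta)\cap\rho(A_0)$.
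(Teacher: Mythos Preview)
Your proposal is correct and follows essentially the same route as the paper: invoke Theorem~\ref{th.resolv_diff1} with $\frA=\frS_2$ (so $\frA\cdot\frA^*=\frS_1$) for the trace class statement, then take traces in the factorised Krein formula \eqref{clos.resdiff}, use cyclicity $\tr(AB)=\tr(BA)$, and substitute $\gamma(\ov\lambda)^*\ov{\gamma(\lambda)}=\ov{M'(\lambda)}$ from Remark~\ref{re.deriv_M}. The only difference is that you add an explicit analytic continuation argument to pass from $\lambda\in\dC\backslash\dR$ to real $\lambda\in\rho(A_\Theta)\cap\rho(A_0)$, whereas the paper simply asserts \eqref{clos.resdiff} (with closures) for all such $\lambda$ and computes directly; your extra care here is reasonable but not a genuinely different strategy.
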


\begin{proof}
The first assertion is clear from Theorem~\ref{th.resolv_diff1}
and Remark~\ref{rem.resdiff_Sp}.  Hence we can apply the trace to both sides
of \eqref{clos.resdiff}.
Using \eqref{deriv_closM} and the relation $\tr(AB)=\tr(BA)$ (see, e.g.\ \cite[Theorem~III.8.2]{GK69})
we obtain
\begin{align}
  &\tr\bigl((A_\Theta-\lambda)^{-1}-(A_0-\lambda)^{-1}\bigr)
  = \tr\Bigl(\ov{\gamma(\lambda)}\bigl(\Theta-\ov{M(\lambda)}\bigr)^{-1}
    \gamma(\ov\lambda)^*\Bigr)
  \notag \\[1ex]
  &= \tr\Bigl(\gamma(\ov\lambda)^*\ov{\gamma(\lambda)}\bigl(\Theta-\ov{M(\lambda)}\bigr)^{-1}\Bigr)
  \label{83827} \\[1ex]
  &= \tr\Bigl(\ov{M'(\lambda)}\bigl(\Theta-\ov{M(\lambda)}\bigr)^{-1}\Bigr); \notag
\end{align}
note that also the operator $\ov{M'(\lambda)}(\Theta-\ov{M(\lambda)})^{-1}$ in \eqref{83827} is a trace class operator.
\end{proof}

In the following theorem the assumptions $\ov{M(\lambda_0)}\in\frS_\infty(\cG)$,
$0\notin\sess(\Theta)$ are replaced by a weaker assumption on
$\Theta-M(\lambda)$;
the conclusion is also weaker than the one in Theorem~\ref{th.resolv_diff1}.

% -------------------------------------------------------------------
\begin{theorem} \label{th.resolv_diff2}
Let $A$ be a closed symmetric relation in $\cH$ and let
$\{\cG,\Gamma_0,\Gamma_1\}$ be a quasi boundary triple for $A^*$
with $A_0=\ker\Gamma_0$, $\gamma$-field $\gamma$ and Weyl function $M$. Let
$\frA$ be a class of operator ideals and let $\Theta$ be a symmetric relation
in $\cG$ such that the following conditions hold:
\begin{itemize}
\item[(i)] $\ov{\Theta-M(\lambda_0)}$ is injective for some
$\lambda_0\in\dC\backslash\dR$;
\item[(ii)] $\gamma(\lambda_1)^*\in\frA^*(\cH,\cG)$ for some $\lambda_1\in\rho(A_0)$
\item[(iii)] $A_\Theta=A_\Theta^*$.
\end{itemize}
Then
\begin{equation}\label{resdiffaaa}
  (A_\Theta-\lambda)^{-1} - (A_0-\lambda)^{-1} \in \frA(\cH)
\end{equation}
for all $\lambda\in\rho(A_\Theta)\cap\rho(A_0)$.
\end{theorem}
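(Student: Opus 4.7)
The plan is to apply Krein's formula from Corollary~\ref{cor.krein}(i) and then to exhibit a factorization of the resolvent difference as a composition of an operator in $\frA$ with a bounded operator, the boundedness of the latter being forced by assumption~(i) via a closed-graph argument.

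Since both $A_\Theta$ and $A_0$ are self-adjoint, their spectra lie in $\RR$, so the non-real point $\lambda_0$ from assumption~(i) satisfies $\lambda_0 \in \rho(A_\Theta) \cap \rho(A_0)$ and is in particular not an eigenvalue of $A_\Theta$. Hence $(\Theta - M(\lambda_0))^{-1}$ is an operator, and Corollary~\ref{cor.krein}(i) gives
\[
R_{\lambda_0} := (A_\Theta - \lambda_0)^{-1} - (A_0 - \lambda_0)^{-1}
= \gamma(\lambda_0)\bigl(\Theta - M(\lambda_0)\bigr)^{-1}\gamma(\ov{\lambda_0})^{*} \in \cB(\cH).
\]
Assumption~(ii) together with Proposition~\ref{pr.suff_cond}(ii) yields $\gamma(\lambda)^{*} \in \frA^{*}(\cH,\cG)$, equivalently $\ov{\gamma(\lambda)} \in \frA(\cG,\cH)$, for every $\lambda \in \rho(A_0)$; in particular for $\lambda = \lambda_0$.

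The central step will be to show that the operator
\[
F := \bigl(\Theta - M(\lambda_0)\bigr)^{-1}\gamma(\ov{\lambda_0})^{*} \colon \cH \to \cG
\]
is everywhere defined and bounded. Everywhere-definedness follows from Theorem~\ref{th.krein}(i) together with the equality $\ran(A_\Theta - \lambda_0) = \cH$ (a consequence of self-adjointness of $A_\Theta$). For boundedness I would invoke the closed graph theorem: if $g_n \to 0$ in $\cH$ and $Fg_n \to y$ in $\cG$, then setting $u_n := \gamma(\ov{\lambda_0})^{*} g_n$ and $x_n := F g_n$ one has $(x_n, u_n) \in \Theta - M(\lambda_0)$, while $u_n \to 0$ by boundedness of $\gamma(\ov{\lambda_0})^{*}$ and $x_n \to y$ by assumption. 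Passing to the closure of the relation yields $(y,0) \in \ov{\Theta - M(\lambda_0)}$, and assumption~(i) then forces $y = 0$.

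Once $F \in \cB(\cH, \cG)$ is established, the identity $R_{\lambda_0} = \ov{\gamma(\lambda_0)}\, F$ holds because $\ran F \subset \cG_0 = \dom\gamma(\lambda_0)$, on which $\gamma(\lambda_0)$ agrees with its bounded extension $\ov{\gamma(\lambda_0)}$. The ideal property from Definition~\ref{def.class_op_ideal}(iii) then gives $R_{\lambda_0} \in \frA(\cH)$, and Lemma~\ref{resdifflemma} propagates the conclusion to all $\lambda \in \rho(A_\Theta) \cap \rho(A_0)$. The main obstacle is the closed-graph step: one must interpret the injectivity of the closure $\ov{\Theta - M(\lambda_0)}$ correctly as a property of pairs in $\cG \times \cG$, ensuring that the limit $(y,0)$ of graph elements forces $y = 0$. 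Apart from this, the proof reduces to a direct manipulation of Krein's formula combined with the ideal-propagation result Proposition~\ref{pr.suff_cond}(ii).
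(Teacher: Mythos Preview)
Your proof is correct and follows essentially the same route as the paper's: both apply Krein's formula at the non-real point $\lambda_0$, use the injectivity assumption~(i) to conclude that $(\Theta-M(\lambda_0))^{-1}\gamma(\ov{\lambda_0})^*$ is a closable everywhere-defined (hence bounded) operator, then multiply by $\ov{\gamma(\lambda_0)}\in\frA(\cG,\cH)$ and invoke Lemma~\ref{resdifflemma}. The only difference is cosmetic---the paper phrases the boundedness step as ``closable operator composed with a bounded operator is closable, hence bounded'' while you spell out the closed-graph argument with sequences.
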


\begin{proof}
According to Corollary~\ref{cor.krein}(i) we can write the resolvent difference
at
the point $\lambda_0\in\rho(A_\Theta)\cap\rho(A_0)$ as
\begin{equation*}
  (A_\Theta-\lambda_0)^{-1}-(A_0-\lambda_0)^{-1}
  =
\overline{\gamma(\lambda_0)}\bigl(\Theta-M(\lambda_0)\bigr)^{-1}
\gamma(\ov\lambda_0)^*.
\end{equation*}
In particular, it follows that both products on the right-hand side are
well defined, and hence
\begin{equation}\label{e2}
\bigl(\Theta-M(\lambda_0)\bigr)^{-1}\gamma(\ov\lambda_0)^*
\end{equation}
is everywhere defined. Since the relation $\ov{\Theta-M(\lambda_0)}$ is
injective, it follows
that $(\Theta-M(\lambda_0))^{-1}$ is a closable operator.
Therefore, because $\gamma(\ov\lambda_0)^*$ is a bounded operator, the product
in \eqref{e2} is a closable, everywhere defined operator and hence in
$\cB(\cH,\cG)$.
Moreover, since $\gamma(\lambda_1)^*\in\frA^*(\cH,\cG)$, it follows from
Proposition~\ref{pr.suff_cond} that $\ov{\gamma(\lambda)}$ belongs to
$\frA(\cG,\cH)$ for all $\lambda\in\rho(A_0)$.
Hence the difference of the resolvents in \eqref{resdiffaaa} is in
$\frA(\cH)$ for $\lambda=\lambda_0$.
Then Lemma~\ref{resdifflemma} implies
\eqref{resdiffaaa} for all $\lambda\in\rho(A_\Theta)\cap\rho(A_0)$.
\end{proof}

In the case $\Theta=0$ the above theorem together with Lemma~\ref{pr.Mpos} imply
the next
corollary.

% -------------------------------------------------------------------
\begin{corollary} \label{co.resolv_diff10}
Let $A$, $\{\cG,\Gamma_0,\Gamma_1\}$, $\gamma$, $M$ and $\frA$ be as in
Theorem~\ref{th.resolv_diff2}.
Assume that $A_1=\ker\Gamma_1$ is self-adjoint, that $M(\lambda_0)$ is bounded
for some $\lambda_0\in\rho(A_0)$ and that $\gamma(\lambda_1)^*\in\frA^*(\cH,\cG)$
for some $\lambda_1\in\rho(A_0)$. Then
\[
  (A_1-\lambda)^{-1} - (A_0-\lambda)^{-1} \in \frA(\cH)
\]
for all $\lambda\in\rho(A_1)\cap\rho(A_0)$
\end{corollary}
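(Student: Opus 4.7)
The plan is to apply Theorem~\ref{th.resolv_diff2} with the parameter $\Theta=0$ (the zero operator in $\cG$), since for this choice of $\Theta$ the extension $A_\Theta$ coincides with $A_1=\ker\Gamma_1$. Indeed, writing $A_\Theta=\ker(\Gamma_1-\Theta\Gamma_0)$ as in \eqref{athetaop}, the choice $\Theta=0$ gives $A_\Theta=\ker\Gamma_1=A_1$. Thus the three hypotheses of Theorem~\ref{th.resolv_diff2} need to be verified for $\Theta=0$.

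First, hypothesis (iii), namely $A_\Theta=A_\Theta^*$, is immediate from the assumption that $A_1$ is self-adjoint. Second, hypothesis (ii), $\gamma(\lambda_1)^*\in\frA^*(\cH,\cG)$, is part of the standing assumptions of the corollary.

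The only point requiring an argument is hypothesis (i): we must produce some $\lambda_0\in\CC\setminus\RR$ for which the closure $\ov{\Theta-M(\lambda_0)}=-\ov{M(\lambda_0)}$ is injective. This is precisely where Lemma~\ref{pr.Mpos} enters. Since $M(\lambda_0)$ is bounded for some $\lambda_0\in\rho(A_0)$ by hypothesis, the lemma guarantees that $\ov{M(\lambda)}$ is defined and bounded for every $\lambda\in\rho(A_0)$, and, more importantly, that $\ker\ov{M(\lambda)}=\{0\}$ for all $\lambda\in\CC\setminus\RR$. Picking any such non-real $\lambda_0\in\rho(A_0)$, we obtain injectivity of $\ov{-M(\lambda_0)}$, which is hypothesis (i).

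With all three hypotheses verified, Theorem~\ref{th.resolv_diff2} gives
\[
  (A_1-\lambda)^{-1}-(A_0-\lambda)^{-1}\in\frA(\cH)
  \qquad\text{for all }\lambda\in\rho(A_1)\cap\rho(A_0),
\]
which is the desired conclusion. The argument is essentially a direct specialisation, and there is no genuine obstacle beyond ensuring that the boundedness of $M(\lambda_0)$ is enough to trigger the injectivity statement from Lemma~\ref{pr.Mpos}; this is where the assumption on $M(\lambda_0)$ rather than merely on $\gamma(\lambda_1)^*$ is crucial.
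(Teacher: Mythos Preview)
Your proof is correct and follows exactly the approach indicated in the paper: the corollary is obtained from Theorem~\ref{th.resolv_diff2} by taking $\Theta=0$, with Lemma~\ref{pr.Mpos} supplying the injectivity of $\ov{M(\lambda_0)}$ for non-real $\lambda_0$ needed in hypothesis~(i). The paper states this in one line before the corollary, and you have simply spelled out the verification of the three hypotheses in detail.
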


Corollary~\ref{co.resolv_diff10} can be generalized as follows.

% -------------------------------------------------------------------
\begin{theorem} \label{th.resolv_diff3}
Let $A$ be a closed symmetric relation in $\cH$ and let
$\{\cG,\Gamma_0,\Gamma_1\}$ be a quasi boundary triple for $A^*$
with $A_0=\ker\Gamma_0$, $\gamma$-field $\gamma$ and Weyl function $M$.
Furthermore, let $\frA$, $\frB$ be classes of operator ideals and assume that
the following conditions hold:
\begin{itemize}
\item[(i)]
  $A_1=\ker\Gamma_1$ is self-adjoint;
\item[(ii)]
  $M(\lambda_0)$ is bounded for some $\lambda_0\in\rho(A_0)$;
\item[(iii)]
  $\gamma(\lambda_1)^*\in\frA^*(\cH,\cG)$ for some $\lambda_1\in\rho(A_0)$;
\item[(iv)]
  there exists a Hilbert space $\wt\cG_0$ such that
  $\cG_0\subset\wt\cG_0\subset\cG$ and the embedding $\iota_{\wt\cG_0\to\cG}$
  belongs to $\frB(\widetilde\cG_0,\cG)$.
\end{itemize}
Then
\begin{equation}\label{resdiffb}
  (A_1-\lambda)^{-1} - (A_0-\lambda)^{-1} \in
  (\frA\cdot\frB)(\cH)
\end{equation}
for all $\lambda\in\rho(A_1)\cap\rho(A_0)$.
\end{theorem}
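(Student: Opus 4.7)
The plan is to apply Krein's formula at a non-real point $\lambda$, factor the resolvent difference as a product of an operator in $\frA$ with an operator in $\frB$, and then propagate the conclusion to the whole of $\rho(A_1)\cap\rho(A_0)$ by Lemma~\ref{resdifflemma}. The extension $A_1=\ker\Gamma_1$ is $A_\Theta$ for $\Theta=0$ (the zero operator) in the parametrisation \eqref{atheta}--\eqref{athetaop}. Since $A_1=A_1^*$ and $\lambda\in\rho(A_1)$, any $x\in\cG_0$ with $M(\lambda)x=0$ would give $\gamma(\lambda)x\in\ker(A_1-\lambda)=\{0\}$, hence $x=0$; combined with Proposition~\ref{gammaprop}(iii) this shows that $M(\lambda)\colon\cG_0\to\cG_1$ is a bijection. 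Corollary~\ref{cor.krein}(i) therefore gives the operator identity
\[
  (A_1-\lambda)^{-1}-(A_0-\lambda)^{-1}=-\gamma(\lambda)\,M(\lambda)^{-1}\,\gamma(\ov\lambda)^{*}
\]
on $\cH$, in which by Proposition~\ref{gammaprop}(ii) $\ran\gamma(\ov\lambda)^{*}\subset\cG_1=\dom M(\lambda)^{-1}$, so the operator $S\defeq M(\lambda)^{-1}\gamma(\ov\lambda)^{*}$ is well-defined on all of $\cH$ with values in $\cG_0\subset\widetilde{\cG}_0$.

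The key step is to show that $S\colon\cH\to\widetilde{\cG}_0$ is bounded. First, assumption (ii) together with Proposition~\ref{pr.Mpos} yields $\ov{M(\lambda)}\in\cB(\cG)$ for every $\lambda\in\rho(A_0)$, and for $\lambda\in\CC\setminus\RR$ Lemma~\ref{pr.Mpos} gives $\ker\ov{M(\lambda)}=\{0\}$. I would then apply the closed graph theorem: if $x_n\to x$ in $\cH$ and $Sx_n\to y$ in $\widetilde{\cG}_0$, then $Sx_n\to y$ also in $\cG$ by continuity of the embedding $\iota_{\widetilde{\cG}_0\to\cG}$, and applying the bounded operator $\ov{M(\lambda)}$ produces
\[
  \gamma(\ov\lambda)^{*}x_n=\ov{M(\lambda)}\,Sx_n\longrightarrow\ov{M(\lambda)}\,y.
\]
On the other hand $\gamma(\ov\lambda)^{*}x_n\to\gamma(\ov\lambda)^{*}x=\ov{M(\lambda)}\,Sx$, so injectivity of $\ov{M(\lambda)}$ forces $y=Sx$. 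Consequently $S\in\cB(\cH,\widetilde{\cG}_0)$.

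With this boundedness in hand, assumption (iv) and the ideal property of $\frB$ give $\iota_{\widetilde{\cG}_0\to\cG}\circ S\in\frB(\cH,\cG)$; assumption (iii) and Proposition~\ref{pr.suff_cond}(i)--(ii) give $\ov{\gamma(\lambda)}\in\frA(\cG,\cH)$. Therefore
\[
  (A_1-\lambda)^{-1}-(A_0-\lambda)^{-1}=-\ov{\gamma(\lambda)}\circ\bigl(\iota_{\widetilde{\cG}_0\to\cG}\circ S\bigr)\in(\frA\cdot\frB)(\cH),
\]
which is the desired conclusion for every $\lambda\in\CC\setminus\RR$. Lemma~\ref{resdifflemma} then propagates the membership in $(\frA\cdot\frB)(\cH)$ to all of $\rho(A_1)\cap\rho(A_0)$. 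The main obstacle is the closed-graph step: it relies on the injectivity of the closure $\ov{M(\lambda)}$, which is only automatic for non-real $\lambda$ via Lemma~\ref{pr.Mpos}; that is precisely why the argument is first carried out off the real line and afterwards extended by means of Lemma~\ref{resdifflemma}.
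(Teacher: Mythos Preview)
Your proof is correct and follows essentially the same route as the paper: apply Krein's formula with $\Theta=0$ at a non-real $\lambda$, use injectivity of $\ov{M(\lambda)}$ from Lemma~\ref{pr.Mpos} to conclude that $M(\lambda)^{-1}\gamma(\ov\lambda)^*$ is bounded into $\wt\cG_0$, factor the resolvent difference as $\ov{\gamma(\lambda)}\in\frA$ times $\iota_{\wt\cG_0\to\cG}M(\lambda)^{-1}\gamma(\ov\lambda)^*\in\frB$, and extend to all of $\rho(A_1)\cap\rho(A_0)$ via Lemma~\ref{resdifflemma}. The only cosmetic difference is that the paper phrases the boundedness step as ``closable and everywhere defined, hence bounded'' while you spell out the closed-graph argument explicitly; these are the same.
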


\begin{proof}
Since $M(\lambda_0)$ is bounded, Lemma~\ref{pr.Mpos} implies that
$\ker\ov{M(\lambda)} = \{0\}$ for every $\lambda\in\CC\backslash\RR$ and
hence $M(\lambda)^{-1}\gamma(\ov\lambda)^*$ is
closable from $\cH$ into $\cG$ with values in $\cG_0=\dom M(\lambda)$. The
boundedness of the embedding
$\iota_{\wt\cG_0\to\cG}$ implies that $M(\lambda)^{-1}\gamma(\ov\lambda)^*$
regarded as an operator
from $\cH$ into $\wt\cG_0$ is
also closable. Furthermore, this operator is everywhere defined and hence we
have
\[
  M(\lambda)^{-1}\gamma(\ov\lambda)^*\in\cB(\cH,\widetilde\cG_0)\quad\text{and}
  \quad
  \iota_{\wt\cG_0\to\cG}M(\lambda)^{-1}
  \gamma(\ov\lambda)^*\in\frB(\cH,\cG)
\]
for all $\lambda\in\dC\backslash\dR$ by assumption (iv).
Assumption (iii) implies
$\ov{\gamma(\lambda)}\in\frA(\cG,\cH)$ for all $\lambda\in\rho(A_0)$; cf.\
Proposition~\ref{pr.suff_cond}(i).
By the self-adjointness of $A_1$ and by Corollary~\ref{cor.krein} we have
\[
  (A_1-\lambda)^{-1} - (A_0-\lambda)^{-1}
  = -\ov{\gamma(\lambda)}
\iota_{\wt\cG_0\to\cG}M(\lambda)^{-1}\gamma(\ov\lambda)^*,
\]
which is in $(\frA\cdot\frB)(\cH)$ for all
$\lambda\in\dC\backslash\dR$.
An application of Lemma~\ref{resdifflemma} shows that \eqref{resdiffb} holds
also for all
$\lambda\in\rho(A_1)\cap\rho(A_0)$.
\end{proof}

In the next theorem the difference of the resolvents of two self-adjoint
extensions $A_{\Theta_1}$ and $A_{\Theta_2}$
is considered under additional assumptions on $\Theta_1-\Theta_2$; cf.\
\cite[Theorem~2 and Corollary~4]{DM91} for
the case that $\{\cG,\Gamma_0,\Gamma_1\}$ is an ordinary boundary triple.

% -------------------------------------------------------------------
\begin{theorem} \label{th.resolv_diff4}
Let $A$ be a closed symmetric relation in $\cH$ and let
$\{\cG,\Gamma_0,\Gamma_1\}$ be a quasi boundary triple for $A^*$
with $A_0=\ker\Gamma_0$, $\gamma$-field $\gamma$ and Weyl function $M$. Let
$\frB$ be a class of operator ideals,
let $\Theta_1$ and $\Theta_2$ be two self-adjoint bounded operators in $\cG$
and assume that the following conditions hold:
\begin{itemize}
  \item[(i)] $\overline{M(\lambda_0)}\in\sS_\infty(\cG)$ for some
    $\lambda_0\in\rho(A_0)$;
  \item[(ii)] $0\notin\sess(\Theta_i)$ and $A_{\Theta_i}=A_{\Theta_i}^*$ for $i=1,2$;
  \item[(iii)] $\Theta_1-\Theta_2\in\frB(\cG)$.
\end{itemize}
Then
\[
  (A_{\Theta_1}-\lambda)^{-1} - (A_{\Theta_2}-\lambda)^{-1} \in \frB(\cH)
\]
holds for all $\lambda\in\rho(A_{\Theta_1})\cap\rho(A_{\Theta_2})$.
If, in addition, $\frA$ is another class of operator ideals and
$\gamma(\lambda_1)^*\in\frA^*(\cH,\cG)$ for some $\lambda_1\in\rho(A_0)$, then
\[
  (A_{\Theta_1}-\lambda)^{-1} - (A_{\Theta_2}-\lambda)^{-1} \in
  (\frA\cdot\frB\cdot\frA^*)(\cH)
\]
for all $\lambda\in\rho(A_{\Theta_1})\cap\rho(A_{\Theta_2})$.
\end{theorem}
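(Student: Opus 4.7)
The plan is to invoke Krein's formula from Corollary~\ref{cor.krein}(i) for both $A_{\Theta_1}$ and $A_{\Theta_2}$ and then subtract, reducing the problem to a second-resolvent identity at the boundary-space level. First I would note that, since $\sS_\infty$ is a class of symmetrically normed ideals, Proposition~\ref{pr.suff_cond}(iii) propagates the compactness of $\overline{M(\lambda_0)}$ to every point of $\rho(A_0)$ (passing through a non-real point if necessary via Proposition~\ref{gammaprop}(v)). Combined with assumption (ii), Lemma~\ref{pr.inv_bdd} then guarantees that the operators $(\Theta_i - \overline{M(\lambda)})^{-1}$ are in $\cB(\cG)$ for each $\lambda\in\CC\setminus\RR$ and $i=1,2$. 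The closed form of Krein's formula from equation~\eqref{clos.resdiff} (derived in the proof of Theorem~\ref{th.resolv_diff1}) then reads
\[
  (A_{\Theta_i}-\lambda)^{-1} - (A_0-\lambda)^{-1} = \overline{\gamma(\lambda)}\bigl(\Theta_i - \overline{M(\lambda)}\bigr)^{-1}\gamma(\overline\lambda)^*
\]
for $\lambda\in\rho(A_{\Theta_i})\cap\rho(A_0)$.

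Next I would subtract the two identities to obtain
\[
  (A_{\Theta_1}-\lambda)^{-1} - (A_{\Theta_2}-\lambda)^{-1}
  = \overline{\gamma(\lambda)}\Bigl[\bigl(\Theta_1-\overline{M(\lambda)}\bigr)^{-1} - \bigl(\Theta_2-\overline{M(\lambda)}\bigr)^{-1}\Bigr]\gamma(\overline\lambda)^*,
\]
and apply the elementary algebraic identity
\[
  \bigl(\Theta_1-\overline{M(\lambda)}\bigr)^{-1} - \bigl(\Theta_2-\overline{M(\lambda)}\bigr)^{-1}
  = \bigl(\Theta_1-\overline{M(\lambda)}\bigr)^{-1}(\Theta_2-\Theta_1)\bigl(\Theta_2-\overline{M(\lambda)}\bigr)^{-1},
\]
which is legitimate precisely because both flanking inverses now live in $\cB(\cG)$. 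By assumption (iii), the middle factor $\Theta_2-\Theta_1$ belongs to $\frB(\cG)$, so the ideal property in Definition~\ref{def.class_op_ideal}(iii) places the whole bracket in $\frB(\cG)$.

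To finish the first assertion, I would sandwich this bracket between $\overline{\gamma(\lambda)}\in\cB(\cG,\cH)$ and $\gamma(\overline\lambda)^*\in\cB(\cH,\cG)$ (the latter from Proposition~\ref{gammaprop}(ii)); the ideal property then puts the resolvent difference in $\frB(\cH)$ for every $\lambda\in\CC\setminus\RR$, and Lemma~\ref{resdifflemma} extends this membership to all of $\rho(A_{\Theta_1})\cap\rho(A_{\Theta_2})$. For the second assertion, the extra hypothesis $\gamma(\lambda_1)^*\in\frA^*(\cH,\cG)$ is promoted by Proposition~\ref{pr.suff_cond}(i)--(ii) to give $\overline{\gamma(\lambda)}\in\frA(\cG,\cH)$ and $\gamma(\overline\lambda)^*\in\frA^*(\cH,\cG)$ for every $\lambda\in\rho(A_0)$; inserting these into the same sandwich shows membership in $\frA\cdot\frB\cdot\frA^*$, and Lemma~\ref{resdifflemma} propagates the conclusion to the full resolvent intersection.

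I do not anticipate a serious obstacle here: the proof is a mechanical combination of Krein's formula, the second resolvent identity for bounded operators, and the operator-ideal calculus. The only subtle point is notational, namely that one must work from the outset with the closure $\overline{M(\lambda)}$ rather than the possibly unbounded $M(\lambda)$, so that the bracketed identity is one between genuine elements of $\cB(\cG)$; this is exactly what Lemma~\ref{pr.inv_bdd} and the closed version \eqref{clos.resdiff} of Krein's formula are designed to supply.
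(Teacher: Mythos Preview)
Your proposal is correct and follows essentially the same route as the paper's proof: subtract the two Krein formulae, apply the second-resolvent identity $(\Theta_1-\ov{M(\lambda)})^{-1}-(\Theta_2-\ov{M(\lambda)})^{-1}=(\Theta_1-\ov{M(\lambda)})^{-1}(\Theta_2-\Theta_1)(\Theta_2-\ov{M(\lambda)})^{-1}$, use the ideal property to land in $\frB(\cH)$ (resp.\ $\frA\cdot\frB\cdot\frA^*$), and invoke Lemma~\ref{resdifflemma} to pass from non-real $\lambda$ to the full resolvent intersection. Your explicit remark that one must work with the closures $\ov{M(\lambda)}$ so that Lemma~\ref{pr.inv_bdd} applies is exactly the point the paper handles implicitly via \eqref{clos.resdiff}.
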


\begin{proof}
Because $\Theta_i$ and $(\Theta_i-\ov{M(\lambda)})^{-1}$ are bounded for all
$\lambda\in\dC\backslash\dR$ by
Lemma~\ref{pr.inv_bdd}, the difference of \eqref{krein2} for $\Theta=\Theta_1$
and $\Theta=\Theta_2$ can be rewritten as follows
\begin{equation*}
 \begin{split}
  &(A_{\Theta_1}-\lambda)^{-1} - (A_{\Theta_2}-\lambda)^{-1}\\
  &\qquad=
\ov{\gamma(\lambda)}\bigl(\Theta_1-\ov{M(\lambda)}\bigr)^{-1}(\Theta_2-\Theta_1)
  \bigl(\Theta_2-\ov{M(\lambda)}\bigr)^{-1}\gamma(\ov\lambda)^*.
 \end{split}
\end{equation*}
All five factors on the right-hand side are bounded, the middle factor is in
$\frB(\cG)$;
hence the product is in $\frB(\cH)$ for $\lambda\in\dC\backslash\dR$,
and Lemma~\ref{resdifflemma} implies
that this is true for all $\lambda\in\rho(A_{\Theta_1})\cap\rho(A_{\Theta_2})$.
If, in addition, $\gamma(\lambda_1)^*\in\frA^*(\cH,\cG)$, then
$\gamma(\ov\lambda)^*\in\frA^*(\cH,\cG)$
and
$\ov{\gamma(\lambda)}\in\frA(\cG,\cH)$ for all $\lambda\in\rho(A_0)$ by
Proposition~\ref{pr.suff_cond}(ii) and
hence the second assertions holds for all
$\lambda\in\rho(A_{\Theta_1})\cap\rho(A_{\Theta_2})\cap\rho(A_0)$.
It remains to use the argument in the proof of Lemma~\ref{resdifflemma} to
conclude the assertion
for all $\lambda\in\rho(A_{\Theta_1})\cap\rho(A_{\Theta_2})$.
\end{proof}

% *******************************************************************
\subsection{Dissipative and accumulative realizations}
\label{3.4}
In this section we indicate some generalizations of the results from the
previous section for the case that $A_\Theta$ is only maximal dissipative or maximal
accumulative instead of self-adjoint.

For this we first recall some necessary definitions and facts. A linear relation
$S$ in $\cH$ is said to be \emph{dissipative} if $\Im(f^\prime,f)\geq 0$
for all $(f,f^\prime)^\top\in S$ and \emph{accumulative} if
$\Im(f^\prime,f)\leq 0$ for all $(f,f^\prime)^\top\in S$.
The relation $S$ is said to be \emph{maximal dissipative} (\emph{maximal accumulative})
if $S$ is dissipative (accumulative, respectively) and has no proper dissipative
(accumulative, respectively) extensions in $\cH$.
A dissipative (accumulative) relation $S$ in $\cH$ is maximal dissipative
(maximal accumulative, respectively) if and only if $\ran(S-\lambda_-)=\cH$
($\ran(S-\lambda_+)=\cH$, respectively) for some (and hence for all)
$\lambda_-\in\dC^-$
($\lambda_+\in\dC^+$, respectively). Similarly as for self-adjoint relations, a
maximal dissipative (maximal accumulative) relation $S$ can be
written as the orthogonal sum $S_{\rm op}\oplus S_\infty$ of a maximal
dissipative (maximal accumulative, respectively)
operator $S_{\rm op}$ in the Hilbert space $\cH_{\rm op}=(\mul S)^\bot$ and the
purely
multi-valued relation $S_\infty$ in $\cH_\infty=\mul S$.

It follows easily from Green's identity \eqref{green1} that $A_\Theta$ is
dissipative (accumulative)
if $\Theta$ is dissipative (accumulative, respectively).  Krein's formula
\eqref{krein2}
is valid for $\lambda\in\dC^-$ if $A_\Theta$ is maximal dissipative and for
$\lambda\in\dC^+$
if $A_\Theta$ is maximal accumulative.  Moreover, it is easy to see that
Lemma~\ref{pr.inv_bdd}
remains valid for $\lambda\in\dC^-$ ($\lambda\in\dC^+$) if $\Theta$ is a
maximal dissipative (maximal accumulative, respectively) relation in $\cG$ such
that
$0\notin\sigma_{\rm ess}(\Theta)$.
This leads to the following variants of Theorems~\ref{th.A_Th_self_adj} and
\ref{th.resolv_diff1}.

% -------------------------------------------------------------------
\begin{theorem}
Let $A$ be a closed symmetric relation in $\cH$ and let
$\{\cG,\Gamma_0,\Gamma_1\}$
be a quasi boundary triple for $A^*$
with $A_i=\ker\Gamma_i$, $i=0,1$, and Weyl function $M$.
Assume that $A_1$ is self-adjoint and that $\ov{M(\lambda_0)}\in\sS_\infty(\cG)$
for some $\lambda_0\in\dC\backslash\dR$.
If $\Theta$ is a maximal dissipative (maximal accumulative) relation in $\cG$ such that
\begin{equation}
0\notin\sess(\Theta) \qquad\text{and}\qquad
\Theta^{-1}\bigl(\ran\ov{M(\lambda)}\bigr) \subset \cG_0
\end{equation}
hold for some $\lambda\in\dC^-$ ($\lambda\in\dC^+$, respectively),
then $A_\Theta=\{\hat f\in T\colon \Gamma\hat f\in \Theta\}$ is maximal
dissipative
(maximal accumulative, respectively) in $\cH$.
In particular, the second condition in \eqref{ass12} is satisfied if
$\dom\Theta\subset\cG_0$.
\end{theorem}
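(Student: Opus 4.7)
The plan is to mimic the proof of Theorem \ref{th.A_Th_self_adj} almost verbatim, exploiting the one-sided analogues of Lemma \ref{pr.inv_bdd} and Krein's formula that the authors have flagged as available in the dissipative/accumulative setting. I will focus on the maximal dissipative case; the maximal accumulative case follows by replacing $\dC^-$ with $\dC^+$ throughout.

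First I would check that $A_\Theta$ is itself dissipative. This is the observation already made in the text that Green's identity \eqref{green1} forces $\Im(f',f)\ge 0$ for every $(f,f')^\top\in A_\Theta$ as soon as $\Theta$ is dissipative, since if $(\Gamma_0\hat f,\Gamma_1\hat f)^\top\in\Theta$ then
\[
  \Im(f',f) = \Im(\Gamma_1\hat f,\Gamma_0\hat f) \ge 0.
\]
Thus it only remains to verify the range condition $\ran(A_\Theta-\lambda)=\cH$ for \emph{some} $\lambda\in\dC^-$, which, together with dissipativity, will give maximal dissipativity.

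Fix $\lambda\in\dC^-$ for which the second assumption in \eqref{ass12} is satisfied. By Theorem \ref{th.krein}(i), $\ran(A_\Theta-\lambda)$ consists exactly of those $g\in\cH$ with $\gamma(\ov\lambda)^*g\in\dom(\Theta-M(\lambda))^{-1}$. By Proposition \ref{gammaprop}(ii), $\ran\gamma(\ov\lambda)^*\subset\cG_1$, so it suffices to verify the inclusion
\[
  \cG_1\subset\dom\bigl(\Theta-M(\lambda)\bigr)^{-1}=\ran\bigl(\Theta-M(\lambda)\bigr).
\]
Here I would invoke the dissipative version of Lemma \ref{pr.inv_bdd}, valid for $\lambda\in\dC^-$ under the hypothesis $0\notin\sess(\Theta)$, to conclude that $(\Theta-\ov{M(\lambda)})^{-1}\in\cB(\cG)$. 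Now, given $y\in\cG_1$, set $x\defeq(\Theta-\ov{M(\lambda)})^{-1}y$, so that $\binom{x}{y+\ov{M(\lambda)}x}\in\Theta$. Since $A_1$ is self-adjoint, Proposition \ref{gammaprop}(iii) gives $\ran M(\lambda)=\cG_1$, and therefore
\[
  y+\ov{M(\lambda)}x \in \cG_1+\ran\ov{M(\lambda)} = \ran\ov{M(\lambda)}.
\]
The second condition in \eqref{ass12} then yields $x\in\cG_0=\dom M(\lambda)$, which upgrades the inclusion to $\binom{x}{y}\in\Theta-M(\lambda)$, i.e.\ $y\in\ran(\Theta-M(\lambda))$.

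The final remark (that $\dom\Theta\subset\cG_0$ implies the second condition in \eqref{ass12}) is automatic and requires no extra argument. The only genuine obstacle is checking that the one-sided analogue of Lemma \ref{pr.inv_bdd} really goes through in the dissipative/accumulative setting, but the authors have already indicated it does: the key inequality $\Im((\Theta_{\rm op}-P_{\rm op}\ov{M(\lambda)}|_{\cG_{\rm op}})x,x)<0$ for $\lambda\in\dC^-$ survives because $\Im\ov{M(\lambda)}<0$ in $\dC^-$ (by \eqref{e3}) and $\Im(\Theta_{\rm op}x,x)\le 0$ by accumulativity of $-\Theta$, i.e.\ dissipativity of $\Theta$ contributes the correct sign. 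All other steps from Theorem \ref{th.A_Th_self_adj} transfer unchanged.
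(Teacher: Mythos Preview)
Your approach is correct and is exactly what the paper intends: the theorem is stated in Section~\ref{3.4} without a separate proof, the text preceding it indicating that one simply reruns the argument of Theorem~\ref{th.A_Th_self_adj} using the one-sided versions of Krein's formula and Lemma~\ref{pr.inv_bdd}. You carry this out faithfully.

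One minor sign slip in your last paragraph: for $\Theta$ dissipative one has $\Im(\Theta_{\rm op}x,x)\ge 0$, not $\le 0$, and combined with $\Im\ov{M(\lambda)}<0$ for $\lambda\in\dC^-$ this gives
\[
  \Im\bigl((\Theta_{\rm op}-P_{\rm op}\ov{M(\lambda)}|_{\cG_{\rm op}})x,x\bigr)
  = \Im(\Theta_{\rm op}x,x) - \Im(\ov{M(\lambda)}x,x) > 0,
\]
so the inequality is strictly positive rather than strictly negative. This does not affect the argument, since all that is needed is that the imaginary part is nonzero, forcing trivial kernel; the Fredholm index $0$ claim still holds because the Fredholm region of a maximal dissipative operator contains $\dC^-$ (where the index is zero) and $0$ lies in the same connected component.
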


% --------------------------------------------------------------------
\begin{theorem}
\label{th.A_dis_acc}
Let $A$ be a closed symmetric relation in $\cH$ and let
$\{\cG,\Gamma_0,\Gamma_1\}$ be a quasi boundary triple for $A^*$
with $A_0=\ker\Gamma_0$, $\gamma$-field $\gamma$ and Weyl function $M$. Let
$\frA(\cG,\cH)$ be a class of operator ideals
and let $\Theta$ be a maximal dissipative (maximal accumulative) relation in
$\cG$ such that the following conditions hold:
\begin{itemize}
\item[(i)] $\ov{M(\lambda_0)}\in\sS_\infty(\cG)$ for some
$\lambda_0\in\CC\backslash\RR$;
\item[(ii)] $\gamma(\lambda_1)^*\in\frA^*(\cH,\cG)$ for some $\lambda_1\in\rho(A_0)$;
\item[(iii)] $0\notin\sess(\Theta)$
and $A_\Theta$ is maximal dissipative (maximal accumulative, respectively).
\end{itemize}
Then
\begin{equation*}
  (A_\Theta-\lambda)^{-1} - (A_0-\lambda)^{-1} \in
  (\frA\cdot\frA^*)(\cH)
\end{equation*}
for all $\lambda\in\rho(A_\Theta)\cap\rho(A_0)$.
\end{theorem}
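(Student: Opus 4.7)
The proof will essentially mirror that of Theorem~\ref{th.resolv_diff1}, with two adjustments: (a) Krein's formula and the invertibility statement from Lemma~\ref{pr.inv_bdd} are only available in one half-plane, and (b) the final extension via Lemma~\ref{resdifflemma} must start from a single point in that half-plane rather than from an arbitrary $\lambda\in\dC\setminus\dR$.

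First I would fix the dissipative case; the accumulative case is symmetric, with $\dC^-$ replaced by $\dC^+$. Pick any $\lambda\in\dC^-$. Since $\Theta$ is maximal dissipative with $0\notin\sess(\Theta)$ and, by Proposition~\ref{pr.suff_cond}(iii) applied to the symmetrically normed ideal $\frS_\infty$, $\ov{M(\lambda)}\in\frS_\infty(\cG)$, the dissipative-case version of Lemma~\ref{pr.inv_bdd} (indicated in the paragraph preceding the statement) yields
\[
  \bigl(\Theta-\ov{M(\lambda)}\bigr)^{-1}\in\cB(\cG).
\]
Since $A_\Theta$ is maximal dissipative, $\lambda\in\rho(A_\Theta)\cap\rho(A_0)$, and the dissipative version of Krein's formula (the remark on \eqref{krein2} before the statement) gives
\[
  (A_\Theta-\lambda)^{-1} - (A_0-\lambda)^{-1}
  = \ov{\gamma(\lambda)}\bigl(\Theta-\ov{M(\lambda)}\bigr)^{-1}\gamma(\ov\lambda)^*.
\]

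Next I would invoke Proposition~\ref{pr.suff_cond}(ii) to transport the ideal membership of $\gamma(\lambda_1)^*$ to every point of $\rho(A_0)$: $\gamma(\ov\lambda)^*\in\frA^*(\cH,\cG)$ and, taking adjoints, $\ov{\gamma(\lambda)}=\gamma(\ov\lambda)^{**}\in\frA(\cG,\cH)$. Combining this with the boundedness of the middle factor gives
\[
  \bigl(\Theta-\ov{M(\lambda)}\bigr)^{-1}\gamma(\ov\lambda)^*\in\frA^*(\cH,\cG),
\]
and hence the whole product lies in $(\frA\cdot\frA^*)(\cH)$ for this particular $\lambda\in\dC^-$.

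Finally I would extend this to every $\lambda\in\rho(A_\Theta)\cap\rho(A_0)$ by applying Lemma~\ref{resdifflemma} to $H=A_\Theta$ and $K=A_0$; note that the lemma only requires the ideal membership at one point of $\rho(A_\Theta)\cap\rho(A_0)$ and produces it at every such point. For the maximal accumulative case, the same argument runs with $\lambda\in\dC^+$ (where Krein's formula and the invertibility of $\Theta-\ov{M(\lambda)}$ are the relevant variants), and then Lemma~\ref{resdifflemma} again spreads the conclusion across $\rho(A_\Theta)\cap\rho(A_0)$. There is no real obstacle here beyond making sure one always works in the half-plane where Krein's formula and the inverse are guaranteed; the rest is exactly the proof of Theorem~\ref{th.resolv_diff1}.
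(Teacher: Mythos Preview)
Your proposal is correct and follows exactly the approach the paper intends: the paper does not give an explicit proof of this theorem but states that it is a variant of Theorem~\ref{th.resolv_diff1} obtained by using Krein's formula and the dissipative/accumulative version of Lemma~\ref{pr.inv_bdd} in the appropriate half-plane, and then extending via Lemma~\ref{resdifflemma}. One small slip: you wrote $\ov{\gamma(\lambda)}=\gamma(\ov\lambda)^{**}$, but this should be $\ov{\gamma(\lambda)}=\gamma(\lambda)^{**}$; the conclusion $\ov{\gamma(\lambda)}\in\frA(\cG,\cH)$ is still correct since Proposition~\ref{pr.suff_cond} gives $\gamma(\lambda)^*\in\frA^*(\cH,\cG)$ for all $\lambda\in\rho(A_0)$.
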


Similarly, a dissipative and accumulative variant of
Theorem~\ref{th.resolv_diff2} is valid; the formulation of such
a theorem is left to the reader.

Next we state a generalization of Theorem~\ref{th.resolv_diff4}.  In the mixed
case that one of the
two operators $\Theta_i$ is dissipative and the other one is accumulative, one
has to assume
that one ideal is symmetrically normed, and the proof is more complicated, but
similar to the proof of Theorem~3.11 in \cite{BLLLP10}.

% -------------------------------------------------------------------
\begin{theorem}
\label{th.resolv_diff6}
Let $A$ be a closed symmetric relation in $\cH$ and let
$\{\cG,\Gamma_0,\Gamma_1\}$ be a
quasi boundary triple for $A^*$ with $A_0=\ker\Gamma_0$, $\gamma$-field $\gamma$
and Weyl function $M$.  Let $\frB$ be a class of operator ideals,
let $\Theta_1$ and $\Theta_2$ be dissipative or accumulative bounded operators
in $\cG$ and assume that the following hold:
\begin{itemize}
  \item[(i)]
    $\overline{M(\lambda_0)}\in\sS_\infty(\cG)$ for some $\lambda_0\in\rho(A_0)$;
  \item[(ii)]
    $0\notin\sess(\Theta_i)$ and $A_{\Theta_i}$, $i=1,2$, are either maximal dissipative or
    maximal accumulative;
  \item[(iii)]
    $\Theta_1-\Theta_2\in\frB(\cG)$;
  \item[(iv)]
    in the case that one of $A_{\Theta_i}$ is maximal dissipative and the other one
    is maximal accumulative, assume in addition that $\frB$ is a class of symmetrically
    normed ideals and that $A_{\Re\Theta_i}=A_{\Re\Theta_i}^*$ for $i=1,2$.
\end{itemize}
Then
\begin{equation}\label{resdiff777}
  (A_{\Theta_1}-\lambda)^{-1} - (A_{\Theta_2}-\lambda)^{-1} \in
  \frB(\cH)
\end{equation}
for all $\lambda\in\rho(A_{\Theta_1})\cap\rho(A_{\Theta_2})$.
If, in addition, $\frA$ is another class of operator ideals and
$\gamma(\lambda_1)^*\in\frA^*(\cH,\cG)$ for some $\lambda_1\in\rho(A_0)$, then
\begin{equation}\label{resdiff888}
  (A_{\Theta_1}-\lambda)^{-1} - (A_{\Theta_2}-\lambda)^{-1} \in
  (\frA\cdot\frB\cdot\frA^*)(\cH)
\end{equation}
for all $\lambda\in\rho(A_{\Theta_1})\cap\rho(A_{\Theta_2})$.
\end{theorem}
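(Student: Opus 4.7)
The plan is to split the argument into two regimes: a \emph{consistent} case in which both $A_{\Theta_i}$ are maximal dissipative, both maximal accumulative, or one is self-adjoint and the other of either type; and a genuinely \emph{mixed} case in which $A_{\Theta_1}$ is maximal dissipative while $A_{\Theta_2}$ is maximal accumulative (or vice versa). The consistent case admits a common open half-plane on which Krein's formula \eqref{krein2} is simultaneously valid for both operators and will be handled in direct analogy with Theorem~\ref{th.resolv_diff4}; the mixed case will be reduced to the consistent one by inserting intermediate self-adjoint operators $A_{\Re\Theta_i}$, and this is exactly the place where the additional assumption (iv) is needed.

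In the consistent case I fix $\lambda$ in the common half-plane ($\CC^-$ if both $A_{\Theta_i}$ are maximal dissipative, $\CC^+$ if both are maximal accumulative, either one if one of the two is self-adjoint). The version of Lemma~\ref{pr.inv_bdd} recalled in Section~\ref{3.4} yields $(\Theta_i-\overline{M(\lambda)})^{-1}\in\cB(\cG)$ on this half-plane, and subtracting the two instances of \eqref{krein2} gives
\[
 (A_{\Theta_1}-\lambda)^{-1}-(A_{\Theta_2}-\lambda)^{-1}
 =\overline{\gamma(\lambda)}\bigl(\Theta_1-\overline{M(\lambda)}\bigr)^{-1}(\Theta_2-\Theta_1)\bigl(\Theta_2-\overline{M(\lambda)}\bigr)^{-1}\gamma(\overline\lambda)^*.
\]
The middle factor lies in $\frB(\cG)$ by (iii), the remaining factors are bounded, so the ideal property places the right-hand side in $\frB(\cH)$; Lemma~\ref{resdifflemma} then extends \eqref{resdiff777} to all $\lambda\in\rho(A_{\Theta_1})\cap\rho(A_{\Theta_2})$.

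For the mixed case I would use the operators $\Re\Theta_i$ as self-adjoint intermediaries. By (iv) the ideal $\frB$ is symmetrically normed, hence $*$-invariant, so $\Theta_1-\Theta_2\in\frB(\cG)$ implies $\Im\Theta_1-\Im\Theta_2\in\frB(\cG)$. The opposite signs $\Im\Theta_1\ge 0$, $\Im\Theta_2\le 0$ give $0\le\Im\Theta_1,\,-\Im\Theta_2\le\Im\Theta_1-\Im\Theta_2$, and the monotonicity of symmetrically normed ideals on positive operators (via $s_n(A)\le s_n(B)$ when $0\le A\le B$) therefore places each of $\Im\Theta_1$, $\Im\Theta_2$, and consequently also $\Re\Theta_1-\Re\Theta_2$, in $\frB(\cG)$. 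Since $A_{\Re\Theta_i}$ are self-adjoint by (iv), the telescoping
\[
\begin{aligned}
 (A_{\Theta_1}-\lambda)^{-1}-(A_{\Theta_2}-\lambda)^{-1}
 &=\bigl[(A_{\Theta_1}-\lambda)^{-1}-(A_{\Re\Theta_1}-\lambda)^{-1}\bigr]\\
 &\quad+\bigl[(A_{\Re\Theta_1}-\lambda)^{-1}-(A_{\Re\Theta_2}-\lambda)^{-1}\bigr]\\
 &\quad-\bigl[(A_{\Theta_2}-\lambda)^{-1}-(A_{\Re\Theta_2}-\lambda)^{-1}\bigr]
\end{aligned}
\]
turns every bracket into an instance of the previously treated consistent case, with parameter differences $i\Im\Theta_1$, $\Re\Theta_1-\Re\Theta_2$ and $i\Im\Theta_2$ respectively, all in $\frB(\cG)$; each bracket therefore lies in $\frB(\cH)$, establishing \eqref{resdiff777}.

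For the refined conclusion \eqref{resdiff888} I would apply the same decomposition but, in each consistent-case subcomputation, isolate the outer factors $\overline{\gamma(\lambda)}\in\frA(\cG,\cH)$ and $\gamma(\overline\lambda)^*\in\frA^*(\cH,\cG)$ guaranteed by Proposition~\ref{pr.suff_cond}(i)--(ii). What remains sandwiched between them is a bounded operator composed with an element of $\frB(\cG)$, producing a term in $(\frA\cdot\frB\cdot\frA^*)(\cH)$ at any non-real $\lambda$, and Lemma~\ref{resdifflemma} supplies the statement for all $\lambda\in\rho(A_{\Theta_1})\cap\rho(A_{\Theta_2})$. I expect the genuinely mixed case to be the main obstacle, and within it the imaginary-part domination step that extracts $\Im\Theta_1,\Im\Theta_2\in\frB(\cG)$ individually from the single hypothesis $\Theta_1-\Theta_2\in\frB(\cG)$: this is precisely why assumption (iv) cannot be dropped and why the proof has to follow the pattern of Theorem~3.11 in \cite{BLLLP10} rather than admit a direct one-line computation.
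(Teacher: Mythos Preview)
Your proposal is correct and follows essentially the same strategy as the paper's proof: the consistent case via direct subtraction of Krein's formula, and the mixed case via telescoping through the self-adjoint intermediaries $A_{\Re\Theta_i}$ together with the domination argument that extracts $\Im\Theta_i\in\frB(\cG)$ individually. Two small technical points the paper makes explicit that you gloss over: one must verify $0\notin\sess(\Re\Theta_i)$ (immediate from $\Im\Theta_i\in\frB(\cG)\subset\sS_\infty(\cG)$ and stability of the essential spectrum under compact perturbations), and in the mixed case one must check that the four-fold intersection $\rho(A_{\Theta_1})\cap\rho(A_{\Theta_2})\cap\rho(A_{\Re\Theta_1})\cap\rho(A_{\Re\Theta_2})$ is non-empty before summing the three brackets at a common $\lambda$ and invoking Lemma~\ref{resdifflemma}.
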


\begin{proof}
In the case that $A_{\Theta_1}$ and $A_{\Theta_2}$ are either both maximal
dissipative or
both maximal accumulative, the proof is exactly the same as for
Theorem~\ref{th.resolv_diff4}.
Let us treat the case that $A_{\Theta_1}$ is maximal dissipative and
$A_{\Theta_2}$ is maximal accumulative and that $\rho(A_{\Theta_1})\cap\rho(A_{\Theta_2})$
is non-empty.
Since $\Theta_1-\Theta_2\in\frB(\cG)$ we also have
\begin{equation*}
  \Re(\Theta_1-\Theta_2)\in\frB(\cG)\quad\text{and}\quad
  \Im(\Theta_1-\Theta_2)\in\frB(\cG).
\end{equation*}
Because $\Im\Theta_1\geq 0$ and $\Im\Theta_2\leq 0$, the following inequalities
are true:
\begin{equation*}
  0\leq \Im\Theta_1\leq\Im(\Theta_1-\Theta_2)\quad\text{and}\quad 0\leq
  -\Im\Theta_2\leq\Im(\Theta_1-\Theta_2).
\end{equation*}
Hence $s_k(\Im\Theta_1)\le s_k(\Im(\Theta_1-\Theta_2))$ and
$s_k(-\Im\Theta_2)\le s_k(\Im(\Theta_1-\Theta_2))$ for $k=1,2,\dots$, which
by \cite[III.\S2.2]{GK69} implies that also $\Im\Theta_1$ and $\Im\Theta_2$
belong to $\frB(\cG)$.
Therefore
\begin{equation*}
  \Theta_i-\Re\Theta_i\in\frB(\cG)\quad\text{and}\quad
  \sess(\Theta_i)=\sess(\Re\Theta_i),\qquad i=1,2.
\end{equation*}
The same reasoning as in the proof of Theorem~\ref{th.resolv_diff4} shows that
\begin{align}
  &(A_{\Theta_1}-\lambda)^{-1} - (A_{\Re\Theta_1}-\lambda)^{-1} \label{resd_AT_AReT} \\
  &\qquad=
  \ov{\gamma(\lambda)}\bigl(\Theta_1-\ov{M(\lambda)}\bigr)^{-1}
  (\Re\Theta_1-\Theta_1)
  \bigl(\Re\Theta_1-\ov{M(\lambda)}\bigr)^{-1}\gamma(\ov\lambda)^*
  \notag
\end{align}
for all $\lambda\in\dC_-$, and hence the difference of the resolvents of
$A_{\Theta_1}$ and $A_{\Re\Theta_1}$ belongs to $\frB(\cH)$, which in particular
implies that $\sess(A_{\Theta_1})=\sess(A_{\Re\Theta_1})$.
An application of Lemma~\ref{resdifflemma} yields that the resolvent difference in \eqref{resd_AT_AReT}
is in $\frB(\cH)$ for all $\lambda\in\rho(A_{\Theta_1})\cap\rho(A_{\Re\Theta_1})$.
Similar formulae hold for the resolvent differences
\begin{equation*}
  (A_{\Re\Theta_1}-\lambda)^{-1} - (A_{\Re\Theta_2}-\lambda)^{-1},\qquad
  \lambda\in\dC\backslash\dR,
\end{equation*}
and
\begin{equation*}
  (A_{\Re\Theta_2}-\lambda)^{-1} -
  (A_{\Theta_2}-\lambda)^{-1},\qquad\lambda\in\dC_+,
\end{equation*}
which also belong to $\frB(\cH)$ for $\lambda\in\rho(A_{\Re\Theta_1})\cap\rho(A_{\Re\Theta_2})$
and $\lambda\in\rho(A_{\Theta_2})\cap\rho(A_{\Re\Theta_2})$, respectively.
If $\lambda$ belongs to
\begin{equation}\label{inters4rho}
  \rho(A_{\Theta_1})\cap\rho(A_{\Theta_2})\cap\rho(A_{\Re\Theta_1})\cap\rho(A_{\Re\Theta_2}),
\end{equation}
then these three resolvent differences can be added, which yields \eqref{resdiff777} for all
such $\lambda$.  Since $\rho(A_{\Theta_1})\cap\rho(A_{\Theta_2})\ne\varnothing$ and
$\sess(A_{\Theta_i})=\sess(A_{\Re\Theta_i})$ the set in \eqref{inters4rho} is non-empty.
Another application of Lemma~\ref{resdifflemma} shows that \eqref{resdiff777} is true
for all $\lambda\in\rho(A_{\Theta_1})\cap\rho(A_{\Theta_2})$.

If, in addition, $\gamma(\lambda_1)^*\in\frA^*(\cH,\cG)$ for some, and hence for
all, $\lambda_1\in\rho(A_0)$, then $\overline{\gamma(\lambda_1)}\in\frA(\cG,\cH)$ and
therefore \eqref{resdiff888} holds for all
$\lambda\in\rho(A_{\Theta_1})\cap\rho(A_{\Theta_2})$.
\end{proof}

% *******************************************************************
\section{Self-adjoint elliptic operators and spectral estimates for
resolvent differences}
\label{sec4}

In this section we study elliptic operators on domains in $\RR^n$ with smooth
compact boundary,
i.e.\ either on bounded domains or on exterior domains.  In the first subsection
we construct
a quasi boundary triple where functions in the domain of $T$ are in $H^2$ in a
neighbourhood
of the boundary and prove sufficient conditions for self-adjoint realizations.
We shall sometimes speak of an $H^2$ framework here although for exterior domains
$T$ is defined on a larger space, see Definition~\ref{domt}.
In Subsection~\ref{4.2} we apply the abstract results from Section~\ref{3.3} to
elliptic operators and prove estimates for singular values of resolvent
differences
of realizations with different boundary conditions.
In Section~\ref{4.3} self-adjoint elliptic operators on $\dR^n$ with $\delta$ and $\delta^\prime$-interactions
on smooth hypersurfaces are constructed with the help of quasi boundary triples and interface conditions
on the hypersurface. The abstract results from Section~\ref{3.3} imply
spectral estimates for the resolvent differences of the elliptic operators
with $\delta$ and $\delta^\prime$-interactions and the unperturbed
elliptic operator on $\dR^n$.

% *******************************************************************
\subsection{Quasi boundary triples and Weyl functions for second order elliptic
differential expressions}\label{4.1}

Let $\Omega\subset\dR^n$ be a bounded or unbounded domain with compact
$C^\infty$-boundary $\partial\Omega$.
We consider a formally symmetric second order differential
expression
\begin{equation}\label{cl}
  (\cL f)(x) \defeq -\sum_{j,k=1}^n \left(
  \frac{\partial}{\partial x_j} \Bigl(a_{jk} \frac{\partial f}{\partial
  x_k}\Bigr)\right)(x)+ a(x)f(x),
  \quad x\in\Omega,
\end{equation}
with bounded, infinitely differentiable, real-valued coefficients
$a_{jk}\in C^\infty(\overline\Omega)$ satisfying $a_{jk}(x)=a_{kj}(x)$ for
all $x\in\overline\Omega$ and $j,k=1,\dots,n$ and a real-valued function
$a\in L^\infty(\Omega)$. Furthermore, $\cL$ is assumed to be uniformly elliptic, i.e.\
the condition
\begin{equation*}
  \sum_{j,k=1}^n a_{jk}(x)\xi_j\xi_k\geq C\sum_{k=1}^n\xi_k^2
\end{equation*}
holds for some $C>0$, all $\xi=(\xi_1,\dots,\xi_n)^\top\in\dR^n$ and $x\in\overline
\Omega$.
We note that the assumptions on the
domain $\Omega$ and the coefficients of $\cL$ can be relaxed but it is not our
aim to treat the most general setting here. We refer the reader to, e.g.\ \cite{EE87,G85,GBook08,LM72,M,W} for
possible generalizations and to \cite{AGW10,AGMT10,GM08-1,GM08-2,GM09} for recent work on
non-smooth domains.  On the other hand, we do not impose any conditions on the
growth of derivatives of $a_{jk}$ at infinity; cf.\ the stronger assumptions
in \cite[Condition 3.1]{M10}.

In the following we denote by $H^s(\Omega)$ and $H^s(\partial\Omega)$, $s\geq0$,
the usual Sobolev spaces of order $s$ of $L^2$-functions on $\Omega$ and $\partial\Omega$,
respectively. The Sobolev space $H^{-s}(\partial\Omega)$, $s>0$, of negative order is defined
as the dual space of $H^s(\partial\Omega)$; see, e.g.\
\cite[Section~7.3]{LM72} and \cite{AF03}.
The closure of $C_0^\infty(\Omega)$ in $H^s(\Omega)$ is denoted by
$H_0^s(\Omega)$.
For a function $f\in C^\infty(\overline\Omega)$ we denote the trace by
$f\vert_{\partial\Omega}$ and we set
\begin{equation*}
  \frac{\partial f}{\partial\nu_\cL}\Bigl|_{\partial\Omega} \defeq
  \sum_{j,k=1}^n a_{jk} n_j \frac{\partial f}{\partial x_k}
  \Bigl|_{\partial\Omega},
\end{equation*}
where $n(x)=(n_1(x),\dots, n_n(x))^\top$ is the unit vector at the point
$x\in\partial\Omega$ pointing out of $\Omega$.
Recall that for all $s>\frac{3}{2}$ the mapping $C^\infty(\overline\Omega)\ni
f\mapsto\bigl\{f|_{\partial\Omega},
\tfrac{\partial f}{\partial\nu_\cL}\bigl|_{\partial\Omega}\bigr\}$
extends by continuity to a continuous surjective mapping
\begin{equation}\label{tracemap}
  H^s(\Omega)\ni f\mapsto \left\{f|_{\partial\Omega},
  \frac{\partial f}{\partial\nu_\cL}\Bigl|_{\partial\Omega}\right\}
  \in H^{s-1/2}(\partial\Omega)\times H^{s-3/2}(\partial\Omega),
\end{equation}
which admits a bounded right inverse.
For $s=2$ the kernel of the mapping in \eqref{tracemap} is equal to
$H_0^2(\Omega)$.

In order to construct a quasi boundary triple for the maximal operator associated to $\cL$ in $L^2(\Omega)$
in an ``$H^2$ setting'' we fix a suitable operator $T$ as the domain of the boundary mappings.

\begin{definition}\label{domt}
The differential operator $Tf=\cL f$ (understood in the distributional sense) is defined on the domain
\begin{equation*}
\dom T=\begin{cases} H^2(\Omega) & \text{if }\Omega\text{ is bounded},\\[1ex]
                           \bigl\{f\in H^1(\Omega)\colon \cL f\in L^2(\Omega),\,
  f|_{\Omega'}\in H^2(\Omega')\bigr\} & \text{if }\Omega\text{ is unbounded},
                          \end{cases}
\end{equation*}
where in the unbounded case
$\Omega'\subset\Omega$ is a bounded subdomain of
$\Omega$
with smooth boundary such that $\partial\Omega \subset \partial\Omega'$.
\end{definition}

In the unbounded case in Definition~\ref{domt} we can choose, for instance,
$\Omega' = \Omega\cap B_R(0)$, where $B_R(0)=\{x\in\RR^n\colon \|x\|<R\}$ and
$R$ is big enough so that $\RR^n\backslash\Omega \subset B_R(0)$.
Since the condition $\cL f\in L^2(\Omega)$ implies that $f\in H^2_{\rm loc}(\Omega)$
(see, e.g.\ \cite[Theorem~2.3.2]{LM72}), it is clear that the set on the right-hand side
of $\dom T$ in the case of unbounded $\Omega$ is independent of $\Omega'$.
In both cases ($\Omega$ bounded or unbounded), functions $f$ in $\dom T$ are in $H^2$
in a neighbourhood of $\partial\Omega$,
and hence $f|_{\partial\Omega}$ and $\frac{\partial
f}{\partial\nu_{\cL}}\bigl|_{\partial\Omega}$
are well defined and have values in $H^{3/2}(\partial\Omega)$ and
$H^{1/2}(\partial\Omega)$, respectively.
Define the Dirichlet, Neumann and minimal operator associated with $\cL$ by
\begin{alignat*}{2}
  \AD f &= \cL f, \quad & \dom\AD &= \bigl\{f\in\dom T\colon
    f|_{\partial\Omega}=0\bigr\}, \\[1ex]
  \AN f &= \cL f, & \dom\AN &= \biggl\{f\in\dom T\colon
    \frac{\partial f}{\partial\nu_\cL}\Bigl|_{\partial\Omega}=0\biggr\}, \\[1ex]
  Af &= \cL f, & \dom A &= \biggl\{f\in\dom T\colon f|_{\partial\Omega}=0,\,
    \frac{\partial f}{\partial\nu_\cL}\Bigl|_{\partial\Omega}=0\biggr\}.
\end{alignat*}
In the following theorem it is shown how a quasi boundary triple can be
defined in the present situation. For the convenience of the reader the self-adjointness of
$\AN$ in the case of an unbounded domain will be shown in full detail, the remaining assertions
are essentially a consequence of Theorem~\ref{suff_cond_qbt}.

% -------------------------------------------------------------------
\begin{theorem}\label{qbtthm}
Let $\cL$ be the uniformly elliptic differential expression from \eqref{cl}, let
$T$, $\AD$, $\AN$, $A$ be the differential operators from above and
define the boundary mappings
\begin{equation*}
  \Gamma_0 \hat f \defeq \frac{\partial
f}{\partial\nu_\cL}\!\Bigm|_{\partial\Omega}\quad\text{and}\quad
  \Gamma_1 \hat f \defeq f |_{\partial\Omega},\qquad
  \hat f=\begin{pmatrix} f \\ Tf\end{pmatrix},\quad f\in\dom T.
\end{equation*}
Then $A$ is a densely defined closed, symmetric operator in $L^2(\Omega)$, the operators
$\AN=\ker\Gamma_0$ and $\AD=\ker\Gamma_1$ are self-adjoint extensions of $A$,
and
$\{L^2(\partial\Omega),\Gamma_0,\Gamma_1\}$ is a quasi boundary triple for
$A^*$.
Moreover,
\begin{equation} \label{halfgreen1}
  (Tf,g) = \fra[f,g]
  - (\Gamma_0f,\Gamma_1g)
\end{equation}
holds for all $f,g\in\dom(T)$, where
\begin{equation} \label{formA}
  \fra[f,g] \defeq \int_\Omega\Biggl(\sum_{j,k=1}^n a_{jk}\frac{\partial
f}{\partial x_k}
  \frac{\partial \ov{g}}{\partial x_j} + af\ov{g}\Biggr),\qquad f,g\in H^1(\Omega).
\end{equation}
%is a symmetric quadratic form.
\end{theorem}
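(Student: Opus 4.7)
The plan is to reduce the statement, via Theorem~\ref{suff_cond_qbt}, to verifying (a) $\ker\Gamma_0$ contains a self-adjoint operator, (b) $\ran\Gamma$ is dense in $L^2(\partial\Omega)\times L^2(\partial\Omega)$, and (c) the abstract Green identity \eqref{green1} holds on $T$. The two real tasks are proving \eqref{halfgreen1} (from which \eqref{green1} follows) and establishing self-adjointness of $\AN$ and $\AD$ when $\Omega$ is unbounded.

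First I would prove the ``half'' Green identity \eqref{halfgreen1}. For $f,g\in\dom T$ in the bounded case $f,g\in H^2(\Omega)$ and this is the classical formula obtained by two integrations by parts. In the unbounded case, pick the bounded subdomain $\Omega'$ from Definition~\ref{domt} and a cutoff $\chi\in C^\infty(\ov\Omega)$ with $\chi\equiv1$ in a neighbourhood of $\partial\Omega$ and $\supp\chi\subset\ov{\Omega'}$. Split $g=\chi g+(1-\chi)g$. On $\chi g$ the function $f$ belongs to $H^2(\Omega')$, and $\chi g$ has compact support in $\ov{\Omega'}$ vanishing on the inner boundary of $\Omega'$, so the $H^2$ formula on $\Omega'$ produces exactly the boundary term $-(\Gamma_0 f,\Gamma_1 g)$ on $\partial\Omega$. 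For $(1-\chi)g\in H^1(\Omega)$, which vanishes near $\partial\Omega$, the distributional identity $\langle\cL f,\varphi\rangle=\fra[f,\varphi]$ (valid for test functions in $C_0^\infty(\Omega)$) extends by density to $\varphi=(1-\chi)g$, producing no boundary contribution. Summing yields \eqref{halfgreen1}. Subtracting \eqref{halfgreen1} from its conjugate, using $a_{jk}=a_{kj}$ and the reality of the coefficients to see that $\fra[f,g]=\ov{\fra[g,f]}$, gives the full identity \eqref{green1}, which is (c).

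Next I would show $\AN=\AN^*$; the case of $\AD$ is analogous. In the bounded case this is classical. In the unbounded case I would invoke the form approach: the sesquilinear form $\fra$ in \eqref{formA} is bounded from below and closed on $H^1(\Omega)$ (by the ellipticity condition and boundedness of $a$), so by the first representation theorem it induces a self-adjoint operator $\widetilde{\AN}$ in $L^2(\Omega)$. For $f\in\dom\widetilde{\AN}$, testing against $C_0^\infty(\Omega)$ gives $\cL f=\widetilde{\AN} f\in L^2(\Omega)$ distributionally, and then interior plus boundary elliptic regularity (applied locally in a neighbourhood of the smooth compact set $\partial\Omega$, where the Neumann-type variational condition is coercive) yields $f|_{\Omega'}\in H^2(\Omega')$; hence $f\in\dom T$. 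Using \eqref{halfgreen1} against arbitrary $g\in\dom T$ and comparing with $\fra[f,g]=(\widetilde{\AN} f,g)$ forces $(\Gamma_0 f,\Gamma_1 g)=0$ on a dense set of $\Gamma_1 g$, so $\Gamma_0 f=0$ and $f\in\dom\AN$. Thus $\widetilde{\AN}\subset\AN$. Conversely, \eqref{green1} shows $\AN$ is symmetric, so $\widetilde{\AN}\subset\AN\subset\AN^*\subset\widetilde{\AN}^{\,*}=\widetilde{\AN}$, giving equality. This step is the main obstacle because it combines the form construction with a non-trivial boundary regularity statement, which is why the authors single it out.

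Finally, for (b), the surjectivity of the trace map \eqref{tracemap} applied on the bounded domain $\Omega'$ (and extended by a cutoff so as to yield an element of $\dom T$ in the unbounded case) shows that $\ran\Gamma\supset H^{1/2}(\partial\Omega)\times H^{3/2}(\partial\Omega)$, which is dense in $L^2(\partial\Omega)\times L^2(\partial\Omega)$. Together with self-adjointness of $\AN$ (condition (a)) and \eqref{green1} (condition (c)), Theorem~\ref{suff_cond_qbt} now implies that $A=\ker\Gamma$ is a closed symmetric operator and $\{L^2(\partial\Omega),\Gamma_0,\Gamma_1\}$ is a quasi boundary triple for $A^*$. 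Density of $\dom A$ in $L^2(\Omega)$ is immediate from $C_0^\infty(\Omega)\subset\dom A$, and the analogous form argument applied to $\fra$ on $H_0^1(\Omega)$ identifies $\AD$ with a self-adjoint operator.
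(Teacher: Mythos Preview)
Your proposal is correct and follows essentially the same route as the paper: verify conditions (a)--(c) of Theorem~\ref{suff_cond_qbt}, with the substantive work being (i) the half-Green identity \eqref{halfgreen1} via a cutoff split near $\partial\Omega$, and (ii) self-adjointness of $\AN$ (and $\AD$) in the unbounded case via the form $\fra$ on $H^1(\Omega)$ plus local $H^2$-regularity near the boundary. The only noteworthy differences are organizational: you prove \eqref{halfgreen1} first and then feed it into the identification $\widetilde{\AN}=\AN$, whereas the paper reverses the order (using the weaker identity \eqref{halfgreen2} to read off the weak Neumann condition before establishing regularity); and where you invoke ``boundary elliptic regularity'' for the Neumann variational problem, the paper makes this step explicit by multiplying with a cutoff $\varphi$ and recognizing $\varphi f$ as lying in the domain of the self-adjoint mixed Neumann--Dirichlet operator $A_{\Omega',\rm N,D}$ on the bounded domain $\Omega'$, which immediately gives $f|_{\Omega'}\in H^2(\Omega')$.
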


\begin{proof}
If $\Omega$ is bounded, the assertions in the theorem apart from
\eqref{halfgreen1} were proved
in \cite[Proposition~4.1]{BL07}.  The proof of \eqref{halfgreen1} follows easily
from known
results; see also the proof below for the case that $\Omega$ is unbounded.

Now let $\Omega$ be unbounded.
First we show that $\AN$ as defined above is self-adjoint.
Let the symmetric quadratic form $\fra[f,g]$ be as in the theorem.
Because of the boundedness of the coefficients and the uniform ellipticity, this
quadratic form can be compared with the form
\[
  \fra_0[f,g] = \int_\Omega \grad f \cdot\ov{\grad g},
\]
which corresponds to the Laplace operator, namely, there exist
constants $c_1$, $c_2$, $d_1$, $d_2$ such that
\[
  c_1 \fra_0[f,f] + d_1\|f\|^2 \le \fraN[f,f] \le c_2 \fra_0[f,f] + d_2\|f\|^2.
\]
Since $\|f\|^2+\fra_0[f,f]=\|f\|_{H^1(\Omega)}^2$, this implies that
the form $\fraN$ is closed and bounded from below. Hence,
by \cite[Theorem~VI.2.1]{kato} there exists a self-adjoint operator $\tAN$ in $L^2(\Omega)$ with
$\dom\tAN\subset\dom\fraN=H^1(\Omega)$ which
is bounded from below and represents the form $\fraN$, i.e.\
\begin{equation} \label{defdomtAN}
  (\tAN f,g) = \fraN[f,g]
\end{equation}
for all $f\in\dom \tAN$ and $g\in H^1(\Omega)$.

We claim that the domain of $\tAN$ is equal to
\begin{equation}\label{domtAN}
  \Bigl\{f \in H^1(\Omega)\colon \cL f\in L^2(\Omega),\,
  \frac{\partial f}{\partial\nu_\cL}\Bigl|_{\partial\Omega}=0\Bigr\}
\end{equation}
and that $\tAN f=\cL f$ for $f\in\dom\tAN$.
In fact, let $f\in\dom \tAN$.  Then \eqref{defdomtAN} is true in particular for $g\in
C_0^\infty(\Omega)$,
which implies
\[
  (\tAN f,g) = \fraN[f,g] = (f,\cL g) = \langle \cL f,g\rangle,
\]
where the last term is the application of the distribution $\cL f$ to the test
function $g$; the second equality follows from the definition of distributional
derivatives.
This implies that $\cL f$ is a regular distribution and equal to
$\tAN f\in L^2(\Omega)$.
The formula
\begin{equation} \label{halfgreen2}
  (\cL u,v) = \fraN[u,v]
  - \int_{\partial\Omega}\frac{\partial u}{\partial\nu_{\cL}}\ov{v}
\end{equation}
is valid for all $u\in H^1(\Omega)$ such that $\cL u\in L^2(\Omega)$ and
all $v\in H^1(\Omega)$ such that one of the two functions has bounded
support\footnote{Indeed, for $u,v\in H^2(\Omega)$ and bounded $\Omega$, formula
\eqref{halfgreen2} is well known. Since in this case $H^2(\Omega)$ is dense in
$H^1_\cL(\Omega)\defeq\{w\in H^1(\Omega)\colon\cL w\in L^2(\Omega)\}$ equipped with the norm
$\Vert w\Vert_{H^1}+\Vert\cL w\Vert_{L^2}$ and
$\tfrac{\partial}{\partial\nu_\cL}: H^1_\cL(\Omega)\rightarrow H^{-1/2}(\partial\Omega)$
is continuous (see \cite{G68,LM72}), an approximation argument implies \eqref{halfgreen2}.}.
The derivative of $u$ under the integral is in $H^{-1/2}(\partial\Omega)$,
the trace of $v$ is in $H^{1/2}(\partial\Omega)$; so the integral is
understood as a dual pairing of $H^{-1/2}(\partial\Omega)$ and
$H^{1/2}(\partial\Omega)$.
Since boundary values of $H^1(\Omega)$-functions with bounded support exhaust
the space $H^{1/2}(\partial\Omega)$, relations \eqref{defdomtAN} and
\eqref{halfgreen2} with $u=f$ and $v=g$ yield that
$\frac{\partial f}{\partial\nu_\cL}\bigl|_{\partial\Omega}=0$.
Hence $f$ is in the set in \eqref{domtAN}.
Conversely, let $f$ be in the set in \eqref{domtAN}.
Then by \eqref{halfgreen2} we have
\[
  (\cL f,g) = \fraN[f,g]
\]
for all $g\in C^\infty(\Omega)$ with bounded support.  This implies that
$f\in\dom \tAN$ and $\tAN f=\cL f$ by \cite[Theorem~VI.2.1(iii)]{kato} since
$\{g\in C^\infty(\Omega)\colon \supp g\text{ bounded}\}$ is dense
in $H^1(\Omega)$, which implies that it is a core of $\fraN$.

We show that functions in $\dom \tAN$ are in $H^2$ in a neighbourhood of
$\partial\Omega$.
Let $R>0$ be such that $\RR^n\backslash\Omega\subset B_R(0)$ and set
$\Omega'\defeq\Omega\cap B_R(0)$.  Moreover, choose a $C^\infty$-function
$\varphi$ defined on $\Omega$ such that $\supp\varphi\subset\Omega'$,
that $\varphi(x)=1$ in a neighbourhood of $\partial\Omega$ and that
$\varphi(x)=0$
in a neighbourhood of $S_R(0)\defeq\{x\in\RR^n\colon \|x\|=R\}$.
Let $f$ be in $\dom \tAN$, i.e.\ in the set in \eqref{domtAN}.
We want to show that $\varphi f\in\dom\tAN$.
Clearly, $\varphi f\in H^1(\Omega)$.  Because
\[
  \cL(\varphi f) = \varphi(\cL f) - \sum_{j,k=1}^n\biggl[
  2a_{jk}\frac{\partial\varphi}{\partial x_j}\frac{\partial f}{\partial x_k}
  + f\frac{\partial a_{jk}}{\partial x_j}\frac{\partial\varphi}{\partial x_k}
  + a_{jk}f\frac{\partial^2\varphi}{\partial x_j\partial x_k}\biggr],
\]
$f\in H^1(\Omega)$ and the derivatives of $a_{jk}$ and $\varphi$ are uniformly
bounded
on the bounded set $\supp\varphi$, we can deduce that $\cL(\varphi f)\in
L^2(\Omega)$.
The validity of the boundary condition
$\frac{\partial(\varphi f)}{\partial\nu_\cL}\bigl|_{\partial\Omega}=0$
is clear from the fact that $\varphi(x)=1$ in a neighbourhood of
$\partial\Omega$.
It follows that $\varphi f$ is in the set in \eqref{domtAN} and hence in $\dom
\tAN$.
Now define a quadratic form $\fra_{\Omega',\rm N,D}$ in $L^2(\Omega')$ by the
formula in \eqref{formA} with domain
\[
  \dom \fra_{\Omega',\rm N,D} = \bigl\{h\in H^1(\Omega')\colon
f|_{S_R(0)}=0\bigr\}.
\]
This form defines a self-adjoint operator $A_{\Omega',\rm N,D}$:
\begin{align*}
   A_{\Omega',\rm N,D}h & = \cL h, \\[1ex]
   \dom A_{\Omega',\rm N,D} & = \biggl\{h\in H^2(\Omega')\colon h|_{S_R(0)}=0,\,
  \frac{\partial h}{\partial\nu_{\cL}}\Bigl|_{\partial\Omega}=0\biggr\}.
\end{align*}
Since $f\in\dom \tAN$ and any function $g$ in $\dom \fra_{\Omega',\rm N,D}$
can be extended by $0$ to a function $\wt g$ in $H^1(\Omega)$, we have
\[
  \bigl((\tAN f)\vert_{\Omega'},g\bigr)_{L^2(\Omega')}
  = \bigl(\tAN f,\wt g\bigr)_{L^2(\Omega)} = \fraN[f,\wt g]
  = \fra_{\Omega',\rm N,D}[f|_{\Omega'},g]
\]
for all $g\in\dom \fra_{\Omega',\rm N,D}$.  By \cite[Theorem~VI.2.1(iii)]{kato}
this implies that $f|_{\Omega'}\in\dom A_{\Omega',\rm N,D}$ and
hence $f|_{\Omega'}\in H^2(\Omega')$.

It follows that
\begin{align*}
  \dom \tAN &= \Bigl\{f \in H^1(\Omega)\colon \cL f\in L^2(\Omega),\,
  \frac{\partial f}{\partial\nu_\cL}\Bigl|_{\partial\Omega}=0\Bigr\} \\[1ex]
  &= \Bigl\{f \in H^1(\Omega)\colon \cL f\in L^2(\Omega),\,
  \frac{\partial f}{\partial\nu_\cL}\Bigl|_{\partial\Omega}=0,\,f|_{\Omega'}\in
  H^2(\Omega')\Bigr\} \\[1ex]
  &= \Bigl\{f \in\dom T\colon
  \frac{\partial f}{\partial\nu_\cL}\Bigl|_{\partial\Omega}=0\Bigr\} \\[1ex]
  &= \dom \AN
\end{align*}
and that $\AN=\tAN$ is a self-adjoint operator in $L^2(\Omega)$.
With a similar reasoning and using the quadratic form $\fraN$ restricted
to $H_0^1(\Omega)$ one can show that $\AD$ is a self-adjoint operator in $L^2(\Omega)$.

Next we show that $\{L^2(\partial\Omega),\Gamma_0,\Gamma_1\}$ is a quasi boundary triple using
Theorem~\ref{suff_cond_qbt}.
It follows from the considerations before the statement of the current theorem
that $\Gamma_0$ and $\Gamma_1$ are well defined.
Moreover,
$$\ran \Gamma=\ran\begin{pmatrix} \Gamma_0 \\ \Gamma_1\end{pmatrix}=H^{1/2}(\partial\Omega)\times
H^{3/2}(\partial\Omega)$$
(see, e.g.\ \cite[Theorem~1.8.3]{LM72}),
which is dense in $L^2(\partial\Omega)\times L^2(\partial\Omega)$.

In order to show Green's identity we first show the identity \eqref{halfgreen1}.
Let $\Omega'$ and $\varphi$ be as above and set $\psi\defeq
1-\varphi$.
If $f,g\in\dom T$, then $(\varphi f)|_{\Omega'},(\varphi g)|_{\Omega'} \in H^2(\Omega')$
and $\psi f,\psi g\in \dom \AN$.  Using \eqref{halfgreen2} and \eqref{defdomtAN}
we obtain
\begin{align*}
  \bigl(Tf,g\bigr)_{L^2(\Omega)} &= \bigl(Tf,\varphi g\bigr)_{L^2(\Omega')}
  + \bigl(T(\varphi f),\psi g\bigr)_{L^2(\Omega')} + \bigl(T(\psi f),\psi
  g\bigr)_{L^2(\Omega)} \\[1ex]
  &= \fraN[f,\varphi g]-\int_{\partial\Omega}\frac{\partial f}{\partial\nu_\cL}\ov{\varphi g} \\[1ex]
  &\quad + \fraN[\varphi f,\psi g]
  - \int_{\partial\Omega}\frac{\partial(\varphi f)}{\partial\nu_\cL}\ov{\psi g}
  \\[1ex]
  &\quad + \fraN[\psi f,\psi g] \\[1ex]
  &= \fra[f,g] - \bigl(\Gamma_0f,\Gamma_1g\bigr)_{L^2(\partial\Omega)}
\end{align*}
since $\varphi(x)=1$ and $\psi(x)=0$ in a neighbourhood of $\partial\Omega$,
which proves \eqref{halfgreen1}.
The abstract Green identity \eqref{green1} follows immediately from this and
the symmetry of $\fra$.

Now we can apply Theorem~\ref{suff_cond_qbt} to obtain that $A$ is a closed,
symmetric operator and that $\{L^2(\partial\Omega),\Gamma_0,\Gamma_1\}$ is a
quasi boundary triple. Moreover, since $T$ is an operator,
we conclude that $T^*=A$ is densely defined.
\end{proof}

Observe that for the quasi boundary triple in Theorem~\ref{qbtthm} we have
\[
  \cG_0 = \ran\Gamma_0 = H^{1/2}(\partial\Omega)\qquad\text{and}\qquad
  \cG_1 = \ran\Gamma_1 = H^{3/2}(\partial\Omega).
\]
We also note that the triple $\{L^2(\partial\Omega),\Gamma_0,\Gamma_1\}$
is not a generalized boundary triple or a boundary relation in the sense of
\cite{DM95,DHMS06}
and we refer to \cite{G68,BGW09} for a modified approach that leads to an
ordinary boundary triple for $A^*$.
One of the advantages of the quasi boundary triple in Theorem~\ref{qbtthm}
is that the corresponding Weyl function is the inverse of the usual
Dirichlet-to-Neumann map, whereas the Weyl function corresponding
to the ordinary boundary triple from \cite{G68,BGW09}
(which differs by an unbounded constant from the Dirichlet-to-Neumann map)
is more difficult to interpret; see also \cite[Proposition 4.1]{B10}.
The $\gamma$-field corresponding to the quasi boundary triple from
Theorem~\ref{qbtthm} is the Poisson operator for the Neumann problem
associated with $\cL$.  This is summarized in the following proposition,
whose proof is clear from the definitions of $\gamma(\lambda)$ and $M(\lambda)$.

% -------------------------------------------------------------------
\begin{proposition}\label{gammamh21}
Let $\dom T$ be as in Definition~\ref{domt} and denote for
$\varphi\in H^{1/2}(\partial\Omega)$ and $\lambda\in\rho(\AN)$ the
unique solution of
\begin{equation*}
  \cL h=\lambda h,\qquad \frac{\partial
h}{\partial\nu_\cL}\!\Bigm|_{\partial\Omega}=\varphi
\end{equation*}
in $\dom T$ by $f_\lambda(\varphi)$.
Then the $\gamma$-field
$\gamma$ and Weyl function $M$ associated with the quasi boundary triple
$\{L^2(\partial\Omega),\Gamma_0,\Gamma_1\}$ in Theorem~\ref{qbtthm} are given by
\begin{alignat*}{2}
  \gamma(\lambda)\colon& H^{1/2}(\partial\Omega)\rightarrow L^2(\Omega),\qquad &
  \varphi&\mapsto f_\lambda(\varphi), \\[1ex]
  M(\lambda)\colon& H^{1/2}(\partial\Omega)\rightarrow
  H^{3/2}(\partial\Omega),\qquad &
  \varphi&\mapsto f_\lambda(\varphi)\vert_{\partial\Omega}.
\end{alignat*}
\end{proposition}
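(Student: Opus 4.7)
The plan is to prove the proposition by a direct unwinding of Definition~\ref{def:gammaWeyl} in the elliptic setting of Theorem~\ref{qbtthm}. The first step is to establish that the boundary value problem
\[
  \cL h = \lambda h, \qquad \frac{\partial h}{\partial\nu_\cL}\!\Bigm|_{\partial\Omega}=\varphi,
\]
has indeed a unique solution $f_\lambda(\varphi)\in\dom T$ for every $\varphi\in H^{1/2}(\partial\Omega)$ and every $\lambda\in\rho(\AN)$. This is the only non-trivial point, but it follows for free from the abstract framework: since $\AN=\ker\Gamma_0$ is self-adjoint and $\lambda\in\rho(\AN)$, we have the direct sum decomposition $T = \AN\,\widehat{+}\,\hat\cN_{\lambda,T}$ recalled in Section~\ref{3.1}, and consequently the map
\[
  \Gamma_0\upharpoonright\hat\cN_{\lambda,T}\colon \hat\cN_{\lambda,T}\longrightarrow \cG_0
\]
is a bijection. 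By Theorem~\ref{qbtthm} the range $\cG_0=\ran\Gamma_0$ equals $H^{1/2}(\partial\Omega)$, so for each such $\varphi$ there is a unique $\hat f_\lambda=\binom{f_\lambda}{\lambda f_\lambda}\in\hat\cN_{\lambda,T}$ with $\Gamma_0\hat f_\lambda=\varphi$; the first component $f_\lambda\in\ker(T-\lambda)\subset\dom T$ is the sought solution $f_\lambda(\varphi)$.

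Having identified $f_\lambda(\varphi)$ as the unique preimage $\pi_1(\Gamma_0\upharpoonright\hat\cN_{\lambda,T})^{-1}\varphi$, both formulas of the proposition follow immediately. Indeed, by Definition~\ref{def:gammaWeyl},
\[
  \gamma(\lambda)\varphi = \pi_1\bigl(\Gamma_0\upharpoonright\hat\cN_{\lambda,T}\bigr)^{-1}\varphi = f_\lambda(\varphi),
\]
so that $\gamma(\lambda)$ maps $\varphi\in H^{1/2}(\partial\Omega)$ to the Poisson-type solution $f_\lambda(\varphi)\in L^2(\Omega)$. Analogously,
\[
  M(\lambda)\varphi = \Gamma_1\bigl(\Gamma_0\upharpoonright\hat\cN_{\lambda,T}\bigr)^{-1}\varphi = \Gamma_1\hat f_\lambda = f_\lambda(\varphi)\bigm|_{\partial\Omega},
\]
which takes values in $\cG_1=\ran\Gamma_1=H^{3/2}(\partial\Omega)$ as asserted.

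No genuine obstacle arises: the existence/uniqueness for the Neumann boundary value problem, which in a purely PDE-theoretic proof would be the main point, is already packaged into the abstract decomposition $T = \AN\,\widehat{+}\,\hat\cN_{\lambda,T}$ together with the identification $\ran\Gamma_0 = H^{1/2}(\partial\Omega)$ established in Theorem~\ref{qbtthm}. Hence the proof consists essentially of rewriting Definition~\ref{def:gammaWeyl} with the concrete boundary mappings $\Gamma_0=\partial_{\nu_\cL}|_{\partial\Omega}$ and $\Gamma_1=\cdot\,|_{\partial\Omega}$ substituted, and noting the two range identifications.
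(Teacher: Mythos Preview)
Your proof is correct and follows exactly the approach intended by the paper, which merely remarks that the proposition is clear from the definitions of $\gamma(\lambda)$ and $M(\lambda)$. You have simply made explicit the identification of the abstract objects in Definition~\ref{def:gammaWeyl} with the concrete Neumann boundary value problem, using the decomposition $T=\AN\,\widehat+\,\hat\cN_{\lambda,T}$ and the range identities $\cG_0=H^{1/2}(\partial\Omega)$, $\cG_1=H^{3/2}(\partial\Omega)$ from Theorem~\ref{qbtthm}.
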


It is known from \cite{LM72,S69} that $M(\lambda)$ can be extended to a bounded
operator acting between various Sobolev spaces. For the convenience of the reader we give a short proof based on a duality and
interpolation argument.

\begin{lemma} \label{le.interpolation}
Let $s\in[-\frac{3}{2},\frac{1}{2}]$ and $\lambda\in\rho(\AN)$.
Then $M(\lambda)$ can be extended to a bounded operator from
$H^s(\partial\Omega)$ to $H^{s+1}(\partial\Omega)$.
Moreover, the closure $\ov{M(\lambda)}$ in $L^2(\partial\Omega)$ is a compact operator in $L^2(\partial\Omega)$ with
$\ran(\ov{M(\lambda)})\subset H^1(\partial\Omega)$.
\end{lemma}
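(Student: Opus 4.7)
The plan is to establish boundedness of $M(\lambda)$ between Sobolev spaces of different orders by combining the known mapping property $M(\lambda)\colon H^{1/2}(\partial\Omega)\to H^{3/2}(\partial\Omega)$ from Proposition~\ref{gammamh21} with a duality argument and complex interpolation. For the final compactness statement, I will exploit the compact embedding $H^1(\partial\Omega)\hookrightarrow L^2(\partial\Omega)$ on the compact manifold $\partial\Omega$.

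First, I would record that by Proposition~\ref{gammamh21} the map $M(\lambda)\colon H^{1/2}(\partial\Omega)\to H^{3/2}(\partial\Omega)$ is bounded (this is just the statement that the Poisson operator for the Neumann problem produces $H^2$ solutions, which in turn is already implicit in the definition of $\dom T$). Next, by Proposition~\ref{gammaprop}(v) we have $M(\lambda)\subset M(\ov\lambda)^*$, and the same bound applied to $\ov\lambda\in\rho(\AN)$ gives $M(\ov\lambda)\in\cB(H^{1/2}(\partial\Omega),H^{3/2}(\partial\Omega))$. Dualising with respect to the pivot space $L^2(\partial\Omega)$, and using $(H^s(\partial\Omega))^*=H^{-s}(\partial\Omega)$, the Banach-space adjoint of $M(\ov\lambda)$ extends to a bounded operator $H^{-3/2}(\partial\Omega)\to H^{-1/2}(\partial\Omega)$. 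Because the duality pairing restricts to the $L^2$ inner product on $L^2$-elements, this Banach adjoint agrees on $H^{1/2}(\partial\Omega)$ with the Hilbert-space adjoint $M(\ov\lambda)^*$, hence with $M(\lambda)$ itself. Consequently $M(\lambda)$ admits a bounded extension $H^{-3/2}(\partial\Omega)\to H^{-1/2}(\partial\Omega)$.

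Having boundedness at the two endpoints $s=1/2$ and $s=-3/2$, I would invoke complex interpolation of Sobolev spaces on the compact manifold $\partial\Omega$, namely
\[
  [H^{-3/2}(\partial\Omega),H^{1/2}(\partial\Omega)]_\theta=H^{-3/2+2\theta}(\partial\Omega), \qquad
  [H^{-1/2}(\partial\Omega),H^{3/2}(\partial\Omega)]_\theta=H^{-1/2+2\theta}(\partial\Omega),
\]
for $\theta\in[0,1]$, see, e.g.\ \cite[Theorem~7.7]{LM72}. Setting $s=-3/2+2\theta$ this gives a bounded extension $M(\lambda)\colon H^s(\partial\Omega)\to H^{s+1}(\partial\Omega)$ for every $s\in[-3/2,1/2]$, which proves the first claim.

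For the second claim, specialising the previous step to $s=0$ produces a bounded operator $M(\lambda)\colon L^2(\partial\Omega)\to H^1(\partial\Omega)$. Since the original $M(\lambda)$ is defined on the dense subspace $H^{1/2}(\partial\Omega)$ of $L^2(\partial\Omega)$ and this extension agrees with it there, it coincides with the $L^2$-closure $\ov{M(\lambda)}$, and in particular $\ran\ov{M(\lambda)}\subset H^1(\partial\Omega)$. Composing with the compact embedding $H^1(\partial\Omega)\hookrightarrow L^2(\partial\Omega)$ (which holds because $\partial\Omega$ is a compact $C^\infty$-manifold, by Rellich's theorem) yields compactness of $\ov{M(\lambda)}$ in $L^2(\partial\Omega)$. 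The main subtlety, and the step I expect to be the main obstacle, is the duality argument — specifically, checking carefully that the Banach-space adjoint taken against the $L^2$ pivot really extends the operator $M(\lambda)$; the interpolation and compactness steps are then essentially routine on a smooth compact boundary.
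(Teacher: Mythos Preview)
Your proposal is correct and follows essentially the same route as the paper: boundedness of $M(\lambda)\colon H^{1/2}(\partial\Omega)\to H^{3/2}(\partial\Omega)$, a duality argument with respect to the $L^2$ pivot to get the extension $H^{-3/2}(\partial\Omega)\to H^{-1/2}(\partial\Omega)$, interpolation for intermediate $s$, and then the case $s=0$ together with the compact embedding $H^1(\partial\Omega)\hookrightarrow L^2(\partial\Omega)$ for the compactness of $\ov{M(\lambda)}$. The only minor difference is that the paper justifies the initial boundedness $H^{1/2}\to H^{3/2}$ via closability of $M(\lambda)$ in $L^2$ (Proposition~\ref{gammaprop}(v)) and a closed graph argument, whereas you appeal directly to elliptic regularity for the Neumann Poisson operator; both are valid, and you correctly flag the compatibility of the Banach adjoint with $M(\lambda)$ as the one step requiring care.
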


\begin{proof}
Denote by $\langle\,\cdot,\cdot\rangle_t$ the dual pairing of
$H^t(\partial\Omega)$
and $H^{-t}(\partial\Omega)$ for $t\ge0$, i.e.\ $\langle x,y\rangle_t$ is
defined for
$x\in H^t(\partial\Omega)$ and $y\in H^{-t}(\partial\Omega)$,
$\langle\,\cdot,\cdot\rangle_t$
is linear in the first and semi-linear in the second component and satisfies
\begin{equation} \label{dualpairing}
  \langle x,y\rangle_t = (x,y)
  \qquad \text{for }x\in H^t(\partial\Omega),\; y\in L^2(\partial\Omega),
\end{equation}
where $(\,\cdot,\cdot)$ denotes the inner product in $L^2(\partial\Omega)$.

In the following let $\lambda\in\rho(\AN)$.
It follows from Proposition~\ref{gammaprop}(v) that $M(\lambda)$ is closable in
$L^2(\partial\Omega)$ and from Proposition~\ref{gammaprop}(iii) that it maps
$H^{1/2}(\partial\Omega)$ into $H^{3/2}(\partial\Omega)$.
Therefore $M(\lambda)$ is closed and hence bounded from
$H^{1/2}(\partial\Omega)$
to $H^{3/2}(\partial\Omega)$.

The Banach space adjoint $(M(\ov\lambda))'$ of $M(\ov\lambda)$ is a bounded
operator
from $H^{-3/2}(\partial\Omega)$ to $H^{-1/2}(\partial\Omega)$,
where $(M(\ov\lambda))'$ is defined by
\begin{equation} \label{BAdjoint}
  \bigl\langle x,(M(\ov\lambda))'y\bigr\rangle_{1/2}
  = \bigl\langle M(\ov\lambda)x,y\bigr\rangle_{3/2},
  \qquad x\in H^{1/2}(\partial\Omega),\; y\in H^{-3/2}(\partial\Omega).
\end{equation}
Proposition~\ref{gammaprop}(v) yields that
\begin{equation} \label{HAdjoint}
  \bigl(M(\ov\lambda)x,y\bigr) = \bigl(x,M(\lambda)y\bigr), \qquad x,y\in
H^{1/2}(\partial\Omega).
\end{equation}
Combining \eqref{dualpairing}, \eqref{BAdjoint} and \eqref{HAdjoint} we obtain
for
$x,y\in H^{1/2}(\partial\Omega)$ that
\begin{align*}
  \bigl\langle x,M(\lambda)y\bigr\rangle_{1/2}
  &= \bigl(x,M(\lambda)y\bigr)
  = \bigl(M(\ov\lambda)x,y\bigr) \\[1ex]
  &= \bigl\langle M(\ov\lambda)x,y\bigr\rangle_{3/2} \\[1ex]
  &= \bigl\langle x,(M(\ov\lambda))'y\bigr\rangle_{1/2}.
\end{align*}
This implies that $M(\lambda)y=(M(\ov\lambda))'y$ for $y\in
H^{1/2}(\partial\Omega)$.
Hence the bounded operator $(M(\ov\lambda))'\colon H^{-3/2}(\partial\Omega)
\to H^{-1/2}(\partial\Omega)$ is an extension of
$M(\lambda)\colon H^{1/2}(\partial\Omega)\to H^{3/2}(\partial\Omega)$.
Now interpolation (see, e.g.\ \cite[Theorems~5.1 and 7.7]{LM72}) implies that
\begin{equation}\label{Mextbdd}
  (M(\ov\lambda))'\big|_{H^s(\partial\Omega)}\colon
  H^s(\partial\Omega)\to H^{s+1}(\partial\Omega)
\end{equation}
is bounded for $s\in[-\frac{3}{2},\frac{1}{2}]$.

Since $\ov{M(\lambda)} = (M(\ov\lambda))'|_{L^2(\partial\Omega)}$, we know
from \eqref{Mextbdd} that $\ov{M(\lambda)}$ is bounded from
$L^2(\partial\Omega)$
to $H^1(\partial\Omega)$.
Together with the compactness of the embedding of $H^1(\partial\Omega)$ into
$L^2(\partial\Omega)$ (see, e.g.\ \cite[Theorem~7.10]{W}) this shows
that $\ov{M(\lambda)}$ is a compact operator in $L^2(\partial\Omega)$.
\end{proof}

In \cite{BL07} and \cite{BLLLP10} quasi boundary triples for elliptic operators
were also
studied in the framework of the Beals space $\cD_1(\Omega)$ when $\Omega$ is
bounded with a smooth boundary.
In this setting sufficient conditions on the parameter $\Theta$ in
$L^2(\partial\Omega)$ that ensure self-adjointness of the corresponding elliptic
operator
\begin{equation*}
  A_\Theta=\cL\upharpoonright\bigl\{f\in \cD_1(\Omega)\colon \Gamma\hat
f\in\Theta\bigr\}
\end{equation*}
were obtained in \cite[Theorem~4.8]{BL07}. The next result gives a sufficient
condition
on $\Theta$ in the $H^2$-framework which also covers a large class of
Robin type
boundary conditions; cf.\ Corollary~\ref{condsacor} below. We note that $\Omega$
is allowed
to be unbounded but $\partial\Omega$ is assumed to be compact and smooth.

% -------------------------------------------------------------------
\begin{theorem}\label{condsa}
Let $\{L^2(\partial\Omega),\Gamma_0,\Gamma_1\}$ be the quasi boundary triple
from Theorem~\ref{qbtthm} and $\Gamma=(\Gamma_0,\Gamma_1)^\top$.
Let $\Theta$ be a self-adjoint relation in $L^2(\partial\Omega)$ such that
$0\notin\sess(\Theta)$ and
\[
  \Theta^{-1}\bigl(H^1(\partial\Omega)\bigr) \subset H^{1/2}(\partial\Omega).
\]
Then the realization $A_\Theta=\cL\upharpoonright\{f\in\dom T\colon
\Gamma\hat f\in\Theta\}$
is self-adjoint in $L^2(\Omega)$.
In particular, if $B$ is a bounded operator in $L^2(\partial\Omega)$ with
$B(H^1(\partial\Omega)) \subset H^{1/2}(\partial\Omega)$, then the realization
\begin{equation*}
  A_{B^{-1}}f=\cL f,\quad \dom A_{B^{-1}}=\left\{f\in \dom T\colon
  B\bigl(f|_{\partial\Omega}\bigr)=\frac{\partial
f}{\partial\nu_{\cL}}\!\Bigm|_{\partial\Omega}\right\}
\end{equation*}
is a self-adjoint operator in $L^2(\Omega)$.
\end{theorem}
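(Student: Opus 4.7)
The plan is to deduce Theorem~\ref{condsa} as a direct application of the abstract self-adjointness criterion Theorem~\ref{th.A_Th_self_adj} to the quasi boundary triple $\{L^2(\partial\Omega),\Gamma_0,\Gamma_1\}$ supplied by Theorem~\ref{qbtthm}, for which $A_1=\ker\Gamma_1=\AD$ is self-adjoint and $\cG_0=\ran\Gamma_0=H^{1/2}(\partial\Omega)$. Four hypotheses need checking: self-adjointness of $A_1$, compactness $\ov{M(\lambda_0)}\in\sS_\infty(L^2(\partial\Omega))$ for some $\lambda_0\in\dC\setminus\dR$, the spectral condition $0\notin\sess(\Theta)$, and the inclusion $\Theta^{-1}(\ran\ov{M(\lambda_\pm)})\subset\cG_0$ for some $\lambda_\pm\in\dC^\pm$. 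The first is built into the triple; the second is the last assertion of Lemma~\ref{le.interpolation}; the third is part of the hypothesis on $\Theta$. For the fourth, Lemma~\ref{le.interpolation} also yields $\ran\ov{M(\lambda)}\subset H^1(\partial\Omega)$ for every $\lambda\in\rho(\AN)$, so for any $\lambda_\pm\in\dC^\pm\subset\rho(\AN)$ the standing assumption $\Theta^{-1}(H^1(\partial\Omega))\subset H^{1/2}(\partial\Omega)$ gives $\Theta^{-1}(\ran\ov{M(\lambda_\pm)})\subset\Theta^{-1}(H^1(\partial\Omega))\subset\cG_0$. Theorem~\ref{th.A_Th_self_adj} then delivers the self-adjointness of $A_\Theta$.

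For the ``in particular'' statement I would set $\Theta\defeq B^{-1}$. Since $B$ is bounded and self-adjoint, $\Theta$ is a self-adjoint linear relation in $L^2(\partial\Omega)$ with $\mul\Theta=\ker B$; its operator part $\Theta_{\rm op}$ on $(\ker B)^\perp$ admits the bounded, everywhere-defined inverse $B|_{(\ker B)^\perp}$, so $0\in\rho(\Theta_{\rm op})$ and in particular $0\notin\sess(\Theta)$. The hypothesis $B(H^1(\partial\Omega))\subset H^{1/2}(\partial\Omega)$ is literally $\Theta^{-1}(H^1(\partial\Omega))\subset H^{1/2}(\partial\Omega)$, so the main part applies. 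Unpacking $\binom{\Gamma_0 f}{\Gamma_1 f}\in\Theta=B^{-1}$ gives $\Gamma_0 f=B\,\Gamma_1 f$, which is exactly the boundary condition $\partial f/\partial\nu_{\cL}|_{\partial\Omega}=B(f|_{\partial\Omega})$ defining $\dom A_{B^{-1}}$.

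I do not anticipate a serious obstacle: the argument is structurally a matching exercise between the abstract framework of Section~\ref{3.2} and the PDE setting of Theorem~\ref{qbtthm}, with Lemma~\ref{le.interpolation} furnishing the smoothing property $\ran\ov{M(\lambda)}\subset H^1(\partial\Omega)$ that bridges them. The only mildly delicate point is the verification that $0\notin\sess(B^{-1})$ for bounded self-adjoint $B$, which has to be carried out via the operator-part decomposition of the self-adjoint linear relation $B^{-1}$.
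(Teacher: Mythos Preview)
Your proposal is correct and follows essentially the same approach as the paper: the paper's proof consists of a single sentence invoking Theorem~\ref{th.A_Th_self_adj} and Remark~\ref{remremrem}, using Lemma~\ref{le.interpolation} for the inclusion $\ran\ov{M(\lambda)}\subset H^1(\partial\Omega)$, which is exactly the chain of verifications you spell out. Your treatment of the ``in particular'' clause via the operator-part decomposition of $\Theta=B^{-1}$ is what the paper encapsulates by the reference to Remark~\ref{remremrem}.
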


\begin{proof}
We can directly apply Theorem~\ref{th.A_Th_self_adj} and Remark~\ref{remremrem}
since $\ran\ov{M(\lambda)} \subset H^1(\partial\Omega)$ for all $\lambda\in\rho(\AN)$
by Lemma~\ref{le.interpolation}.
\end{proof}

The next corollary is an immediate consequence of Theorem~\ref{condsa}.
In includes, in particular, classical Robin boundary conditions.

% -------------------------------------------------------------------
\begin{corollary}\label{condsacor}
Let $\beta\in C^1(\partial\Omega)$ be a real-valued function on
$\partial\Omega$ and $k\in C^1(\partial\Omega\times\partial\Omega)$ a
symmetric kernel on $\partial\Omega$, i.e.\ $k(x,y)=\ov{k(y,x)}$
for $x,y\in\partial\Omega$.
Then the realization
\[
  A_{B^{-1}}f=\cL f, \quad
  \dom A_{B^{-1}}=\left\{f\in\dom T\colon
  B\bigl(f|_{\partial\Omega}) = \frac{\partial f}{\partial\nu_{\cL}}\!
  \Bigm|_{\partial\Omega}\right\},
\]
where
\[
  (B\varphi)(x) = \beta\varphi(x) + \int_{\partial\Omega} k(x,y)\varphi(y)dy,
  \qquad \varphi \in L^2(\partial\Omega),
\]
is a self-adjoint operator in $L^2(\Omega)$.
\end{corollary}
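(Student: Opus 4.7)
The plan is to deduce this corollary directly from the second statement of Theorem~\ref{condsa} by taking $\Theta = B^{-1}$. It therefore suffices to verify that $B$ is a bounded self-adjoint operator on $L^2(\partial\Omega)$ satisfying $B(H^1(\partial\Omega)) \subset H^{1/2}(\partial\Omega)$. Once $B$ is bounded and self-adjoint, $B^{-1}$ is automatically a self-adjoint linear relation in $L^2(\partial\Omega)$, and $0\notin\sess(B^{-1})$ holds for free: the operator part $(B^{-1})_{\rm op}$ acts on $(\ker B)^\perp$, and its essential spectrum is $\{1/\lambda : \lambda \in \sess(B)\setminus\{0\}\}$, which is bounded and hence disjoint from $0$.

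I decompose $B = M_\beta + K$, where $M_\beta\varphi = \beta\varphi$ and $K\varphi(x) = \int_{\partial\Omega} k(x,y)\varphi(y)\,dy$. Because $\partial\Omega$ is compact and $\beta\in C^1(\partial\Omega)$ is real-valued, $M_\beta$ is a bounded self-adjoint multiplication operator on $L^2(\partial\Omega)$. The operator $K$ is Hilbert--Schmidt (in particular bounded and compact) since $k$ is continuous on the compact set $\partial\Omega\times\partial\Omega$, and the hermitian symmetry $k(x,y)=\overline{k(y,x)}$ gives $K=K^*$. Thus $B=M_\beta+K$ is bounded and self-adjoint.

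To verify the mapping property, I would treat the two summands separately. For the multiplicative part, multiplication by a $C^1$-function on the smooth compact manifold $\partial\Omega$ preserves $H^1(\partial\Omega)$ by the standard Sobolev multiplier estimate in local charts (using a partition of unity and the product rule). For the integral part, since $k\in C^1(\partial\Omega\times\partial\Omega)$, differentiation under the integral sign in local coordinates on $\partial\Omega$, combined with uniform continuity of the tangential derivatives of $k$, shows that $K$ maps $L^2(\partial\Omega)$ into $C^1(\partial\Omega)$, and in particular into $H^1(\partial\Omega)$. Consequently $B(H^1(\partial\Omega))\subset H^1(\partial\Omega)\subset H^{1/2}(\partial\Omega)$, which is in fact stronger than needed.

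With these two ingredients Theorem~\ref{condsa} applies and yields that $A_{B^{-1}}$ is self-adjoint in $L^2(\Omega)$. The argument is essentially routine; the only step that requires minor care is the differentiation under the integral to obtain $K\colon L^2(\partial\Omega)\to C^1(\partial\Omega)$, which depends on the compactness of $\partial\Omega$ and the $C^1$-regularity of $k$, and the standard multiplier result for $M_\beta$ on $H^1(\partial\Omega)$.
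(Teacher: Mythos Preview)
Your proposal is correct and takes the same approach the paper intends: the paper states this corollary as an immediate consequence of Theorem~\ref{condsa} without giving a proof, and you have supplied exactly the routine verification (boundedness and self-adjointness of $B$, plus the mapping property $B(H^1(\partial\Omega))\subset H^{1/2}(\partial\Omega)$) that the second part of Theorem~\ref{condsa} requires. Your digression on $0\notin\sess(B^{-1})$ is not needed, since the ``In particular'' clause of Theorem~\ref{condsa} already packages that, but it is harmless.
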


Before we continue to investigate resolvent differences of self-adjoint
realizations of $\cL$,
we need the following general lemma on the singular values of operators mapping
into
Sobolev spaces; see also \cite{BLLLP10}. The proof is essentially an application
of
results on the asymptotic behaviour of eigenvalues of the Laplace--Beltrami
operator on
compact manifolds; for similar ideas see the proof of Proposition~5.4.1 in
\cite{Ag90}.

% -------------------------------------------------------------------
\begin{lemma} \label{le.s_emb}
Let $\Sigma$ be an $n-1$-dimensional compact manifold without boundary, let
$\cK$ be a Hilbert space and $B\in\cB(\cK,H^{r_1}(\Sigma))$
with $\ran B \subset H^{r_2}(\Sigma)$ where $r_2>r_1\geq 0$. Then $B$ is compact and
its singular values $s_k$ satisfy
\[
  s_k(B) = O\bigl(k^{-\frac{r_2-r_1}{n-1}}\bigr), \quad k\to\infty.
\]
In particular, $B\in\sS_{\frac{r_2-r_1}{n-1},\infty}(\cK,H^{r_1}(\Sigma))$ and
$B\in\sS_p(\cK,H^{r_1}(\Sigma))$
for $p>\frac{n-1}{r_2-r_1}$.
\end{lemma}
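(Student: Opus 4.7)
The plan is to reduce the estimate to the spectral asymptotics of the Laplace--Beltrami operator $\Delta_\Sigma$ on the compact manifold $\Sigma$. First I would observe that, since $B$ is bounded (hence closed) as an operator from $\cK$ into $H^{r_1}(\Sigma)$ and the embedding $H^{r_2}(\Sigma)\hookrightarrow H^{r_1}(\Sigma)$ is continuous, the operator $B$ regarded as a map from $\cK$ into $H^{r_2}(\Sigma)$ is everywhere defined and closed; by the closed graph theorem it is bounded, i.e.\ $B\in\cB(\cK,H^{r_2}(\Sigma))$. Hence $B$ factors as
\[
  B = \iota_{r_2,r_1}\circ \wt B,
\]
where $\wt B\in\cB(\cK,H^{r_2}(\Sigma))$ and $\iota_{r_2,r_1}\colon H^{r_2}(\Sigma)\to H^{r_1}(\Sigma)$ is the canonical embedding.

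Next I would analyse $\iota_{r_2,r_1}$. On the compact $(n-1)$-dimensional manifold $\Sigma$ without boundary, the self-adjoint operator $\Lambda\defeq (I-\Delta_\Sigma)^{1/2}$ in $L^2(\Sigma)$ has a compact inverse, and the Sobolev norm on $H^{s}(\Sigma)$ is equivalent to $\|\Lambda^{s}\,\cdot\,\|_{L^2(\Sigma)}$ for all $s\in\RR$. By Weyl's law for the Laplace--Beltrami operator on an $(n-1)$-dimensional compact manifold, the eigenvalues $\mu_k$ of $\Lambda$ satisfy $\mu_k \asymp k^{1/(n-1)}$ as $k\to\infty$. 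Identifying $H^{r_1}(\Sigma)$ and $H^{r_2}(\Sigma)$ isometrically with $L^2(\Sigma)$ via $\Lambda^{r_1}$ and $\Lambda^{r_2}$, respectively, the embedding $\iota_{r_2,r_1}$ is unitarily equivalent to $\Lambda^{-(r_2-r_1)}\in\cB(L^2(\Sigma))$, which is a compact non-negative operator with singular values
\[
  s_k(\iota_{r_2,r_1}) = \mu_k^{-(r_2-r_1)} = O\bigl(k^{-(r_2-r_1)/(n-1)}\bigr),\qquad k\to\infty.
\]

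Combining these two steps with the standard inequality $s_k(AC)\le \|C\|\,s_k(A)$ from \eqref{sest}, applied to $A=\iota_{r_2,r_1}$ and $C=\wt B$, yields
\[
  s_k(B) \le \|\wt B\|_{\cK\to H^{r_2}(\Sigma)}\,s_k(\iota_{r_2,r_1})
  = O\bigl(k^{-(r_2-r_1)/(n-1)}\bigr),\qquad k\to\infty,
\]
which is exactly the claim $B\in\sS_{(r_2-r_1)/(n-1),\infty}(\cK,H^{r_1}(\Sigma))$. The inclusion $B\in\sS_p(\cK,H^{r_1}(\Sigma))$ for $p>(n-1)/(r_2-r_1)$ then follows from Lemma~\ref{splemma}(ii). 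The main (non-routine) ingredient is the Weyl asymptotics for $\Delta_\Sigma$ together with the identification of Sobolev spaces on $\Sigma$ as domains of fractional powers of $\Lambda$; everything else is a factorization argument using the closed graph theorem and the ideal property of singular values.
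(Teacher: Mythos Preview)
Your proof is correct and follows essentially the same approach as the paper: factor $B$ through the embedding $H^{r_2}(\Sigma)\hookrightarrow H^{r_1}(\Sigma)$ via the closed graph theorem, and control the singular values of that embedding using powers of $(I-\Delta_\Sigma)$ together with the Weyl asymptotics $\lambda_k(I-\Delta_\Sigma)\sim Ck^{2/(n-1)}$. The paper writes the factorization as $B=\Lambda_{r_1,r_2}^{-1}(\Lambda_{r_1,r_2}B)$ with $\Lambda_{r_1,r_2}=(I-\Delta_\Sigma)^{(r_2-r_1)/2}$ rather than via the embedding, but this is only a cosmetic difference.
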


\begin{proof}
Let $\Lambda_{r_1,r_2} \defeq (I - \Delta_{\rm LB}^{\Sigma})^\frac{r_2-r_1}2$,
where $\Delta_{\rm LB}^{\Sigma}$ denotes the Laplace--Beltrami operator on $\Sigma$.
The operator $\Lambda_{r_1,r_2}$ is an isometric isomorphism from
$H^{r_2}(\Sigma)$
onto $H^{r_1}(\Sigma)$. From \cite[(5.39) and the text below]{Ag90} we obtain for the asymptotics of the
eigenvalues $\lambda_k(I-\Delta_{\rm LB}^\Sigma) \sim Ck^{\frac{2}{n-1}}$ with some constant $C$.
This implies
\[
  s_k(\Lambda_{r_1,r_2}^{-1}) = O\bigl(k^{-\frac{r_2 -r_1}{n-1}}\bigr), \quad
k\to\infty.
\]
We can write $B$ in the form
\begin{equation}\label{blambda}
  B = \Lambda_{r_1,r_2}^{-1}(\Lambda_{r_1,r_2} B).
\end{equation}
The operator $B$ is closed as an operator from $\cK$ into $H^{r_1}(\Sigma)$,
hence also closed as an operator from $\cK$ into $H^{r_2}(\Sigma)$,
which implies that it is bounded from $\cK$ into $H^{r_2}(\Sigma)$.
Therefore the operator $\Lambda_{r_1,r_2} B$ is bounded from $\cK$ into
$H^{r_1}(\Sigma)$, and hence the assertions follow from \eqref{blambda}.
\end{proof}

The next result is essentially a consequence of the previous lemma,
Lemma~\ref{le.interpolation} and general properties of the $\gamma$-field
and the Weyl function established in Section~\ref{3.1}.

% -------------------------------------------------------------------
\begin{proposition}\label{gammamh2}
Let $\{L^2(\partial\Omega),\Gamma_0,\Gamma_1\}$ be the quasi boundary triple
from Theorem~\ref{qbtthm}. Then for $\lambda\in\rho(\AN)$,
the associated $\gamma$-field $\gamma$, the Weyl function $M$ and the
closures $\overline{M(\lambda)}$, $\overline{\Imag M(\lambda)}$ satisfy
\begin{itemize}
\item[(i)]
$\dis \gamma(\lambda)^*
\in\sS_{\frac{3}{2(n-1)},\infty}\bigl(L^2(\Omega),L^2(\partial\Omega)\bigr)$;
\item[(ii)]
$\dis M(\lambda)
\in\sS_{\frac{1}{n-1},\infty}\bigl(H^{1/2}(\partial\Omega)\bigr)$;
\item[(iii)]
$\dis \overline{\Im
M(\lambda)}\in\sS_{\frac{3}{n-1},\infty}(L^2(\partial\Omega))$;
\item[(iv)]
$\dis \overline{M(\lambda)}\in\sS_{\frac{1}{n-1},\infty}(L^2(\partial\Omega))$.
\end{itemize}
\end{proposition}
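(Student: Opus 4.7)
The plan is to deduce all four singular-value bounds from two inputs: Lemma~\ref{le.interpolation}, which tells us that $M(\lambda)$ and its closure gain one unit of Sobolev regularity on $\partial\Omega$, together with the boundary values of $\gamma(\bar\lambda)^*$ lying in $H^{3/2}(\partial\Omega) = \cG_1$ by Proposition~\ref{gammaprop}(ii); and Lemma~\ref{le.s_emb}, which converts such a gain of regularity into a power-law decay of singular values on the $(n-1)$-dimensional manifold $\partial\Omega$. The product rule in Lemma~\ref{splemma}(iii) then bridges (i) and (iii).

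For (i), I would argue that $\gamma(\lambda)^*\in\cB(L^2(\Omega),L^2(\partial\Omega))$ with range contained in $H^{3/2}(\partial\Omega)$: the range condition is Proposition~\ref{gammaprop}(ii) applied at $\bar\lambda$, combined with Proposition~\ref{pr.suff_cond}(ii) (which transports the ideal property through the whole resolvent set) to make sure the statement is independent of $\lambda$. Applying Lemma~\ref{le.s_emb} with $\cK=L^2(\Omega)$, $r_1=0$, $r_2=3/2$, and $\Sigma=\partial\Omega$ yields $s_k(\gamma(\lambda)^*)=O(k^{-\frac{3}{2(n-1)}})$, which is (i). For (ii), Proposition~\ref{gammaprop}(iii) gives $M(\lambda)\colon H^{1/2}(\partial\Omega)\to H^{3/2}(\partial\Omega)$ boundedly (closedness plus closed graph), so Lemma~\ref{le.s_emb} with $r_1=1/2$, $r_2=3/2$ gives $s_k(M(\lambda))=O(k^{-\frac{1}{n-1}})$. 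For (iv), Lemma~\ref{le.interpolation} gives $\overline{M(\lambda)}\in\cB(L^2(\partial\Omega),H^1(\partial\Omega))$, so Lemma~\ref{le.s_emb} with $r_1=0$, $r_2=1$ gives $s_k(\overline{M(\lambda)})=O(k^{-\frac{1}{n-1}})$.

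For (iii), I would use Proposition~\ref{gammaprop}(v) with $\mu=\lambda$, i.e.\ the identity
\[
  \Im M(\lambda) \;=\; \Im\lambda\,\cdot\,\gamma(\lambda)^*\gamma(\lambda)
  \qquad\text{on } \cG_0=H^{1/2}(\partial\Omega),
\]
valid for every $\lambda\in\rho(\AN)$ (trivially giving $0$ when $\lambda\in\RR$). Taking closures gives $\overline{\Im M(\lambda)}=\Im\lambda\cdot\gamma(\lambda)^*\overline{\gamma(\lambda)}$. Now $\overline{\gamma(\lambda)}=(\gamma(\lambda)^*)^*$ has the same singular values as $\gamma(\lambda)^*$, so both factors lie in $\sS_{\frac{3}{2(n-1)},\infty}$ by (i). Lemma~\ref{splemma}(iii) then places the product in $\sS_{\frac{3}{n-1},\infty}(L^2(\partial\Omega))$, proving (iii).

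I do not expect any genuine obstacle: the only minor point of care is to confirm that $\gamma(\lambda)^*$ viewed as a map into $H^{3/2}(\partial\Omega)$ is bounded (closed-graph argument, exactly as in the proof of Proposition~\ref{pr.suff_cond2}) so that Lemma~\ref{le.s_emb} applies, and to verify that the identity in Proposition~\ref{gammaprop}(v) does give the cleaner formula $\overline{\Im M(\lambda)}=\Im\lambda\cdot\gamma(\lambda)^*\overline{\gamma(\lambda)}$ after closure (which follows from boundedness of $\gamma(\lambda)^*$). Everything else is a bookkeeping combination of (i), Lemma~\ref{le.interpolation}, Lemma~\ref{le.s_emb}, and the product rule for the $\sS_{r,\infty}$-scale.
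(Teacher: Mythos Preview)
Your proposal is correct and follows essentially the same route as the paper's proof: each of (i), (ii), (iv) is obtained by identifying the appropriate Sobolev gain (range of $\gamma(\lambda)^*$ in $H^{3/2}(\partial\Omega)$ via Proposition~\ref{gammaprop}(ii); $M(\lambda)$ and $\overline{M(\lambda)}$ mapping into $H^{3/2}$ and $H^1$ respectively via Lemma~\ref{le.interpolation}) and then applying Lemma~\ref{le.s_emb} with the corresponding values of $r_1,r_2$, while (iii) is deduced from the factorization $\overline{\Im M(\lambda)}=(\Im\lambda)\,\gamma(\lambda)^*\overline{\gamma(\lambda)}$ in Proposition~\ref{gammaprop}(v) together with (i) and the product rule Lemma~\ref{splemma}(iii). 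The only superfluous step in your write-up is the appeal to Proposition~\ref{pr.suff_cond}(ii) in part (i): Proposition~\ref{gammaprop}(ii) already gives the range inclusion for every $\lambda\in\rho(\AN)$, so no transport through the resolvent set is needed.
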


\begin{proof}
Note that $\partial\Omega$ is a finite union of $C^\infty$-manifolds.

Assertion (i) follows from Lemma~\ref{le.s_emb} with $r_1=0$ and $r_2 =
\frac{3}{2}$ since
$\gamma(\lambda)^*$ is a bounded operator from $\cK=L^2(\Omega)$ to
$L^2(\partial\Omega)$ with
$\ran\Gamma_1 \subset H^{3/2}(\partial\Omega)$ by
Proposition~\ref{gammaprop}(ii).

(ii) By Lemma~\ref{le.interpolation}, the operator $M(\lambda)$,
$\lambda\in\rho(\AN)$,
is bounded as an operator in $H^{1/2}(\partial\Omega)$.
Hence Lemma~\ref{le.s_emb} applied with $\cK=H^{1/2}(\partial\Omega)$,
$r_1 = \frac{1}{2}$ and $r_2 = \frac{3}{2}$ yields the assertion.

(iii) From Proposition~\ref{gammaprop}(v) we obtain the relation
\[
  \overline{\Im M(\lambda)}=(\Im\lambda)
\,\gamma(\lambda)^*\overline{\gamma(\lambda)}.
\]
It follows from (i) and Lemma~\ref{splemma}(iii) that the right-hand side is in
\[
  \sS_{\frac{3}{2(n-1)},\infty}\cdot
  \sS_{\frac{3}{2(n-1)},\infty}
  = \sS_{\frac{3}{n-1},\infty}.
\]

(iv) The statement follows from Lemmas~\ref{le.interpolation} and
\ref{le.s_emb} with $\cK=L^2(\partial\Omega)$, $r_1=0$ and $r_2=1$.
\end{proof}

\begin{remark}
It is not difficult to check that
$\{L^2(\partial\Omega),\Gamma_1,-\Gamma_0\}$ is also a quasi boundary triple for
the operator $A^*$.
The corresponding Weyl function $\wh M$ is (up to a minus sign) the Dirichlet to
Neumann map from
$H^{3/2}(\partial\Omega)\rightarrow H^{1/2}(\partial\Omega)$, i.e.\
for $\lambda\in\rho(\AD)$ the operator $\wh M(\lambda)$ maps
the Dirichlet boundary value $f_\lambda(\varphi)\vert_{\partial\Omega}$ of the
solution
$f_\lambda(\varphi)\in\dom T$ of $\cL h=\lambda h$, $h\vert_{\partial\Omega}=\varphi$,
onto the
(minus) Neumann boundary value
$-\frac{\partial f_\lambda(\varphi)}{\partial\nu_\cL}\vert_{\partial\Omega}$.
One of the main reasons that we do not use the quasi boundary triple
$\{L^2(\partial\Omega),\Gamma_1,-\Gamma_0\}$ here is that the values of the
corresponding
Weyl function $\wh M$ are unbounded operators in $L^2(\partial\Omega)$.
\end{remark}

% *******************************************************************
\subsection{Spectral estimates for resolvent differences of
self-adjoint elliptic operators on bounded and exterior domains}
\label{4.2}

Throughout this section let $\{L^2(\partial\Omega),\Gamma_0,\Gamma_1\}$ be the
quasi boundary triple from Theorem~\ref{qbtthm} with corresponding $\gamma$-field
and Weyl function from Proposition~\ref{gammamh21}.
If $\Omega$ is unbounded, let $\Omega'$ be as in Definition~\ref{domt};
if $\Omega$ is bounded, set $\Omega'\defeq\Omega$.
For a linear relation $\Theta$ in $L^2(\partial\Omega)$
the corresponding realization $A_\Theta$ of $\cL$ is given by
\begin{align*}
  A_\Theta f &= \cL f, \\[1ex]
  \dom A_\Theta
  &= \left\{f\in H^1(\Omega)\colon
  \cL f\in L^2(\Omega),\,f|_{\Omega'}\in H^2(\Omega'),\,
  \begin{pmatrix} \dfrac{\partial f}{\partial\nu_\cL}\!\Bigm|_{\partial\Omega}
  \\[2ex]
  f|_{\partial\Omega} \end{pmatrix} \in \Theta\right\};
\end{align*}
cf.\ \eqref{atheta}, \eqref{athetaop} and Theorem~\ref{qbtthm}.
A sufficient condition for the self-adjointness of $A_\Theta$ was given in Theorem~\ref{condsa}. In the
following we
apply the general results from Section~\ref{3.3} to resolvent differences of
self-adjoint
realizations of the elliptic differential expression $\cL$ in $L^2(\Omega)$.
The statements in the next three theorems are consequences of
Proposition~\ref{gammamh2} and
Theorems~\ref{th.resolv_diff1}, \ref{th.resolv_diff3}
and \ref{th.resolv_diff4}, respectively.

% --------------------------------------------------------------------
\begin{theorem}\label{thm1}
Let $\AN$ be the Neumann operator associated with $\cL$ and let
$\Theta$ be a self-adjoint relation in $L^2(\partial\Omega)$ such
that $0\notin\sess(\Theta)$ and $A_\Theta$ is a self-adjoint operator.
Then for all $\lambda\in\rho(A_\Theta)\cap\rho(\AN)$ the singular values $s_k$
of the
resolvent difference
\begin{equation}\label{resdiff1}
  (A_\Theta-\lambda)^{-1} - (\AN-\lambda)^{-1}
\end{equation}
satisfy $s_k = O\bigl(k^{-\frac{3}{n-1}}\bigr)$, $k\to\infty$, and the
expression in \eqref{resdiff1}
is in $\sS_p(L^2(\Omega))$ for all $p>\frac{n-1}{3}$.
\end{theorem}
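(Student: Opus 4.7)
The plan is to apply Theorem~\ref{th.resolv_diff1} directly, with the class of operator ideals chosen as $\frA = \sS_{\frac{3}{2(n-1)},\infty}$. To do this, I need to verify the three hypotheses of that theorem for the quasi boundary triple $\{L^2(\partial\Omega),\Gamma_0,\Gamma_1\}$ from Theorem~\ref{qbtthm}, noting that $A_0 = \ker\Gamma_0 = \AN$.

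First, hypothesis (i) of Theorem~\ref{th.resolv_diff1} (that $\overline{M(\lambda_0)}$ is compact for some $\lambda_0\in\dC\setminus\dR$) is immediate from Proposition~\ref{gammamh2}(iv), which gives the stronger conclusion $\overline{M(\lambda)}\in\sS_{\frac{1}{n-1},\infty}(L^2(\partial\Omega))\subset\sS_\infty(L^2(\partial\Omega))$ for all $\lambda\in\rho(\AN)$. Second, hypothesis (ii) asks that $\gamma(\lambda_1)^*\in\frA^*(\cH,\cG)$ for some $\lambda_1\in\rho(\AN)$; since $s_k(A)=s_k(A^*)$ implies $\sS_{r,\infty}^*=\sS_{r,\infty}$, we have $\frA^*=\sS_{\frac{3}{2(n-1)},\infty}$, and Proposition~\ref{gammamh2}(i) gives exactly this. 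Third, hypothesis (iii) ($0\notin\sess(\Theta)$ and $A_\Theta=A_\Theta^*$) is assumed in the statement of Theorem~\ref{thm1}.

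Applying Theorem~\ref{th.resolv_diff1} therefore yields
\[
  (A_\Theta-\lambda)^{-1}-(\AN-\lambda)^{-1}
  \in (\frA\cdot\frA^*)(L^2(\Omega))
  = \bigl(\sS_{\frac{3}{2(n-1)},\infty}\cdot\sS_{\frac{3}{2(n-1)},\infty}\bigr)(L^2(\Omega))
\]
for every $\lambda\in\rho(A_\Theta)\cap\rho(\AN)$. The product of ideals on the right is computed by Lemma~\ref{splemma}(iii) to be $\sS_{\frac{3}{n-1},\infty}(L^2(\Omega))$, which is precisely the statement $s_k = O(k^{-\frac{3}{n-1}})$ as $k\to\infty$. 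Finally, the $\sS_p$ conclusion follows from Lemma~\ref{splemma}(ii), which gives $\sS_{\frac{3}{n-1},\infty}\subset\sS_p$ whenever $p>\frac{n-1}{3}$.

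There is no real obstacle here: the theorem is essentially a corollary of the abstract Theorem~\ref{th.resolv_diff1}, once the singular value estimates for $\gamma(\lambda)^*$ and $\overline{M(\lambda)}$ from Proposition~\ref{gammamh2} are in hand. The only minor point to double-check is that the exponent $\frac{3}{2(n-1)}$ in $\frA$ is the right choice, but this is forced: we want $\gamma(\lambda)^*\in\frA^*$, and the best available information is Proposition~\ref{gammamh2}(i), so $\frA = \sS_{\frac{3}{2(n-1)},\infty}$ is optimal within this framework, and the product rule doubles the exponent in the denominator to yield the claimed rate $\frac{3}{n-1}$.
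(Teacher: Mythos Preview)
Your proof is correct and follows essentially the same approach as the paper: apply Theorem~\ref{th.resolv_diff1} with $\frA=\sS_{\frac{3}{2(n-1)},\infty}$, using Proposition~\ref{gammamh2}(i) for the $\gamma$-field estimate and Lemma~\ref{splemma}(iii),(ii) to identify the resulting ideal. You are slightly more explicit than the paper in verifying hypothesis~(i) of Theorem~\ref{th.resolv_diff1} (compactness of $\overline{M(\lambda_0)}$), which the paper's proof leaves implicit.
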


\begin{proof}
By Proposition~\ref{gammamh2}(i) we have
$\gamma(\lambda)^*\in\sS_{\frac{3}{2(n-1)},\infty}(L^2(\Omega),
L^2(\partial\Omega))$.
Hence we can apply Theorem~\ref{th.resolv_diff1}, which yields that the
resolvent difference in \eqref{resdiff1} belongs to
\[
  \sS_{\frac{3}{2(n-1)},\infty}\cdot
  \sS_{\frac{3}{2(n-1)},\infty}
  = \sS_{\frac{3}{n-1},\infty} \subset \sS_p, \qquad p>\frac{n-1}{3},
\]
where we used Lemma~\ref{splemma}(iii) and (ii).
\end{proof}

As an immediate consequence of Theorem~\ref{thm1}
we obtain that the essential spectra of $A_\Theta$ and $\AN$ coincide,
\begin{equation*}
 \sess(A_\Theta)=\sess(\AN).
\end{equation*}
In the case of a bounded domain these sets are empty, in the unbounded case
the following proposition shows how
close eigenvalues of $A_\Theta$ have to be to eigenvalues of $\AN$.

% --------------------------------------------------------------------
\begin{proposition}\label{cor0}
Let $\Omega$ be unbounded, let $\AN$ be the Neumann operator associated with $\cL$ and let
$\Theta$ be a self-adjoint relation in $L^2(\partial\Omega)$ such
that $0\notin\sess(\Theta)$ and $A_\Theta$ is a self-adjoint operator.
If\, $\lambda_k$, $k=1,2,\dots$, are isolated eigenvalues of $A_\Theta$
converging
to some $\gamma\in\RR$, then there exist numbers $\mu_k$, $k=1,2,\dots$, which
are
isolated eigenvalues $\mu_k$, $k=1,2,\dots$, of $\AN$ or equal to $\gamma$
(where the number $\gamma$ may appear arbitrarily many times but
an eigenvalue only up to its multiplicity) such that
\begin{equation}\label{ellp}
  \sum_{k=1}^\infty |\lambda_k-\mu_k|^p < \infty
  \qquad \text{for all}\,\,\,\,\, p>\frac{n-1}{3},\,\,\,p\geq 1.
\end{equation}
\end{proposition}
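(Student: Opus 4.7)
The plan is to pass to resolvents, thereby reducing the assertion to an abstract $\ell^p$-perturbation statement for bounded self-adjoint operators with difference in $\frS_p$, and then to transfer the resulting matching back to $A_\Theta$ and $\AN$ via the inverse M\"obius map. By Theorem~\ref{thm1}, for every $\lambda_0\in\rho(A_\Theta)\cap\rho(\AN)$ and every $p>\tfrac{n-1}{3}$ with $p\ge 1$, the resolvent difference
\[
  K \defeq (A_\Theta-\lambda_0)^{-1}-(\AN-\lambda_0)^{-1}
\]
lies in $\frS_p(L^2(\Omega))$; in particular $\sess(A_\Theta)=\sess(\AN)$, and since accumulation points of the isolated eigenvalues of a self-adjoint operator belong to its essential spectrum, the limit $\gamma$ lies in this common set.

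I would first select a \emph{real} $\lambda_0\in\rho(A_\Theta)\cap\rho(\AN)$, which is possible since $\AN$ is bounded from below and the isolated eigenvalues of $A_\Theta$ form a countable set in $\dR\setminus\sess(\AN)$. Setting $R_\Theta\defeq(A_\Theta-\lambda_0)^{-1}$ and $R_{\rm N}\defeq(\AN-\lambda_0)^{-1}$ produces bounded self-adjoint operators in $L^2(\Omega)$ whose difference $K$ is in $\frS_p(L^2(\Omega))$ and whose essential spectra coincide. The M\"obius map $\lambda\mapsto(\lambda-\lambda_0)^{-1}$ is a multiplicity-preserving bijection between isolated eigenvalues of $A_\Theta$, $\AN$ and those of $R_\Theta$, $R_{\rm N}$; in particular, $a_k\defeq(\lambda_k-\lambda_0)^{-1}$ are isolated eigenvalues of $R_\Theta$ converging to the point $c\defeq(\gamma-\lambda_0)^{-1}\in\sess(R_\Theta)=\sess(R_{\rm N})$, and $c\ne 0$ because $\lambda_0\ne\gamma$.

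The core technical step, which I expect to be the main obstacle, is the following abstract matching statement: if $B_1$, $B_2$ are bounded self-adjoint operators with $B_1-B_2\in\frS_p$ and $(a_k)$ is a sequence of isolated eigenvalues of $B_1$ (with multiplicity) converging to a point $c\in\sess(B_1)=\sess(B_2)$, then there exist $b_k\in\sigma_{\rm disc}(B_2)\cup\{c\}$, where each eigenvalue of $B_2$ is used at most to the extent of its multiplicity and $c$ may be repeated arbitrarily, such that
\begin{equation}\label{pair_ell_p}
  \sum_{k=1}^\infty |a_k-b_k|^p<\infty.
\end{equation}
This is a version of the classical $\ell^p$-stability of eigenvalues under $\frS_p$-perturbations and is proved by splitting $(a_k)$ into the subsequences approaching $c$ from above and from below, then comparing spectral counting functions of $B_1$ and $B_2$ on one-sided shrinking neighbourhoods of $c$ via the min-max principle and the Ky~Fan inequalities; compare \cite[III.\S2, III.\S7]{GK69} and the related use of such matching in \cite{BLLLP10}. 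Every other step of the argument is either a routine manipulation or a direct appeal to Theorem~\ref{thm1}.

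Given \eqref{pair_ell_p}, I would finally set $\mu_k\defeq\lambda_0+b_k^{-1}$ when $b_k\in\sigma_{\rm disc}(R_{\rm N})$, so that $\mu_k$ is an isolated eigenvalue of $\AN$ of matching multiplicity, and $\mu_k\defeq\gamma$ when $b_k=c$. A direct computation in either case gives
\[
  |\lambda_k-\mu_k|
  = \left|\frac{1}{a_k}-\frac{1}{b_k}\right|
  = \frac{|a_k-b_k|}{|a_kb_k|}
  \le C\,|a_k-b_k|
\]
for all sufficiently large $k$, with a constant $C$ depending only on $c$, since $a_k,b_k\to c\ne 0$ forces $|a_kb_k|\to c^2>0$. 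Summability \eqref{ellp} then follows from \eqref{pair_ell_p}, up to the finitely many initial indices, which contribute only a finite amount to any $\ell^p$-sum.
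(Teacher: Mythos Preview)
Your proposal is correct and follows essentially the same route as the paper: pass to a real resolvent point, compare the bounded self-adjoint resolvents whose difference lies in $\frS_p$ by Theorem~\ref{thm1}, match their discrete spectra in $\ell^p$, and pull the matching back through the M\"obius map $\lambda\mapsto(\lambda-\lambda_0)^{-1}$. The only difference is that the paper does not reprove your ``core technical step'' but simply invokes \cite[Theorem~II]{K87}, which gives extended enumerations $(\alpha_k)$, $(\beta_k)$ of the isolated eigenvalues of two bounded self-adjoint operators with $\frS_p$-difference satisfying $\sum_k|\alpha_k-\beta_k|^p\le\|B_1-B_2\|_{\frS_p}^p$; your statement \eqref{pair_ell_p} is an immediate consequence of this, so you can replace the proposed min--max/Ky~Fan argument by a direct citation.
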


\begin{proof}
The spectrum of $\AN$ is bounded below, which follows from \eqref{halfgreen1}
and the
ellipticity of $\cL$.  Hence also the essential spectrum of $A_\Theta$ is
bounded below,
and we can choose a number $\lambda\in\RR\cap\rho(\AN)\cap\rho(A_\Theta)$.
Because of Theorem~\ref{thm1} we can apply \cite[Theorem~II]{K87} to the
operators $(\AN-\lambda)^{-1}$ and $(A_\Theta-\lambda)^{-1}$, which yields
that there exist extended enumerations $(\alpha_k)$ and $(\beta_k)$ of the
isolated eigenvalues of $(\AN-\lambda)^{-1}$ and $(A_\Theta-\lambda)^{-1}$,
respectively, such that
\begin{equation}\label{estkato}
  \sum_{k=1}^\infty |\beta_k-\alpha_k|^p
  \le \bigl\|(A_\Theta-\lambda)^{-1}
  - (\AN-\lambda)^{-1}\bigr\|_{\frS_p(L^2(\Omega))}^p
\end{equation}
for $p>(n-1)/3$, $p\geq 1$;
by ``extended enumeration'' a sequence is meant that contains all isolated
eigenvalues of an operator according to their multiplicities plus endpoints
of the essential spectrum taken arbitrarily many times.
There exist indices $j_k$ such that
\[
  \frac{1}{\lambda_k-\lambda} = \beta_{j_k}.
\]
The corresponding values $\alpha_{j_k}$ can be written as
\[
  \alpha_{j_k} = \frac{1}{\mu_k-\lambda}
\]
where the $\mu_k$ are either isolated eigenvalues of $\AN$ or endpoints of the
essential
spectrum.  Now the estimate \eqref{estkato} implies that
\[
  \sum_{k=1}^\infty
\left|\frac{1}{\lambda_k-\lambda}-\frac{1}{\mu_k-\lambda}\right|^p < \infty.
\]
Since $\lambda_k\to\gamma$, we must have $\mu_k\to\gamma$.  Writing the
difference
of fractions as a single fraction and observing that the denominators converge
to
$\gamma-\lambda\ne0$, we can deduce the validity of \eqref{ellp}.
\end{proof}

If $n=2$ or $n=3$, then a trace formula is valid, which is stated in the next
corollary and follows directly from Corollary~\ref{co.trace_formula}.

\begin{corollary}\label{co.trace_ell}
Let the assumptions be
as in Theorem~\ref{thm1} and assume, in addition, that $n=2$ or $n=3$. Then the resolvent
difference in \eqref{resdiff1} is a trace
class operator and
\[
  \tr\bigl((A_\Theta-\lambda)^{-1}-(\AN-\lambda)^{-1}\bigr)
  = \tr\bigl(\,\ov{M'(\lambda)}\bigl(\Theta-\ov{M(\lambda)}\bigr)^{-1}\bigr)
\]
holds for all $\lambda\in\rho(A_\Theta)\cap\rho(\AN)$.
\end{corollary}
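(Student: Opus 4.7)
My plan is to recognise Corollary~\ref{co.trace_ell} as a direct specialization of the abstract Corollary~\ref{co.trace_formula} to the present elliptic setting, so the work reduces to verifying the three hypotheses of that corollary in the case $n\in\{2,3\}$.

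First, hypothesis (iii) of Corollary~\ref{co.trace_formula} — namely $0\notin\sess(\Theta)$ together with self-adjointness of $A_\Theta$ — is assumed in Theorem~\ref{thm1} and hence is built into the hypotheses here. Second, hypothesis (i), that $\ov{M(\lambda_0)}\in\sS_\infty(L^2(\partial\Omega))$ for some $\lambda_0\in\CC\backslash\RR$, follows from Proposition~\ref{gammamh2}(iv), which gives the stronger membership $\ov{M(\lambda)}\in\sS_{\frac{1}{n-1},\infty}(L^2(\partial\Omega))$ for every $\lambda\in\rho(\AN)$, so in particular $\ov{M(\lambda)}$ is compact.

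The only place where the restriction $n\in\{2,3\}$ enters is the verification of hypothesis (ii), namely $\gamma(\lambda_1)^*\in\sS_2(L^2(\Omega),L^2(\partial\Omega))$ for some $\lambda_1\in\rho(\AN)$. By Proposition~\ref{gammamh2}(i), $\gamma(\lambda)^*\in\sS_{\frac{3}{2(n-1)},\infty}(L^2(\Omega),L^2(\partial\Omega))$ for every $\lambda\in\rho(\AN)$, and Lemma~\ref{splemma}(ii) with $r=\frac{3}{2(n-1)}$ then gives $\gamma(\lambda)^*\in\sS_p$ for every $p>\frac{2(n-1)}{3}$. For $n=2$ this lower bound is $\tfrac{2}{3}$ and for $n=3$ it is $\tfrac{4}{3}$; in both cases $p=2$ is admissible, so hypothesis (ii) holds.

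With all three hypotheses verified, Corollary~\ref{co.trace_formula} yields simultaneously that the resolvent difference in \eqref{resdiff1} is trace class and that the stated trace identity holds for every $\lambda\in\rho(A_\Theta)\cap\rho(\AN)$. (The trace class property can alternatively be read off directly from Theorem~\ref{thm1}, since $\sS_{\frac{3}{n-1},\infty}\subset\sS_1$ for $n\le 3$ by Lemma~\ref{splemma}(ii), but this is not needed beyond consistency with Corollary~\ref{co.trace_formula}.) There is no genuine obstacle here: the argument is a purely arithmetic translation of the $s$-number asymptotics established in Proposition~\ref{gammamh2} into the Schatten–von~Neumann exponents required by the abstract trace formula, so all the work lies in the machinery already assembled in Sections~\ref{3.1}–\ref{3.3} and in Proposition~\ref{gammamh2}.
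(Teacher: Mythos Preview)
Your proposal is correct and follows essentially the same approach as the paper, which simply remarks that the corollary ``follows directly from Corollary~\ref{co.trace_formula}.'' You have merely spelled out the verification of hypotheses (i)--(iii) of that abstract result using Proposition~\ref{gammamh2} and Lemma~\ref{splemma}(ii), which is exactly what is implicit in the paper's one-line justification.
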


We note also that in the case $n=2$ or $n=3$ in the above corollary the wave operators of the pair $\{\AN, A_\Theta\}$ exist
(see, e.g.\ \cite[Theorem~X.4.12]{kato})
and that, in particular, the absolutely continuous parts of $\AN$ and $A_\Theta$ are unitarily equivalent
and the absolutely continuous spectra
of $\AN$ and $A_\Theta$ coincide.

The statement in the next theorem is a well known result from \cite{B62}.

\begin{theorem}\label{thm2}
Let $\AN$ and $\AD$ be the Neumann and Dirichlet operator associated with $\cL$.
Then for all $\lambda\in\rho(\AD)\cap\rho(\AN)$ the singular values $s_k$ of the
resolvent difference
\begin{equation}\label{resdiff2}
  (\AD-\lambda)^{-1} - (\AN-\lambda)^{-1}
\end{equation}
satisfy $s_k = O\bigl(k^{-\frac{2}{n-1}}\bigr)$, $k\to\infty$, and the
expression in \eqref{resdiff2}
is in $\sS_p(L^2(\Omega))$ for all $p>\frac{n-1}{2}$.
\end{theorem}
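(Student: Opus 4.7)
The plan is to apply the abstract Theorem~\ref{th.resolv_diff3} directly to the quasi boundary triple $\{L^2(\partial\Omega),\Gamma_0,\Gamma_1\}$ constructed in Theorem~\ref{qbtthm}, observing that in this setting $A_0=\ker\Gamma_0=\AN$ and $A_1=\ker\Gamma_1=\AD$, so the resolvent difference in \eqref{resdiff2} is exactly the object $(A_1-\lambda)^{-1}-(A_0-\lambda)^{-1}$ treated in that theorem.

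First I would verify the four hypotheses of Theorem~\ref{th.resolv_diff3}. Hypothesis (i), self-adjointness of $A_1=\AD$, is already established in Theorem~\ref{qbtthm}. Hypothesis (ii), boundedness of $M(\lambda_0)$ for some $\lambda_0\in\rho(\AN)$, follows from Lemma~\ref{le.interpolation}, which states that $M(\lambda)$ admits a bounded extension to $L^2(\partial\Omega)$. For hypothesis (iii), I choose $\frA=\sS_{\frac{3}{2(n-1)},\infty}$; then $\gamma(\lambda)^*\in\frA^*(L^2(\Omega),L^2(\partial\Omega))$ for all $\lambda\in\rho(\AN)$ by Proposition~\ref{gammamh2}(i) (recalling $\sS_{r,\infty}^*=\sS_{r,\infty}$).

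The key remaining step is hypothesis (iv): choosing an intermediate space $\wt\cG_0$ with $\cG_0=H^{1/2}(\partial\Omega)\subset\wt\cG_0\subset L^2(\partial\Omega)$ whose embedding into $\cG=L^2(\partial\Omega)$ sits in a suitable ideal class $\frB$. The natural choice is $\wt\cG_0=H^{1/2}(\partial\Omega)$ itself. Applying Lemma~\ref{le.s_emb} with $\Sigma=\partial\Omega$, $\cK=H^{1/2}(\partial\Omega)$, $r_1=0$ and $r_2=\tfrac12$ (using that the identity map of $H^{1/2}(\partial\Omega)$ is trivially bounded into itself with range in the larger-index space), I obtain that $\iota_{\wt\cG_0\to\cG}\in\sS_{\frac{1}{2(n-1)},\infty}\bigl(H^{1/2}(\partial\Omega),L^2(\partial\Omega)\bigr)$. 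Hence I may take $\frB=\sS_{\frac{1}{2(n-1)},\infty}$.

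With all four hypotheses verified, Theorem~\ref{th.resolv_diff3} yields that the resolvent difference \eqref{resdiff2} lies in $(\frA\cdot\frB)(L^2(\Omega))$. By Lemma~\ref{splemma}(iii) this product is exactly
\[
\sS_{\frac{3}{2(n-1)},\infty}\cdot\sS_{\frac{1}{2(n-1)},\infty}=\sS_{\frac{2}{n-1},\infty},
\]
which gives the singular value decay $s_k=O(k^{-2/(n-1)})$. The Schatten class membership for $p>(n-1)/2$ then follows from the general inclusion $\sS_{r,\infty}\subset\sS_q$ for $q>r^{-1}$ of Lemma~\ref{splemma}(ii). I do not anticipate any serious obstacle: the full strength of the Birman estimate is packaged into the abstract Theorem~\ref{th.resolv_diff3}, and the only nontrivial task is matching Sobolev indices to Schatten--von~Neumann exponents, which is routine via Lemmas~\ref{le.interpolation} and \ref{le.s_emb}.
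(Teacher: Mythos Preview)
Your proof is correct and follows essentially the same route as the paper: apply Theorem~\ref{th.resolv_diff3} with $\wt\cG_0=\cG_0=H^{1/2}(\partial\Omega)$, $\frA=\sS_{\frac{3}{2(n-1)},\infty}$ (from Proposition~\ref{gammamh2}(i)) and $\frB=\sS_{\frac{1}{2(n-1)},\infty}$ (from Lemma~\ref{le.s_emb} with $r_1=0$, $r_2=\tfrac12$), then use Lemma~\ref{splemma}(iii),(ii) to combine the ideals. The paper's own proof is the same argument, only omitting the explicit verification of hypotheses (i) and (ii) of Theorem~\ref{th.resolv_diff3} that you spell out.
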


\begin{proof}
We apply Theorem~\ref{th.resolv_diff3} with
$\cG_0=\widetilde\cG_0=H^{1/2}(\partial\Omega)$.
If we set $\cK=H^\frac{1}{2}(\partial\Omega)$, $r_1=0$ and
$r_2=\tfrac{1}{2}$ in Lemma~\ref{le.s_emb}, then it follows that
the embedding operator from $H^{1/2}(\partial\Omega)$ into $L^2(\partial\Omega)$
belongs to
\begin{equation}\label{abcabc}
\sS_{\frac{1}{2(n-1)},\infty}\bigl(H^{1/2}(\partial\Omega),
L^2(\partial\Omega)\bigr).
\end{equation}
Hence Theorem~\ref{th.resolv_diff3} implies that \eqref{resdiff2} is in
\begin{equation*}
  \sS_{\frac{3}{2(n-1)},\infty}\cdot
  \sS_{\frac{1}{2(n-1)},\infty} =
  \sS_{\frac{2}{n-1},\infty},
\end{equation*}
that is, the singular values of \eqref{resdiff2} satisfy $s_k =
O(k^{-\frac{2}{n-1}})$. Lemma~\ref{splemma}(ii)
immediately gives the second statement.
\end{proof}

By taking differences of resolvent differences, the statements in the next
corollary follow directly from Theorems~\ref{thm1} and \ref{thm2}.

\begin{corollary}
\label{cor1}
Let $\Theta_1$ and $\Theta_2$ be self-adjoint relations in $L^2(\partial\Omega)$
such that $0\notin\sess(\Theta_i)$ and the realizations $A_{\Theta_i}$, $i=1,2$,
of $\cL$
are self-adjoint operators. Then
\[
  (A_{\Theta_1}-\lambda)^{-1} - (A_{\Theta_2}-\lambda)^{-1}
  \in \sS_{\frac{3}{n-1},\infty}(L^2(\Omega)),
\]
for all $\lambda\in\rho(A_{\Theta_1})\cap\rho(A_{\Theta_2})$ and
\[
  (A_{\Theta_1}-\lambda)^{-1} - (\AD-\lambda)^{-1}
  \in \sS_{\frac{2}{n-1},\infty}(L^2(\Omega)),
\]
for all $\lambda\in\rho(A_{\Theta_1})\cap\rho(\AD)$.
\end{corollary}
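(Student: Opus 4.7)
The plan is to express each of the two resolvent differences as a telescoping combination through the Neumann operator $\AN$ and then invoke Theorems~\ref{thm1} and~\ref{thm2}, together with the elementary monotonicity $\sS_{s,\infty}\subset\sS_{r,\infty}$ whenever $s\ge r$.

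First, to prove the first claim, I would fix $\lambda$ in the triple intersection $\rho(A_{\Theta_1})\cap\rho(A_{\Theta_2})\cap\rho(\AN)$, which is non-empty because all three operators are self-adjoint and hence have $\CC\backslash\RR\subset\rho$. For such $\lambda$, write the algebraic identity
\[
  (A_{\Theta_1}-\lambda)^{-1}-(A_{\Theta_2}-\lambda)^{-1}
  = \bigl[(A_{\Theta_1}-\lambda)^{-1}-(\AN-\lambda)^{-1}\bigr]
    - \bigl[(A_{\Theta_2}-\lambda)^{-1}-(\AN-\lambda)^{-1}\bigr].
\]
Theorem~\ref{thm1} places both bracketed terms in $\sS_{\frac{3}{n-1},\infty}(L^2(\Omega))$, and since this class is (in particular) a vector space, their difference belongs to the same ideal. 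For the second claim, with $\lambda\in\rho(A_{\Theta_1})\cap\rho(\AD)\cap\rho(\AN)$, I would analogously decompose
\[
  (A_{\Theta_1}-\lambda)^{-1}-(\AD-\lambda)^{-1}
  = \bigl[(A_{\Theta_1}-\lambda)^{-1}-(\AN-\lambda)^{-1}\bigr]
    + \bigl[(\AN-\lambda)^{-1}-(\AD-\lambda)^{-1}\bigr].
\]
By Theorem~\ref{thm1} the first bracket is in $\sS_{\frac{3}{n-1},\infty}$, and by Theorem~\ref{thm2} the second is in $\sS_{\frac{2}{n-1},\infty}$. Since $\frac{3}{n-1}\ge\frac{2}{n-1}$, the first bracket also lies in the weaker ideal $\sS_{\frac{2}{n-1},\infty}$, and hence so does the whole sum.

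The last step is to remove the auxiliary constraint $\lambda\in\rho(\AN)$. This is handled uniformly by Lemma~\ref{resdifflemma}: once membership in the operator ideal $\sS_{r,\infty}(L^2(\Omega))$ of the relevant resolvent difference is established at a single point of $\rho(A_{\Theta_1})\cap\rho(A_{\Theta_2})$ (respectively $\rho(A_{\Theta_1})\cap\rho(\AD)$), it automatically extends to every point of that intersection. There is no genuine obstacle in this argument; it is essentially pure bookkeeping, with the only mild care being the verification that the triple resolvent intersection used as the starting point is non-empty, which follows from self-adjointness.
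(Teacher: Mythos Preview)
Your proposal is correct and follows exactly the approach the paper indicates: the paper simply states that ``by taking differences of resolvent differences, the statements in the next corollary follow directly from Theorems~\ref{thm1} and \ref{thm2}.'' You have supplied precisely those details, including the use of Lemma~\ref{resdifflemma} to pass from the triple intersection (where the telescoping is valid) to the full double intersection in the statement.
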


If the difference $\Theta_1-\Theta_2$ is itself in some ideal $\frS_{\infty,r}$,
we get an improvement of the first assertion in the previous corollary.

\begin{theorem}
\label{thm3}
Let $\Theta_1$ and $\Theta_2$ be bounded self-adjoint operators in
$L^2(\partial\Omega)$ such that
$0\notin\sess(\Theta_i)$ and the realizations $A_{\Theta_i}$, $i=1,2$, of $\cL$
from \eqref{cl}
are self-adjoint operators.
Moreover, assume that $s_k(\Theta_1-\Theta_2) = O(k^{-r})$, $k\to\infty$, for
some $r>0$.
Then for all $\lambda\in\rho(A_{\Theta_1})\cap\rho(A_{\Theta_2})$, the singular
values $s_k$ of the
resolvent difference
\begin{equation}\label{resdiff3}
  (A_{\Theta_1}-\lambda)^{-1}-(A_{\Theta_2}-\lambda)^{-1}
\end{equation}
satisfy $s_k = O\bigl(k^{-\frac{3}{n-1}-r}\bigr)$, $k\to\infty$, and the
expression in \eqref{resdiff3}
is in $\sS_p(L^2(\Omega))$ for all $p>(\frac{3}{n-1}+r)^{-1}$.
In particular, if $\Theta_1-\Theta_2\in\frS_q(L^2(\partial\Omega))$ for some
$q>0$, then
the expression in \eqref{resdiff3} is in $\frS_p(L^2(\Omega))$ for all
\[
  p > \frac{q(n-1)}{3q+n-1}\,.
\]
\end{theorem}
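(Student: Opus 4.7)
My plan is to obtain Theorem~\ref{thm3} as a direct application of the abstract Theorem~\ref{th.resolv_diff4} to the quasi boundary triple $\{L^2(\partial\Omega),\Gamma_0,\Gamma_1\}$ of Theorem~\ref{qbtthm}, combined with the operator-ideal information for the $\gamma$-field and the Weyl function collected in Proposition~\ref{gammamh2}. No genuinely new analytic work should be needed: the result is essentially a bookkeeping exercise in the calculus of the ideals $\sS_{r,\infty}$ and $\sS_p$.

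First I would identify the classes of operator ideals. Set $\frA \defeq \sS_{\frac{3}{2(n-1)},\infty}$; since $s_k(A^*)=s_k(A)$, we have $\frA^*=\frA$. Translate the hypothesis $s_k(\Theta_1-\Theta_2)=O(k^{-r})$ as $\Theta_1-\Theta_2\in\sS_{r,\infty}(L^2(\partial\Omega))$, and set $\frB\defeq\sS_{r,\infty}$. The assumptions of Theorem~\ref{th.resolv_diff4} are then verified as follows: (i) $\overline{M(\lambda_0)}\in\sS_\infty(L^2(\partial\Omega))$ for every $\lambda_0\in\rho(\AN)$ by Proposition~\ref{gammamh2}(iv); (ii) is precisely the hypothesis on $\Theta_1,\Theta_2$; (iii) is $\Theta_1-\Theta_2\in\frB(L^2(\partial\Omega))$; and the additional assumption $\gamma(\lambda_1)^*\in\frA^*(L^2(\Omega),L^2(\partial\Omega))$ needed for the second conclusion of Theorem~\ref{th.resolv_diff4} holds by Proposition~\ref{gammamh2}(i).

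Theorem~\ref{th.resolv_diff4} then yields that the resolvent difference in \eqref{resdiff3} belongs to $(\frA\cdot\frB\cdot\frA^*)(L^2(\Omega))$ for every $\lambda\in\rho(A_{\Theta_1})\cap\rho(A_{\Theta_2})$. Two applications of Lemma~\ref{splemma}(iii) (or a single application after merging the two $\frA$-factors) compute this ideal to be $\sS_{\frac{3}{2(n-1)}+r+\frac{3}{2(n-1)},\infty}(L^2(\Omega))=\sS_{\frac{3}{n-1}+r,\infty}(L^2(\Omega))$, which gives exactly the singular value bound $s_k=O(k^{-\frac{3}{n-1}-r})$. The Schatten inclusion then follows from Lemma~\ref{splemma}(ii). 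For the final ``in particular'' assertion, the assumption $\Theta_1-\Theta_2\in\sS_q$ implies $\Theta_1-\Theta_2\in\sS_{1/q,\infty}$ by Lemma~\ref{splemma}(i), so applying the statement already proved with $r=1/q$ yields membership in $\sS_p(L^2(\Omega))$ for every $p>\bigl(\tfrac{3}{n-1}+\tfrac{1}{q}\bigr)^{-1}=\frac{q(n-1)}{3q+n-1}$, as claimed. The only potentially confusing point in the bookkeeping is ensuring that the hypothesis in Theorem~\ref{th.resolv_diff4} is formulated with $\frA^*$ rather than $\frA$; this is painlessly handled by the symmetry $\frA^*=\frA$ of the ideal $\sS_{\frac{3}{2(n-1)},\infty}$.
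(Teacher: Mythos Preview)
Your proposal is correct and follows essentially the same route as the paper's own proof: apply Theorem~\ref{th.resolv_diff4} with $\frA=\sS_{\frac{3}{2(n-1)},\infty}$ and $\frB=\sS_{r,\infty}$, using Proposition~\ref{gammamh2} to verify the hypotheses, then compute $\frA\cdot\frB\cdot\frA^*=\sS_{\frac{3}{n-1}+r,\infty}$ via Lemma~\ref{splemma}(iii) and deduce the remaining claims from Lemma~\ref{splemma}(ii) and (i). Your write-up is simply a more explicit version of the paper's terse argument.
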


\begin{proof}
For $ \Theta_1-\Theta_2\in\sS_{r,\infty}$ we conclude from
Theorem~\ref{th.resolv_diff4} and Proposition~\ref{gammamh2}
that the difference of the resolvents in \eqref{resdiff3} is in
\begin{equation*}
  \sS_{\frac{3}{2(n-1)},\infty}\cdot\sS_{r,\infty}
  \cdot \sS_{\frac{3}{2(n-1)},\infty}=\sS_{\frac{3}{n-1}+r,\infty}.
\end{equation*}
The other assertions follow from Lemma~\ref{splemma}(ii) and (i) with
$r=\tfrac{1}{q}$\,.
\end{proof}

We leave it to the reader to formulate generalizations of Theorem~\ref{thm1} and Theorem~\ref{thm2} for
maximal dissipative and maximal accumulative realizations of $\cL$ by using the abstract results
in Section~\ref{3.4}.

\subsection{Elliptic operators with $\delta$ and $\delta'$-interactions on smooth hypersurfaces}\label{4.3}
In this section we investigate second order elliptic operators with
$\delta$ and $\delta^\prime$-interactions. Spectral problems for Schr\"{o}dinger operators with
$\delta$ and $\delta^\prime$-point interactions, as well as $\delta$-interactions on curves and
surfaces have been studied in, e.g.\ \cite{AGHH88,AGS87,BEKS94,E08,EF09,EI01,EK03,S88}.
In order to define self-adjoint elliptic operators in $L^2(\dR^n)$
with $\delta$ and $\delta^\prime$-interactions on a smooth compact hypersurface $\Sigma$ in $\dR^n$
we first construct suitable quasi boundary triples in Proposition~\ref{qbtthm2}.
We mention that in contrast to the approach via quadratic forms there appear no additional
technical difficulties when treating $\delta^\prime$-interactions; cf. \cite{E08}.
One of the main results in this section is Theorem~\ref{thm4}, where we obtain spectral estimates
for the resolvent differences of the operators with $\delta$ or $\delta^\prime$-interactions on the hypersurface
$\Sigma$ and the unperturbed self-adjoint realization in $L^2(\dR^n)$.

Let in the following $\Omega_{\rm i}\subset\dR^n$ be a bounded domain with
compact $C^\infty$-boundary and let $\Omega_{\rm e} \defeq \dR^n\backslash\overline{\Omega_{\rm i}}$,
so that $\partial\Omega_{\rm i} = \partial\Omega_{\rm e} \eqdef \Sigma$ and $\dR^n=\Omega_{\rm i}\,\dot\cup\,
\Sigma\,\dot\cup\,\Omega_{\rm e}$, and assume that both $\Omega_{\rm i}$ and $\Omega_{\rm e}$
are connected. In the following, $\Omega_{\rm i}$ is called interior
domain and $\Omega_{\rm e}$ exterior domain. A function $f$ defined on $\dR^n$ will often be
decomposed in the form $f_{\rm i}\oplus f_{\rm e}$, where $f_{\rm i}$ and $f_{\rm e}$ denote the
restrictions of $f$ to the interior and exterior domain, respectively.
Let $\cL$ be a formally symmetric,
uniformly elliptic differential expression as in~\eqref{cl} on the Euclidean space $\dR^n$.
The (usual) self-adjoint realization of $\cL$ in $L^2(\dR^n)$ is the operator $A_{\rm free}$ given by
\begin{equation}
\label{eq:Afree}
  A_{\rm free}f = \cL f,\quad \dom  A_{\rm free}  =  \bigl\{f \in H^1(\dR^n)\colon
\cL f\in L^2(\dR^n)\bigr\}.
\end{equation}
Observe that $A_{\rm free}$ is the unique self-adjoint operator associated with the quadratic form
corresponding to $\cL$ on $H^1(\dR^n)$; cf. \cite{kato} and \eqref{formA}.
The restrictions of $\cL$ to the interior domain $\Omega_{\rm i}$ and
exterior domain $\Omega_{\rm e}$ are denoted by
$\cL_{\rm i}$ and $\cL_{\rm e}$, respectively. Clearly, $\cL_{\rm i}$ and $\cL_{\rm e}$
are formally symmetric, uniformly elliptic differential expressions as
considered in Sections~\ref{4.1} and \ref{4.2}.
Like in Definition~\ref{domt} we introduce the operators $T_{\rm i}$ and $T_{\rm e}$ by
\begin{alignat*}{2}
  T_{\rm i}f_{\rm i} &= \cL_{\rm i}f_{\rm i}, &\quad
    \dom T_{\rm i} &= H^2(\Omega_{\rm i}),\\[1ex]
  T_{\rm e}f_{\rm e} &= \cL_{\rm e}f_{\rm e}, &\quad
    \dom T_{\rm e} &= \bigl\{f_{\rm e}\in H^1(\Omega_{\rm e})\colon
    \cL_{\rm e} f_{\rm e}\in L^2(\Omega_{\rm e}),\,
    f_{\rm e}|_{\Omega'}\in H^2(\Omega')\bigr\},
\end{alignat*}
where
$\Omega'\subset\Omega_{\rm e}$ is a bounded subdomain of
$\Omega_{\rm e}$
with smooth boundary such that $\Sigma=\partial\Omega_{\rm e} \subset \partial\Omega'$.
The Dirichlet and Neumann operators on the interior and exterior domain are defined as in Section~\ref{4.1}
by
\begin{alignat*}{2}
  A_{\rm D,\rm i} f_{\rm i}&=\cL_{\rm i}f_{\rm i},&\qquad
    \dom A_{\rm D,\rm i} &= \bigl\{f_{\rm i}\in\dom T_{\rm i}\colon f_{\rm i}\vert_\Sigma=0\bigr\},\\[1ex]
  A_{\rm D,\rm e} f_{\rm e}&=\cL_{\rm e}f_{\rm e},&\qquad
    \dom A_{\rm D,\rm e} &= \bigl\{f_{\rm e}\in\dom T_{\rm e}\colon f_{\rm e}\vert_\Sigma=0\bigr\},
\end{alignat*}
and
\begin{alignat*}{2}
  A_{\rm N,\rm i} f_{\rm i}&=\cL_{\rm i}f_{\rm i},&\qquad \dom A_{\rm N,\rm i}
  &= \left\{f_{\rm i}\in\dom T_{\rm i}\colon
    \frac{\partial f_{\rm i}}{\partial \nu_{\cL_{\rm i}}}\Bigl|_\Sigma=0\right\},\\[1ex]
  A_{\rm N,\rm e} f_{\rm e}&=\cL_{\rm e}f_{\rm e},&\qquad \dom A_{\rm N,\rm e}
  &= \left\{f_{\rm e}\in\dom T_{\rm e}\colon
    \frac{\partial f_{\rm e}}{\partial \nu_{\cL_{\rm e}}}\Bigl|_\Sigma=0\right\},
\end{alignat*}
respectively. Since $A_{\rm D,\rm i}$, $A_{\rm D,\rm e}$, $A_{\rm N,\rm i}$ and
$A_{\rm N,\rm e}$ are self-adjoint operators, it is clear that the orthogonal sums
\begin{equation}\label{eq:ADAN2}
  A_{\rm D,\rm i}\oplus A_{\rm D,\rm e}\qquad\text{and}\qquad  A_{\rm N,\rm i}\oplus A_{\rm N,\rm e}
\end{equation}
are self-adjoint operators in $L^2(\dR^n)=L^2(\Omega_{\rm i})\oplus L^2(\Omega_{\rm e})$,
and they both are restrictions of the operator $T_{\rm i}\oplus T_{\rm e}$. Note that the functions
in the domain of the operators in \eqref{eq:ADAN2} do not belong to $H^2$ in a neighbourhood of $\Sigma$
but only in one-sided neighbourhoods of $\Sigma$.
In order to treat $\delta$ and $\delta^\prime$-interactions with quasi boundary triple techniques,
we introduce the closed densely defined symmetric operators
\begin{equation}
\label{eq:wtAwhA}
  \wt A := A_{\rm free}\cap \bigl(A^{\rm i}_{\rm D} \oplus A^{\rm e}_{\rm D}\bigr)
  \quad\text{and}\quad
  \wh A := A_{\rm free}\cap \bigl(A^{\rm i}_{\rm N} \oplus A^{\rm e}_{\rm N}\bigr)
\end{equation}
in $L^2(\dR^n)$, as well as the restrictions
\begin{equation}
\label{eq:wtTwhT}
\begin{split}
  &\wt T f = \cL f, \quad \dom \wt T
  = \bigl\{f_{\rm i} \oplus f_{\rm e}\in \dom(T_{\rm i}\oplus T_{\rm e})\colon
  f_{\rm i}\vert_\Sigma = f_{\rm e}\vert_\Sigma  \bigr\},\\[1ex]
  &\wh T f = \cL f, \quad \dom \wh T
  = \left\{f_{\rm i} \oplus f_{\rm e}\in \dom (T_{\rm i}\oplus T_{\rm e})\colon
  \frac{\partial f_{\rm i}}{\partial\nu_{\cL_{\rm i}}}\Bigl|_{\Sigma}
  = -\frac{\partial f_{\rm e}}{\partial\nu_{\cL_{\rm e}}}\Bigl|_{\Sigma} \right\},
\end{split}
\end{equation}
of the operator $T_{\rm i}\oplus T_{\rm e}$ in $L^2(\dR^n)$.
In the next proposition it is shown how quasi boundary triples can be defined in this situation.

\begin{proposition}\label{qbtthm2}
Let $\wt A$  and $\wh A$ be the closed densely defined symmetric operators in
\eqref{eq:wtAwhA} and let $\wt T$ and $\wh T$  be as in \eqref{eq:wtTwhT}. Then the
following statements are true.
\begin{itemize}
\item[(i)] The triple $\{L^2(\Sigma), \wt \Gamma_0,\wt\Gamma_1\}$, where
\begin{displaymath}
  \wt \Gamma_0 \hat f = \frac{\partial f_{\rm i}}{\partial
  \nu_{\cL_{\rm i}}}\Bigl|_{\Sigma}+\frac{\partial f_{\rm e} }{\partial
  \nu_{\cL_{\rm e}}}\Bigl|_{\Sigma}\quad\text{and}\quad\wt\Gamma_1 \hat f
  = f\vert_\Sigma,\quad
  \hat f=\begin{pmatrix} f\\ \wt T f\end{pmatrix},\,\,\,f\in\dom\wt T,
\end{displaymath}
is a quasi boundary triple for $\wt A^*$ such that
\begin{equation*}
  \ker\wt\Gamma_0 = A_{\rm free}\quad\text{and}\quad \ker
  \wt \Gamma_1 = A_{\rm D, i} \oplus A_{\rm D,  e}.
\end{equation*}
\item[(ii)] The triple $\{L^2(\Sigma), \wh \Gamma_0,\wh\Gamma_1\}$, where
\begin{displaymath}
  \wh \Gamma_0 \hat f = \frac{\partial f_{\rm
  e}}{\partial \nu_{\cL_{\rm e}}}\Bigl|_{\Sigma},\quad \wh\Gamma_1 \hat f = f_{\rm e}\vert_\Sigma -
  f_{\rm i}\vert_\Sigma\quad\text{and}\quad \hat f=\begin{pmatrix} f\\ \wh T f\end{pmatrix},\,\,\,f\in\dom\wh T,
\end{displaymath}
is a quasi boundary triple for $\wh A^*$ such that
\begin{equation*}
  \ker\wh\Gamma_0 = A_{\rm N, i}\oplus
  A_{\rm N, e}\quad\text{and}\quad \ker \wh \Gamma_1 = A_{\rm free}.
\end{equation*}
\end{itemize}
\end{proposition}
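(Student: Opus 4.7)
The natural strategy is to invoke Theorem~\ref{suff_cond_qbt} in each of the two cases, reducing the verification to three items: (a) $\ker\Gamma_0$ contains a self-adjoint relation, (b) $\ran\Gamma$ is dense in $L^2(\Sigma)\times L^2(\Sigma)$, and (c) the abstract Green identity~\eqref{green1} holds on $\dom T$. Green's identity will be obtained by combining the one-sided identity~\eqref{halfgreen1} from Theorem~\ref{qbtthm} applied separately to the interior and exterior domains, so the matching conditions built into $\dom\wt T$ and $\dom\wh T$ will allow the boundary contributions on $\Sigma$ to be rearranged into the desired bilinear expressions in $\wt\Gamma$ or $\wh\Gamma$.

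For part (i), I first observe that $A_{\rm free}\subset\wt T$: if $f\in\dom A_{\rm free}$, then $f\in H^1(\RR^n)$ forces $f_{\rm i}|_\Sigma=f_{\rm e}|_\Sigma$, and interior elliptic regularity together with the smoothness of $\Sigma$ gives $f_{\rm i}\in H^2(\Omega_{\rm i})$ and $f_{\rm e}|_{\Omega'}\in H^2(\Omega')$. The inclusion $A_{\rm free}\subset\ker\wt\Gamma_0$ follows from the fact that, for $f\in H^1(\RR^n)$ with componentwise $L^2$-images under $\cL_{\rm i}$ and $\cL_{\rm e}$, the distributional action of $\cL$ on $\RR^n$ equals $\cL_{\rm i}f_{\rm i}\oplus\cL_{\rm e}f_{\rm e}$ plus the singular term $\bigl(\partial_{\nu_{\cL_{\rm i}}}f_{\rm i}|_\Sigma+\partial_{\nu_{\cL_{\rm e}}}f_{\rm e}|_\Sigma\bigr)\delta_\Sigma$, so $\cL f\in L^2(\RR^n)$ is equivalent to $\wt\Gamma_0\hat f=0$. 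This simultaneously gives the reverse inclusion, showing $\ker\wt\Gamma_0=A_{\rm free}$, which is self-adjoint, so (a) holds. For (b), the surjectivity of the trace map $H^2(\Omega_{\rm i})\to H^{3/2}(\Sigma)\times H^{1/2}(\Sigma)$ (and the analogous exterior statement) allows, for any prescribed pair $(\psi,\varphi)\in H^{1/2}(\Sigma)\times H^{3/2}(\Sigma)$, the construction of $f_{\rm i},f_{\rm e}$ with $f_{\rm i}|_\Sigma=f_{\rm e}|_\Sigma=\varphi$ and $\partial_{\nu_{\cL_{\rm i}}}f_{\rm i}|_\Sigma+\partial_{\nu_{\cL_{\rm e}}}f_{\rm e}|_\Sigma=\psi$; density follows since $H^{1/2}(\Sigma)\times H^{3/2}(\Sigma)$ is dense in $L^2(\Sigma)\times L^2(\Sigma)$. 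For (c), applying \eqref{halfgreen1} on each side and using $g_{\rm i}|_\Sigma=g_{\rm e}|_\Sigma$ yields $(\wt Tf,g)=\fra[f,g]-(\wt\Gamma_0\hat f,\wt\Gamma_1\hat g)$, and the antisymmetric combination is Green's identity. The identification $\ker\wt\Gamma_1=A_{\rm D,i}\oplus A_{\rm D,e}$ is immediate since $f|_\Sigma=0$ splits into $f_{\rm i}|_\Sigma=f_{\rm e}|_\Sigma=0$.

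Part (ii) proceeds analogously with the roles of Dirichlet and Neumann-type interface conditions interchanged. The definition of $\wh T$ enforces $\partial_{\nu_{\cL_{\rm i}}}f_{\rm i}|_\Sigma=-\partial_{\nu_{\cL_{\rm e}}}f_{\rm e}|_\Sigma$, so $\wh\Gamma_0\hat f=0$ forces both normal derivatives to vanish and hence $\ker\wh\Gamma_0=A_{\rm N,i}\oplus A_{\rm N,e}$, which is self-adjoint. Conversely, $\ker\wh\Gamma_1$ adds the matching $f_{\rm i}|_\Sigma=f_{\rm e}|_\Sigma$ to the already-imposed normal condition, which as above characterizes $A_{\rm free}$. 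Green's identity is obtained by applying \eqref{halfgreen1} on each side, substituting $\partial_{\nu_{\cL_{\rm i}}}f_{\rm i}|_\Sigma=-\wh\Gamma_0\hat f$ and $\partial_{\nu_{\cL_{\rm e}}}f_{\rm e}|_\Sigma=\wh\Gamma_0\hat f$, and collecting the traces of $g$ into $\wh\Gamma_1\hat g=g_{\rm e}|_\Sigma-g_{\rm i}|_\Sigma$. Density of $\ran\wh\Gamma$ follows from the surjectivity of the one-sided Cauchy data onto $H^{3/2}(\Sigma)\times H^{1/2}(\Sigma)$, which lets one realize arbitrary pairs in $H^{1/2}(\Sigma)\times H^{3/2}(\Sigma)$ while respecting the constraint in $\dom\wh T$.

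The main technical obstacle, for both parts, is the precise identification of $A_{\rm free}$ with the intersection of $\wh T$ (or $\wt T$) and the appropriate kernel. This rests on the classical but slightly delicate equivalence: for $f$ with $L^2$-images on each side, the jump of Cauchy data across $\Sigma$ measures exactly the singular part of $\cL f$, so global $H^1$-regularity together with $\cL f\in L^2(\RR^n)$ is equivalent to simultaneous matching of $f|_\Sigma$ and the sum (with sign conventions coming from oppositely oriented outward normals) of conormal derivatives. Once this equivalence is in place, all the verifications of Theorem~\ref{suff_cond_qbt} become mechanical and the conclusions of the proposition follow.
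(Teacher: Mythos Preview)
Your proof is correct and follows the same overall architecture as the paper: verify conditions (a)--(c) of Theorem~\ref{suff_cond_qbt}, obtaining Green's identity by summing the one-sided identity~\eqref{halfgreen1} over $\Omega_{\rm i}$ and $\Omega_{\rm e}$ and exploiting the matching condition built into $\dom\wt T$ or $\dom\wh T$. The computation you sketch for (c) is exactly the calculation the paper carries out explicitly for case~(ii).

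The one genuine difference is how you establish the identifications $\ker\wt\Gamma_0=A_{\rm free}$ and $\ker\wh\Gamma_1=A_{\rm free}$. You argue directly via the distributional jump formula: the singular part of $\cL f$ on $\RR^n$ is the conormal-derivative jump times $\delta_\Sigma$, so $\cL f\in L^2(\RR^n)$ forces the jump to vanish. This is correct and self-contained, but it requires invoking (or reproving) that jump formula with enough care about the Sobolev regularity of the traces involved. The paper instead uses a softer maximality trick for case~(ii): it first observes $A_{\rm free}\subset\ker\wh\Gamma_1$ from $\dom A_{\rm free}\subset H^2_{\rm loc}(\RR^n)$ (so traces from both sides automatically agree), and then notes that Green's identity---already established---forces $\ker\wh\Gamma_1$ to be symmetric; since $A_{\rm free}$ is self-adjoint, the inclusion must be an equality. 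This avoids the distributional analysis entirely at the cost of requiring Green's identity to be proved first. Your approach has the minor advantage of yielding the identification of $\ker\wt\Gamma_0$ (resp.\ $\ker\wh\Gamma_1$) independently of (c), so the order of verification is more flexible; the paper's approach is cleaner once one notices that the symmetry of $\ker\Gamma_j$ is a free consequence of the abstract Green identity.
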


\begin{proof}
We verify only assertion (ii). Item (i) can be shown in the same way and can alternatively be deduced
from \cite[Theorem 4.1, Lemma 4.2 and Proposition~4.2]{AB08}. In order to prove to (ii)
we make use of Theorem~\ref{suff_cond_qbt}. Note first that condition (a) in Theorem~\ref{suff_cond_qbt}
holds since $\ker\wh \Gamma_0 = A_{\rm N, i}\oplus A_{\rm N, e}$ is
self-adjoint; see also Theorem~\ref{qbtthm}.
It follows from \eqref{tracemap} that $\ran (\wh\Gamma_0, \wh \Gamma_1)^\top =  H^{1/2}(\Sigma)\times H^{3/2}(\Sigma)$, which is
dense in $L^2(\Sigma)\times L^2(\Sigma)$, and hence condition (b) in Theorem~\ref{suff_cond_qbt}
is also satisfied. In order to check condition (c), denote by
$(\cdot,\cdot)$, $(\cdot,\cdot)_{\rm i}$, $(\cdot,\cdot)_{\rm e}$ and $(\cdot,\cdot)_{\Sigma}$ the
inner products in $L^2(\dR^n)$, $L^2(\Omega_{\rm i})$, $L^2(\Omega_{\rm e})$ and
$L^2(\Sigma)$, respectively. For $f = f_{\rm i} \oplus
f_{\rm e}$ and $g = g_{\rm i} \oplus g_{\rm e}$ in $\dom \wh T$ we compute, with the help of
Green's identity,
\begin{equation}\label{greencompute}
\begin{split}
  (\wh T f, g) -  (f,\wh T g)
  &=(T_{\rm i}f_{\rm i}, g_{\rm i})_{\rm i} -
    (f_{\rm i},T_{\rm i} g_{\rm i})_{\rm i} + (T_{\rm e}f_{\rm e},
    g_{\rm e})_{\rm e} -  (f_{\rm e},T_{\rm e} g_{\rm e})_{\rm e} \\
  &=\left(f_{\rm i}\vert_\Sigma,\frac{\partial g_{\rm i}}{\partial\nu_{\cL_{\rm i}}}\Bigl|_\Sigma\right)_\Sigma
    - \left(\frac{\partial f_{\rm i}}{\partial \nu_{\cL_{\rm i}}}\Bigl|_\Sigma,
    g_{\rm i}\vert_\Sigma\right)_\Sigma \\
  &\qquad +\left(f_{\rm e}\vert_\Sigma,
    \frac{\partial g_{\rm e}}{\partial\nu_{\cL_{\rm e}}}\Bigl|_\Sigma\right)_\Sigma
    - \left(\frac{\partial f_{\rm e}}{\partial \nu_{\cL_{\rm e}}}\Bigl|_\Sigma,
    g_{\rm e}\vert_\Sigma\right)_{\Sigma}   \\
  &=\left(f_{\rm e}\vert_\Sigma - f_{\rm i}\vert_\Sigma, \frac{\partial g_{\rm e}}
    {\partial \nu_{\cL_{\rm e}}}\Bigl|_\Sigma\right)_{\Sigma}-\left(\frac{\partial f_{\rm e}}{\partial \nu_{\cL_{\rm e}}}\Bigl|_\Sigma,
    g_{\rm e}\vert_\Sigma - g_{\rm i}\vert_\Sigma\right)_\Sigma \\
  &=\bigl(\wh \Gamma_1 \hat f,\wh \Gamma_0 \hat g\bigr)_{\Sigma} - \bigl(\wh \Gamma_0 \hat f,\wh
    \Gamma_1 \hat g\bigr)_{\Sigma},
\end{split}
\end{equation}
where $\hat f=(f,\wh T f)^\top$ and $\hat g=(g,\wh T g)^\top$;
cf.\ the proof of Theorem~\ref{qbtthm}. Hence also condition (c) in Theorem~\ref{suff_cond_qbt}
holds. Therefore $\ker\wh\Gamma_0\cap\ker\wh\Gamma_1$ is a closed symmetric
operator in $L^2(\dR^n)$ and $\{L^2(\Sigma), \wh \Gamma_0,\wh\Gamma_1\}$ is a quasi boundary triple for
its adjoint. Since $\dom A_{\rm free}\subset H^2_{\rm loc}(\dR^n)$ and  $H^2_{\rm loc}(\dR^n)\subset\ker\wh\Gamma_1$
by the definition of the trace using approximations by $C^\infty$-functions it follows that
$A_{\rm free}\subset\ker\wh\Gamma_1$.
On the other hand, \eqref{greencompute} implies that
$\ker\wh\Gamma_1$ is a symmetric operator and therefore $A_{\rm free}=
\ker\wh\Gamma_1$. Together with $\ker\wh \Gamma_0 = A_{\rm N, i}\oplus A_{\rm N, e}$ this yields $\wh A=\ker\wh\Gamma_0\cap\ker\wh\Gamma_1$, and hence $\{L^2(\Sigma), \wh \Gamma_0,\wh\Gamma_1\}$ is a quasi boundary triple for $\wh A^*$.
\end{proof}

With the help of the quasi boundary triples from the previous proposition and the operators $\wt A$, $\wt T$,
$\wh A$ and $\wh T$, we define self-adjoint differential operators $A_{\delta,\alpha}$ and $A_{\delta^\prime,\beta}$
associated with $\cL$ and $\delta$ and $\delta^\prime$-interactions with strengths $\alpha$ and $\beta$ on $\Sigma$, respectively.
We remark that it is difficult
to treat $\delta^\prime$-interactions making use of quadratic forms, whereas the operator $A_{\delta,\alpha}$ with a $\delta$-interaction
could equivalently be defined with the help of the quadratic form; see, e.g.\ \cite{BEKS94} or \cite{E08},
where an additional minus sign appears in the boundary condition.
The statement in the next theorem is essentially a consequence of Theorem~\ref{th.A_Th_self_adj}.
We remark that in the quasi boundary triple framework also functions $\alpha,\beta$ with less
smoothness could be allowed.

\begin{theorem}\label{thm.sa2}
Let $\alpha,\beta\in C^1(\Sigma)$ be real-valued and assume that $\beta\ne0$ on $\Sigma$.
Then
\begin{displaymath}
  A_{\delta,\alpha} := \cL\upharpoonright \bigl\{\hat f\in \wt T\colon \alpha\wt\Gamma_1 \hat f = \wt\Gamma_0\hat f\bigr\}\,\,\,\,\text{and}\,\,\,\,
  A_{\delta^\prime,\beta} := \cL \upharpoonright
  \bigl\{\hat f\in\wh T\colon \wh\Gamma_1 \hat f = \beta\wh\Gamma_0\hat f \bigr\}
\end{displaymath}
are self-adjoint operators in $L^2(\dR^n)$.
\end{theorem}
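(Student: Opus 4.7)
The plan is to verify the hypotheses of Theorem~\ref{th.A_Th_self_adj} separately for each of the two quasi boundary triples from Proposition~\ref{qbtthm2}. Three ingredients will be common to both applications: (a) Proposition~\ref{qbtthm2} already supplies the required self-adjointness of $\ker\wt\Gamma_1 = A_{\rm D,i}\oplus A_{\rm D,e}$ and of $\ker\wh\Gamma_1 = A_{\rm free}$; (b) I would record that $\ran\wt\Gamma_0 = \ran\wh\Gamma_0 = H^{1/2}(\Sigma)$, while $\ran\wt\Gamma_1,\ran\wh\Gamma_1\subset H^{3/2}(\Sigma)$, in both cases via the trace/conormal maps in \eqref{tracemap}; and (c) I would repeat the duality-and-interpolation argument of Lemma~\ref{le.interpolation} for the Weyl functions $\wt M$ and $\wh M$ of the two triples to conclude that their closures $\ov{\wt M(\lambda)},\ov{\wh M(\lambda)}$ belong to $\sS_\infty(L^2(\Sigma))$ and have range contained in $H^1(\Sigma)$ for every $\lambda$ in the respective resolvent set.

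For $A_{\delta',\beta}$ the boundary condition $\wh\Gamma_1\hat f = \beta\wh\Gamma_0\hat f$ translates into $\binom{\wh\Gamma_0\hat f}{\wh\Gamma_1\hat f}\in\Theta$ with $\Theta = \{(x,\beta x)\colon x\in L^2(\Sigma)\}$, the graph of the bounded self-adjoint multiplication operator $M_\beta$. Since $\beta\in C^1(\Sigma)$ is nonvanishing on the compact set $\Sigma$, both $M_\beta$ and $M_\beta^{-1} = M_{1/\beta}$ are bounded, whence $0\in\rho(M_\beta)$ and in particular $0\notin\sess(\Theta)$. The range condition
\[
  \Theta^{-1}\bigl(\ran\ov{\wh M(\lambda_\pm)}\bigr) \subset \ran\wh\Gamma_0 = H^{1/2}(\Sigma)
\]
then reduces to $M_{1/\beta}(\ran\ov{\wh M(\lambda_\pm)})\subset H^{1/2}(\Sigma)$, which holds because $\ran\ov{\wh M(\lambda_\pm)}\subset H^1(\Sigma)$ and multiplication by $1/\beta\in C^1(\Sigma)$ maps $H^1(\Sigma)$ into itself. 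Theorem~\ref{th.A_Th_self_adj} then yields $A_{\delta',\beta} = A_\Theta = A_\Theta^*$.

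The $\delta$-case will be more delicate. The boundary condition $\alpha\wt\Gamma_1\hat f = \wt\Gamma_0\hat f$ corresponds to the self-adjoint linear relation
\[
  \Theta = \bigl\{(\alpha y, y)\colon y\in L^2(\Sigma)\bigr\} = M_\alpha^{-1},
\]
where the inverse is understood in the sense of relations. Since $\alpha$ may vanish on the set $Z\defeq\{x\in\Sigma\colon \alpha(x)=0\}$, $\Theta$ will in general be multi-valued: the orthogonal decomposition $L^2(\Sigma) = L^2(\Sigma\setminus Z)\oplus L^2(Z)$ gives $\mul\Theta = L^2(Z)$ and exhibits the operator part $\Theta_{\rm op}$ in $L^2(\Sigma\setminus Z)$ as the (possibly unbounded) inverse of $M_\alpha\restr{L^2(\Sigma\setminus Z)}$, which is a bounded injective operator with dense range (approximate $v\in L^2(\Sigma\setminus Z)$ by $\alpha\cdot(v\chi_{|\alpha|>1/n}/\alpha)$). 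Consequently $\Theta_{\rm op}$ is bijective onto $L^2(\Sigma\setminus Z)$ with the bounded inverse $M_\alpha\restr{L^2(\Sigma\setminus Z)}$, so $0\in\rho(\Theta_{\rm op})$ and hence $0\notin\sess(\Theta)$, even when $\alpha$ takes values arbitrarily close to zero and $\Theta_{\rm op}$ is unbounded. Unwinding $\Theta^{-1}(X) = M_\alpha(X)$ from the notation introduced before Theorem~\ref{th.A_Th_self_adj}, the remaining range condition becomes $M_\alpha(\ran\ov{\wt M(\lambda_\pm)})\subset H^{1/2}(\Sigma)$, which follows from $\ran\ov{\wt M(\lambda_\pm)}\subset H^1(\Sigma)$ and the $C^1$-multiplicative invariance of $H^1(\Sigma)$. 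Theorem~\ref{th.A_Th_self_adj} then gives $A_{\delta,\alpha} = A_\Theta = A_\Theta^*$.

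The hard part will be the relation-theoretic bookkeeping in the $\delta$-case: correctly reading off $\Theta = M_\alpha^{-1}$ when $\alpha$ has zeros, identifying its operator and multi-valued parts, and noticing that $0\in\rho(\Theta_{\rm op})$ holds automatically because $\Theta_{\rm op}^{-1}$ is the bounded operator $M_\alpha$, even though $\Theta_{\rm op}$ itself may be unbounded. Once this subtlety is resolved, all remaining Sobolev-mapping and compactness ingredients are routine adaptations of material from Section~\ref{4.1}.
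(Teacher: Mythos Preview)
Your proposal is correct and follows the same route as the paper: apply Theorem~\ref{th.A_Th_self_adj} to each quasi boundary triple from Proposition~\ref{qbtthm2}, using the argument of Lemma~\ref{le.interpolation} to obtain compactness and the $H^1(\Sigma)$-range of $\ov{\wt M(\lambda)},\ov{\wh M(\lambda)}$, together with the fact that multiplication by $\alpha$ or $1/\beta\in C^1(\Sigma)$ maps $H^1(\Sigma)$ into $H^{1/2}(\Sigma)$. Your relation-theoretic analysis of the $\delta$-case when $\alpha$ has zeros is more detailed than the paper (which merely declares this case ``analogous''), and can in fact be shortened: since $(\Theta-0)^{-1}=M_\alpha\in\cB(L^2(\Sigma))$, one has $0\in\rho(\Theta)$ directly, which already gives $0\notin\sess(\Theta)$ without decomposing $\Theta$ into operator and multi-valued parts.
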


Before proving the theorem we note that the interface condition $\alpha\wt\Gamma_1 \hat f = \wt\Gamma_0\hat f$,
$\hat f=\bigl(\begin{smallmatrix} f \\ \wt Tf\end{smallmatrix}\bigr)$,
has the explicit form
\begin{equation*}
  \alpha f\vert_\Sigma = \frac{\partial f_{\rm i}}{\partial\nu_{\cL_{\rm i}}}\Bigl|_\Sigma +
  \frac{\partial f_{\rm e}}{\partial\nu_{\cL_{\rm e}}}\Bigl|_\Sigma,\quad f_{\rm i}\vert_\Sigma=f_{\rm e}\vert_\Sigma,\quad f=f_{\rm i}\oplus f_{\rm e}\in\dom T_{\rm i}\oplus T_{\rm e},
\end{equation*}
and hence one can interpret the operator $A_{\delta,\alpha}$ as an elliptic operator
with $\delta$-interaction of strength~$\alpha$. The interface condition $\wh\Gamma_1 \hat f = \beta \wh\Gamma_0\hat f$, $\hat f=\bigl(\begin{smallmatrix} f \\ \wh Tf\end{smallmatrix}\bigr)$, has the explicit form
\begin{equation*}
  f_{\rm e}\vert_\Sigma-f_{\rm i}\vert_\Sigma=
  \beta \frac{\partial f_{\rm e}}{\partial\nu_{\cL_{\rm e}}}\Bigl|_\Sigma,\quad
  \frac{\partial f_{\rm i}}{\partial\nu_{\cL_{\rm i}}}\Bigl|_\Sigma
  =-\frac{\partial f_{\rm e}}{\partial\nu_{\cL_{\rm e}}}\Bigl|_\Sigma,\quad f_{\rm i}\oplus f_{\rm e}\in\dom T_{\rm i}\oplus T_{\rm e},
\end{equation*}
and therefore the operator $A_{\delta^\prime,\beta}$ can be interpreted as an elliptic operator
with $\delta^\prime$-interaction of strength~$\beta$.

\begin{proof}[of Theorem~\ref{thm.sa2}]
Only the self-adjointness of $A_{\rm \delta^\prime,\beta}$ will be shown. The
self-adjointness of $A_{\rm \delta,\alpha}$ can be checked analogously.
For the quasi boundary triple $\{L^2(\Sigma),\wh\Gamma_0,\wh\Gamma_1\}$ in Proposition~\ref{qbtthm2}(ii)
we have
\begin{equation*}
  \ran \wh \Gamma_0 = H^{1/2}(\Sigma)\quad\text{and}\quad\ran \wh \Gamma_1 = H^{3/2}(\Sigma),
\end{equation*}
so that for $\lambda\in\rho(A_{\rm N, i} \oplus A_{\rm N,e})\cap\rho(A_{\rm free})$ the
corresponding Weyl function $\wh M(\lambda)$ maps
$H^{1/2}(\Sigma)$ onto $H^{3/2}(\Sigma)$.
By the same argument as in Lemma~\ref{le.interpolation} the closure of $\wh M(\lambda)$ maps
$L^2(\Sigma)$ into $H^1(\Sigma)$, and it follows that this is a compact operator in $L^2(\Sigma)$.
In order to conclude from Theorem~\ref{th.A_Th_self_adj} with $\Theta=\beta$
that the operator $A_{\delta^\prime,\beta}$ is self-adjoint,
note that the assumptions $\beta\in C^1(\Sigma)$ and $\beta\ne0$ on $\Sigma$ imply
that the self-adjoint multiplication operator $\beta$ in $L^2(\Sigma)$ is boundedly invertible
and that $\beta^{-1}h\in H^{1/2}(\Sigma)$ for all $h\in H^1(\Sigma)$.
Hence $A_{\delta^\prime,\beta}$ is self-adjoint in  $L^2(\dR^n)$ by Theorem~\ref{th.A_Th_self_adj}.
\end{proof}

Let $\wt\gamma$ be the $\gamma$-field associated with the quasi boundary triple
$\{L^2(\Sigma),\wt\Gamma_0,\wt\Gamma_1\}$ in Proposition~\ref{qbtthm2}(i) and
let $\wh\gamma$ be the $\gamma$-field associated with the quasi boundary triple
$\{L^2(\Sigma),\wh\Gamma_0,\wh\Gamma_1\}$ in Proposition~\ref{qbtthm2}(ii).
The same reasoning as in the proof of Proposition~\ref{gammamh2}(i) yields
\begin{equation}
\label{eq:gammacoupling}
\begin{split}
  &\wt \gamma(\lambda)^*
  \in\sS_{\frac{3}{2(n-1)},\infty}\bigl(L^2(\dR^n),L^2(\Sigma)\bigr), \quad
  \lambda\in\rho(A_{\rm free}),\\
  &\wh \gamma(\lambda)^*
  \in\sS_{\frac{3}{2(n-1)},\infty}\bigl(L^2(\dR^n),L^2(\Sigma)\bigr), \quad
  \lambda\in\rho(A_{\rm N, i}\oplus A_{\rm N, e}).
\end{split}
\end{equation}

In the following preparatory lemma we show spectral estimates for the
resolvent differences of
$A_{\rm free}$, $A_{\rm D,i}\oplus A_{\rm D,e}$ and $A_{\rm N,i}\oplus A_{\rm N,e}$.

\begin{lemma}
\label{lem:FreeDirNeu}
Let $A_{\rm free}$, $A_{\rm D,i}\oplus A_{\rm D,e}$ and $A_{\rm N,i}\oplus A_{\rm N,e}$
be the self-adjoint operators associated with $\cL$ in $L^2(\dR^n)$ defined
in~\eqref{eq:Afree} and \eqref{eq:ADAN2}, respectively.
The singular values $s_k$ of  the resolvent differences
\begin{equation}\label{resdiff5}
\begin{split}
  &(A_{\rm free} - \lambda)^{-1} - (A_{\rm D, i} \oplus A_{\rm D, e}
  -\lambda)^{-1},\quad  \lambda \in \rho(A_{\rm free})\cap \rho(A_{\rm D, i}
  \oplus A_{\rm D, e}), \\
  &(A_{\rm free} - \lambda)^{-1} - (A_{\rm N, i} \oplus A_{\rm N, e}
  -\lambda)^{-1},\quad  \lambda\in \rho(A_{\rm free})\cap\rho(A_{\rm N, i}
  \oplus A_{\rm N, e}),
\end{split}
\end{equation}
satisfy $s_k = O\bigl(k^{-\frac{2}{n-1}}\bigr), ~k\rightarrow\infty$,
and the expressions in \eqref{resdiff5} are in $\sS_p(L^2(\dR^n))$ for
all $p > \frac{n-1}{2}$.
\end{lemma}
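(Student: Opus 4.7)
The plan is to prove each of the two estimates in \eqref{resdiff5} separately, using the two quasi boundary triples from Proposition~\ref{qbtthm2} and applying Theorem~\ref{th.resolv_diff3} in the same way as in the proof of Theorem~\ref{thm2}.

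For the first difference, I would use the triple $\{L^2(\Sigma),\wt\Gamma_0,\wt\Gamma_1\}$, whose kernels realize exactly $\ker\wt\Gamma_0=A_{\rm free}$ and $\ker\wt\Gamma_1=A_{\rm D,\rm i}\oplus A_{\rm D,\rm e}$. The associated Weyl function $\wt M$ is bounded on $\lambda_0\in\rho(A_{\rm free})$ (by the same Dirichlet-to-Neumann type interpolation argument used in Lemma~\ref{le.interpolation}), and $\ran\wt\Gamma_0=H^{1/2}(\Sigma)$. Hence setting $\cG_0=\wt{\cG}_0=H^{1/2}(\Sigma)$ in Theorem~\ref{th.resolv_diff3}, combined with \eqref{eq:gammacoupling} giving $\wt\gamma(\lambda)^*\in\sS_{\frac{3}{2(n-1)},\infty}$ and Lemma~\ref{le.s_emb} (with $r_1=0$, $r_2=\tfrac12$) giving the embedding $\iota_{H^{1/2}(\Sigma)\to L^2(\Sigma)}\in\sS_{\frac{1}{2(n-1)},\infty}$, the product rule Lemma~\ref{splemma}(iii) yields membership in
\[
\sS_{\frac{3}{2(n-1)},\infty}\cdot \sS_{\frac{1}{2(n-1)},\infty}=\sS_{\frac{2}{n-1},\infty}.
\]

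For the second difference, I would run an entirely parallel argument with the triple $\{L^2(\Sigma),\wh\Gamma_0,\wh\Gamma_1\}$, for which $\ker\wh\Gamma_0=A_{\rm N,\rm i}\oplus A_{\rm N,\rm e}$ and $\ker\wh\Gamma_1=A_{\rm free}$. Again $\ran\wh\Gamma_0=H^{1/2}(\Sigma)$, $\wh M(\lambda)$ is bounded for $\lambda\in\rho(A_{\rm N,\rm i}\oplus A_{\rm N,\rm e})$, and $\wh\gamma(\lambda)^*\in\sS_{\frac{3}{2(n-1)},\infty}$ by \eqref{eq:gammacoupling}. Theorem~\ref{th.resolv_diff3} applied with the same choice of ideals then gives the analogous $\sS_{\frac{2}{n-1},\infty}$-estimate for the second resolvent difference.

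Finally, the statement that the singular values satisfy $s_k=O(k^{-2/(n-1)})$ is just the definition of $\sS_{\frac{2}{n-1},\infty}$, and the Schatten-class conclusion $\sS_p(L^2(\RR^n))$ for $p>\frac{n-1}{2}$ is immediate from Lemma~\ref{splemma}(ii). The only mildly delicate point is verifying that $\wt M(\lambda)$ and $\wh M(\lambda)$ map $H^{1/2}(\Sigma)$ boundedly into itself (so that the hypothesis (ii) of Theorem~\ref{th.resolv_diff3} applies); this follows from the mapping properties in \eqref{tracemap} combined with standard interpolation, exactly as in the proof of Lemma~\ref{le.interpolation}, so I do not expect any real obstacle beyond quoting the right results.
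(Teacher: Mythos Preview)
Your argument for the first resolvent difference is exactly the paper's: Theorem~\ref{th.resolv_diff3} applied to the triple $\{L^2(\Sigma),\wt\Gamma_0,\wt\Gamma_1\}$ with $\wt\cG_0=H^{1/2}(\Sigma)$, together with \eqref{eq:gammacoupling} and Lemma~\ref{le.s_emb}.

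For the second resolvent difference your route differs from the paper's. You apply Theorem~\ref{th.resolv_diff3} a second time, now to the triple $\{L^2(\Sigma),\wh\Gamma_0,\wh\Gamma_1\}$, and this is perfectly valid: $A_1=\ker\wh\Gamma_1=A_{\rm free}$ is self-adjoint, $\ran\wh\Gamma_0=H^{1/2}(\Sigma)$, $\wh\gamma(\lambda)^*\in\sS_{\frac{3}{2(n-1)},\infty}$ by \eqref{eq:gammacoupling}, and the boundedness of $\wh M(\lambda)$ follows (as you say, and as the paper itself notes in the proof of Theorem~\ref{thm.sa2}) by the argument of Lemma~\ref{le.interpolation}. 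The paper instead derives the second estimate indirectly: it invokes Theorem~\ref{thm2} on $\Omega_{\rm i}$ and $\Omega_{\rm e}$ separately to get $(A_{\rm D,i}\oplus A_{\rm D,e}-\lambda)^{-1}-(A_{\rm N,i}\oplus A_{\rm N,e}-\lambda)^{-1}\in\sS_{\frac{2}{n-1},\infty}$, and then combines this with the already-established first difference. Your approach is more symmetric and self-contained; the paper's has the minor advantage of not re-checking the hypotheses of Theorem~\ref{th.resolv_diff3} for the second triple, since it reduces everything to results already proved. Both are correct.
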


\begin{proof}
In order to show the statement for the first resolvent difference in \eqref{resdiff5}
we apply Theorem~\ref{th.resolv_diff3} with $\cG_0=\widetilde\cG_0=H^{1/2}(\Sigma)$ and the quasi
boundary triple $\{L^2(\Sigma),\wt\Gamma_0,\wt\Gamma_1\}$
in the same form as in the proof of Theorem~\ref{thm2}. Lemma~\ref{le.s_emb} implies that
the embedding operator from $H^{1/2}(\Sigma)$ into $L^2(\Sigma)$
belongs to
\begin{equation*}
  \sS_{\frac{1}{2(n-1)},\infty}\bigl(H^{1/2}(\Sigma),L^2(\Sigma)\bigr);
\end{equation*}
cf.\ \eqref{abcabc}. According to Proposition~\ref{qbtthm2}(i) we have
$A_{\rm free}=\ker\wt\Gamma_0$ and $A_{\rm D, i}\oplus A_{\rm D, e}=\ker\wt\Gamma_1$.
Moreover, for $\lambda\in\rho(A_{\rm free})\cap \rho(A_{\rm D, i}
\oplus A_{\rm D, e})$ the corresponding Weyl function $\widetilde M(\lambda)$ maps
$H^{1/2}(\Sigma)$ onto $H^{3/2}(\Sigma)$, and is bounded when regarded as an operator in $L^2(\Sigma)$;
cf.\ Lemma~\ref{le.interpolation}.
Then Theorem~\ref{th.resolv_diff3} implies that the resolvent difference
\begin{equation}\label{resdiff9}
  (A_{\rm free} - \lambda)^{-1} - (A_{\rm D, i} \oplus A_{\rm D, e}-\lambda)^{-1}
\end{equation}
belongs to
\begin{displaymath}
  \sS_{\frac{3}{2(n-1)},\infty}\cdot
  \sS_{\frac{1}{2(n-1)},\infty}
  = \sS_{\frac{2}{n-1},\infty};
\end{displaymath}
cf.\ the proof of Theorem~\ref{thm2}. Hence the singular values $s_k$
of \eqref{resdiff9} satisfy $s_k=O(k^{-\frac{2}{n-1}})$
and by Lemma~\ref{splemma}(ii) the difference \eqref{resdiff9}
is in $\sS_p(L^2(\dR^n))$ for all $p > \frac{n-1}{2}$.

It remains to show the statements for the second resolvent difference in~\eqref{resdiff5}.
For this we note that by Theorem~\ref{thm2} the singular values $s_k$ of the resolvent
differences
\begin{alignat*}{2}
  & (A_{\rm D,i}-\lambda)^{-1} - (A_{\rm N,i}-\lambda)^{-1}, \qquad
  && \lambda\in\rho(A_{\rm D,i})\cap\rho(A_{\rm N,i}), \\[1ex]
  &(A_{\rm D,e}-\lambda)^{-1} - (A_{\rm N,e}-\lambda)^{-1},  \qquad
  && \lambda\in\rho(A_{\rm D,e})\cap\rho(A_{\rm N,e}),
\end{alignat*}
satisfy $s_k = O(k^{-\frac{2}{n-1}})$, $k\rightarrow\infty$. This implies
that the singular values $s_k$ of the orthogonal sum
\begin{equation*}
  (A_{\rm D,i}\oplus A_{\rm D,e}-\lambda)^{-1} - (A_{\rm N,i}\oplus A_{\rm N,e}-\lambda)^{-1},
\end{equation*}
also satisfy $s_k = O(k^{-\frac{2}{n-1}})$, $k\rightarrow\infty$,
for $\lambda\in\rho(A_{\rm D,i}\oplus A_{\rm D,e})\cap\rho(A_{\rm N,i}\oplus A_{\rm N,e})$.
Together with the properties of \eqref{resdiff9} we conclude that the singular values $s_k$ of the
second resolvent difference in~\eqref{resdiff5}
satisfy $s_k = O(k^{-\frac{2}{n-1}})$, $k\rightarrow\infty$, and
the statement on the Schatten--von Neumann class follows again from Lemma~\ref{splemma}(ii).
\end{proof}

The next theorem is the main result in this subsection. We compare the resolvent of the
unperturbed operator $A_{\rm free}$ with
the resolvents of the self-adjoint operators $A_{\delta,\alpha}$ and $A_{\delta^\prime,\beta}$
modelling $\delta$ and $\delta^\prime$-interactions on $\Sigma$.

% -------------------------------------------------------------------
\begin{theorem}\label{thm4}
Let $\alpha,\beta\in C^1(\Sigma)$ be real-valued and assume that $\beta\ne0$ on $\Sigma$.
Further, let $A_{\rm free}$ be the self-adjoint elliptic operator
associated with $\cL$ in~\eqref{eq:Afree} and let $A_{\delta,\alpha}$
and $A_{\delta^\prime,\beta}$ be the self-adjoint operators from Theorem~\ref{thm.sa2}.
Then the following statements are true.
\begin{itemize}

\item[(i)] For all $\lambda\in \rho(A_{\rm \delta,\alpha})\cap  \rho(A_{\rm free})$
the singular values $s_k$ of the resolvent difference
\begin{equation}
\label{resdiff6}
  (A_{\delta,\alpha} - \lambda)^{-1} - (A_{\rm free} -\lambda)^{-1}
\end{equation}
satisfy $s_k = O(k^{-\frac{3}{n-1}})$, $k\rightarrow\infty$, and the
expression in~\eqref{resdiff6} is in $\sS_p(L^2(\dR^n))$ for all
$p > \frac{n-1}{3}$.

\item[(ii)] For all $\lambda\in \rho(A_{\delta^\prime,\beta})\cap  \rho(A_{\rm free})$
the singular values $s_k$ of the resolvent difference
\begin{equation}
\label{resdiff7}
  (A_{\delta^\prime,\beta} - \lambda)^{-1} - (A_{\rm free} -\lambda)^{-1}
\end{equation}
satisfy $s_k = O(k^{-\frac{2}{n-1}})$, $k\rightarrow\infty$, and the
expression in~\eqref{resdiff7} is in $\sS_p(L^2(\dR^n))$ for all
$p > \frac{n-1}{2}$.
\end{itemize}
\end{theorem}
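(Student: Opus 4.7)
The plan is to apply Krein's resolvent formula in the form of Corollary~\ref{cor.krein}(i) to each case, using the quasi boundary triples of Proposition~\ref{qbtthm2}, and then to read off singular-value bounds from \eqref{eq:gammacoupling} and Lemma~\ref{splemma}. For (i), I would use the triple $\{L^2(\Sigma),\wt\Gamma_0,\wt\Gamma_1\}$, whose reference operator is exactly $A_0=A_{\rm free}$, and identify $A_{\delta,\alpha}$ with $A_\Theta$ for the self-adjoint relation $\Theta=\{(\alpha y,y):y\in L^2(\Sigma)\}$. For non-real $\lambda$ one should have
\[
  (A_{\delta,\alpha}-\lambda)^{-1}-(A_{\rm free}-\lambda)^{-1}
  = \ov{\wt\gamma(\lambda)}\bigl(\Theta-\ov{\wt M(\lambda)}\bigr)^{-1}\wt\gamma(\ov\lambda)^{*},
\]
provided the middle factor lies in $\cB(L^2(\Sigma))$. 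Once this is established, the estimates $\ov{\wt\gamma(\lambda)},\wt\gamma(\ov\lambda)^{*}\in\sS_{3/(2(n-1)),\infty}$ from \eqref{eq:gammacoupling} together with Lemma~\ref{splemma}(iii) place the resolvent difference in $\sS_{3/(n-1),\infty}(L^2(\RR^n))$ for non-real $\lambda$; Lemma~\ref{resdifflemma} extends this to all $\lambda\in\rho(A_{\delta,\alpha})\cap\rho(A_{\rm free})$, and Lemma~\ref{splemma}(ii) yields the $\sS_p$ assertion for $p>(n-1)/3$.

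The principal technical obstacle in (i) is that $\Theta$ is a genuine multi-valued self-adjoint relation whenever $\alpha$ vanishes somewhere, so $0$ may lie in $\sess(\Theta)$ and Lemma~\ref{pr.inv_bdd}/Theorem~\ref{th.resolv_diff1} are not directly available. A direct manipulation rewrites the inverse as $M_\alpha(I-\ov{\wt M(\lambda)}M_\alpha)^{-1}$, where $M_\alpha$ denotes multiplication by $\alpha$; since $\ov{\wt M(\lambda)}M_\alpha$ is compact on $L^2(\Sigma)$, boundedness of this inverse reduces by the Fredholm alternative to the triviality of $\ker(I-\ov{\wt M(\lambda)}M_\alpha)$. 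Any $y$ in this kernel satisfies $y=\ov{\wt M(\lambda)}(\alpha y)$; the analogue of Lemma~\ref{le.interpolation} for $\wt M$ gives $\ov{\wt M(\lambda)}\colon L^2(\Sigma)\to H^1(\Sigma)$, so $y\in H^1(\Sigma)$ and $\varphi\defeq\alpha y\in H^1(\Sigma)\subset H^{1/2}(\Sigma)=\ran\wt\Gamma_0$. Then $f=\wt\gamma(\lambda)\varphi$ lies in $\dom A_{\delta,\alpha}$ with $(A_{\delta,\alpha}-\lambda)f=0$; self-adjointness of $A_{\delta,\alpha}$ (Theorem~\ref{thm.sa2}) and $\lambda\notin\RR$ force $f=0$, hence $\varphi=0$ and in turn $y=\ov{\wt M(\lambda)}(\alpha y)=0$.

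For (ii) the triple $\{L^2(\Sigma),\wh\Gamma_0,\wh\Gamma_1\}$ has $\ker\wh\Gamma_0=A_{\rm N,\rm i}\oplus A_{\rm N,\rm e}$ rather than $A_{\rm free}$, so I would telescope
\[
  (A_{\delta',\beta}-\lambda)^{-1}-(A_{\rm free}-\lambda)^{-1}
  = \bigl[(A_{\delta',\beta}-\lambda)^{-1}-(A_{\rm N,\rm i}\oplus A_{\rm N,\rm e}-\lambda)^{-1}\bigr]
  + \bigl[(A_{\rm N,\rm i}\oplus A_{\rm N,\rm e}-\lambda)^{-1}-(A_{\rm free}-\lambda)^{-1}\bigr].
\]
The parameter in the first bracket is $\Theta=M_\beta$, a bounded self-adjoint operator; since $\beta\in C^1(\Sigma)$ with $\beta\ne 0$ on the compact set $\Sigma$, one has $0\notin\sess(\beta)$, so Theorem~\ref{th.resolv_diff1} with $\frA=\sS_{3/(2(n-1)),\infty}$ (using \eqref{eq:gammacoupling}) places this bracket in $\sS_{3/(n-1),\infty}(L^2(\RR^n))$. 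The second bracket is in $\sS_{2/(n-1),\infty}(L^2(\RR^n))$ by Lemma~\ref{lem:FreeDirNeu}, and the sum lies in the larger class $\sS_{2/(n-1),\infty}$, giving $s_k=O(k^{-2/(n-1)})$; Lemma~\ref{splemma}(ii) then yields the $\sS_p$ assertion for $p>(n-1)/2$.
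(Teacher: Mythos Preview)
Your argument is correct, and for part (ii) it coincides with the paper's: telescope through $A_{\rm N,i}\oplus A_{\rm N,e}$, apply Theorem~\ref{th.resolv_diff1} to the first bracket (this is Theorem~\ref{thm5}), and invoke Lemma~\ref{lem:FreeDirNeu} for the second.

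For part (i), however, you manufacture an obstacle that is not there. You write that since $\Theta=\{(\alpha y,y):y\in L^2(\Sigma)\}$ may be multi-valued, ``$0$ may lie in $\sess(\Theta)$'' and Theorem~\ref{th.resolv_diff1} is unavailable. But $\Theta^{-1}$ is precisely the multiplication operator $M_\alpha$, which is everywhere defined and bounded on $L^2(\Sigma)$; by the definition of the resolvent set for relations in Section~\ref{2.1}, this means $0\in\rho(\Theta)$, and in particular $0\notin\sess(\Theta)$. The paper simply observes this and applies Theorem~\ref{th.resolv_diff1} directly with $\frA=\sS_{3/(2(n-1)),\infty}$, using \eqref{eq:gammacoupling} and the compactness of $\ov{\wt M(\lambda)}$ (argued as in Lemma~\ref{le.interpolation}). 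Your Fredholm-alternative detour through $M_\alpha(I-\ov{\wt M(\lambda)}M_\alpha)^{-1}$ and the injectivity argument does work, but it reproves by hand a special case of what Lemma~\ref{pr.inv_bdd} already gives once $0\notin\sess(\Theta)$ is recognised.
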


\begin{proof}
(i) It follows from Theorem~\ref{thm.sa2} that the self-adjoint operator $A_{\delta,\alpha}$
corresponds to the self-adjoint linear relation
\[
  \wt\Theta=\left\{ \binom{\alpha h}{h} \colon h\in L^2(\Sigma)\right\}
\]
via the quasi boundary triple $\{L^2(\Sigma),\wt\Gamma_0,\wt\Gamma_1\}$, i.e.\
\[
  A_{\delta,\alpha}=\left\{\hat f\in\wt T\colon
  \begin{pmatrix}\wt\Gamma_0\hat f\\ \wt\Gamma_1\hat f\end{pmatrix}\in\wt\Theta\right\}.
\]
In order to apply Theorem~\ref{th.resolv_diff1}, we note that the closures of the values of the
Weyl function $\widetilde M(\lambda)$, $\lambda\in\rho(A_{\rm free})$, associated
with $\{L^2(\Sigma),\wt\Gamma_0,\wt\Gamma_1\}$ are compact operators in $L^2(\Sigma)$;
cf.\ Lemma~\ref{le.interpolation}.
Since $\alpha$ is assumed to be in $C^1(\Sigma)$, it follows that $\wt\Theta^{-1}$ is an everywhere defined
bounded operator in $L^2(\Sigma)$; in particular, $0\notin\sess(\wt\Theta)$. Therefore we can apply
Theorem~\ref{th.resolv_diff1}. Together with \eqref{eq:gammacoupling} we conclude that the
resolvent difference in \eqref{resdiff6} belongs to
\[
  \sS_{\frac{3}{2(n-1)},\infty}\cdot
  \sS_{\frac{3}{2(n-1)},\infty}
  = \sS_{\frac{3}{n-1},\infty}.
\]
This shows the statement on the singular values. By Lemma~\ref{splemma}(ii)
the resolvent difference \eqref{resdiff6} belongs to the classes  $\sS_p(L^2(\dR^n))$,
$p > \frac{n-1}{3}$.

(ii) This statement is an immediate consequence of Lemma~\ref{lem:FreeDirNeu} and
Theorem~\ref{thm5} below,  which is of independent interest.
\end{proof}

The following theorem tells us that $A_{\delta',\beta}$ is close to the direct sum of
the Neumann operators in the sense of spectral estimates for the resolvent differences.

% -------------------------------------------------------------------
\begin{theorem}
\label{thm5}
Let $\beta\in C^1(\Sigma)$ be real-valued and assume that $\beta\ne0$ on $\Sigma$.
Further, let $A_{\rm free}$ and $A_{\delta^\prime,\beta}$ be as above and let
$A_{\rm N, i}\oplus A_{\rm N, e}$ be the orthogonal sum of the Neumann operators
on the interior and exterior domain from \eqref{eq:ADAN2}.
Then for all $\lambda\in \rho(A_{\delta^\prime,\beta})\cap \rho(A_{\rm
N, i} \oplus A_{\rm N, e}) $ the singular values $s_k$ of the
resolvent difference
\begin{equation}
\label{resdiff8}
  (A_{\delta^\prime,\beta} - \lambda)^{-1} - (A_{\rm N, i} \oplus
  A_{\rm N, e} -\lambda)^{-1}
\end{equation}
satisfy $s_k = O(k^{-\frac{3}{n-1}})$, $k\rightarrow\infty$, and the
expression in \eqref{resdiff8} is in $\sS_p(L^2(\dR^n))$ for all
$p > \frac{n-1}{3}$.
\end{theorem}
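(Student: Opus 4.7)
The plan is to apply Theorem~\ref{th.resolv_diff1} using the quasi boundary triple $\{L^2(\Sigma),\wh\Gamma_0,\wh\Gamma_1\}$ from Proposition~\ref{qbtthm2}(ii). Since $\ker\wh\Gamma_0=A_{\rm N,i}\oplus A_{\rm N,e}$, this triple is designed precisely so that the direct sum of the Neumann operators plays the role of the reference extension $A_0$ in the abstract framework, and the resolvent difference in \eqref{resdiff8} is exactly the quantity treated in that theorem. First I would identify the abstract parameter corresponding to $A_{\delta^\prime,\beta}$: the interface condition $\wh\Gamma_1\hat f=\beta\,\wh\Gamma_0\hat f$ means that, as in the proof of Theorem~\ref{thm.sa2}, we should take $\wh\Theta=\beta$ (the graph of multiplication by $\beta$ in $L^2(\Sigma)$), so that $A_{\wh\Theta}=A_{\delta^\prime,\beta}$.

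Next I would verify the three hypotheses of Theorem~\ref{th.resolv_diff1}. Compactness of $\ov{\wh M(\lambda_0)}$ in $L^2(\Sigma)$ for $\lambda_0\in\rho(A_{\rm N,i}\oplus A_{\rm N,e})\cap\rho(A_{\rm free})$ was already observed inside the proof of Theorem~\ref{thm.sa2}: since $\ker\wh\Gamma_1=A_{\rm free}$ is self-adjoint, Proposition~\ref{gammaprop}(iii) shows that $\wh M(\lambda_0)$ maps $H^{1/2}(\Sigma)$ onto $H^{3/2}(\Sigma)$, and the duality/interpolation argument of Lemma~\ref{le.interpolation} yields $\ov{\wh M(\lambda_0)}\in\cB(L^2(\Sigma),H^1(\Sigma))$, hence compactness in $L^2(\Sigma)$ via the compact embedding $H^1(\Sigma)\hookrightarrow L^2(\Sigma)$. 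The Schatten-type assumption $\wh\gamma(\lambda_1)^*\in\frS_{3/(2(n-1)),\infty}^*(L^2(\dR^n),L^2(\Sigma))$ is precisely the second line of \eqref{eq:gammacoupling}. Finally, the assumptions $\beta\in C^1(\Sigma)$ and $\beta\neq 0$ on the compact surface $\Sigma$ force $|\beta|\geq c>0$, so $\wh\Theta$ is boundedly invertible on $L^2(\Sigma)$ and in particular $0\notin\sess(\wh\Theta)$; the self-adjointness of $A_{\wh\Theta}=A_{\delta^\prime,\beta}$ is exactly Theorem~\ref{thm.sa2}.

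With these three hypotheses in place, Theorem~\ref{th.resolv_diff1} applied with $\frA=\frS_{3/(2(n-1)),\infty}$, together with Lemma~\ref{splemma}(iii) (or equivalently Remark~\ref{rem.resdiff_Sp}), gives
\[
(A_{\delta^\prime,\beta}-\lambda)^{-1}-(A_{\rm N,i}\oplus A_{\rm N,e}-\lambda)^{-1}\in \frS_{3/(2(n-1)),\infty}\cdot\frS_{3/(2(n-1)),\infty}=\frS_{3/(n-1),\infty}(L^2(\dR^n))
\]
for every $\lambda\in\rho(A_{\delta^\prime,\beta})\cap\rho(A_{\rm N,i}\oplus A_{\rm N,e})$, which is the claimed singular value decay $s_k=O(k^{-3/(n-1)})$. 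The $\frS_p$-membership for $p>\frac{n-1}{3}$ then follows from Lemma~\ref{splemma}(ii). The argument is structurally parallel to that of Theorem~\ref{thm4}(i); no genuinely new obstacle arises, the only substantive change being to replace the triple $\{L^2(\Sigma),\wt\Gamma_0,\wt\Gamma_1\}$ by $\{L^2(\Sigma),\wh\Gamma_0,\wh\Gamma_1\}$ and to relabel the abstract parameter accordingly, while the decisive singular value bound on $\wh\gamma(\lambda)^*$ is already packaged in \eqref{eq:gammacoupling}.
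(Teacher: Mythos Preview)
Your proposal is correct and follows essentially the same approach as the paper's proof: both identify $\wh\Theta=\beta$, verify the hypotheses of Theorem~\ref{th.resolv_diff1} for the quasi boundary triple $\{L^2(\Sigma),\wh\Gamma_0,\wh\Gamma_1\}$ (compactness of $\ov{\wh M(\lambda_0)}$ via the argument of Lemma~\ref{le.interpolation}, the $\wh\gamma$-estimate from \eqref{eq:gammacoupling}, and $0\notin\sess(\wh\Theta)$ together with self-adjointness from Theorem~\ref{thm.sa2}), and then invoke Lemma~\ref{splemma} as in the proof of Theorem~\ref{thm4}(i). Your write-up is in fact more explicit than the paper's terse version, but the logical route is identical.
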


\begin{proof}
According to Theorem~\ref{thm.sa2} the self-adjoint operator
$A_{\delta^\prime,\beta}$ is given by $\ker(\wh\Gamma_1-\wh\Theta\wh\Gamma_0)$,
where $\wh\Theta=\beta$ is the multiplication operator by $\beta$ in $L^2(\Sigma)$.
The assumptions $\beta\in C^1(\Sigma)$ and $\beta\ne0$ on $\Sigma$ imply
that $0\notin\sess(\wh\Theta)$. Note also that the closures of the values
$\widehat M(\lambda)$, $\lambda\in\rho(A_{\rm N,i}\oplus A_{\rm N,e})$, associated with
$\{L^2(\Sigma),\wh\Gamma_0,\wh\Gamma_1\}$ are compact in $L^2(\Sigma)$;
cf.\ Lemma~\ref{le.interpolation}.
Thus we can apply Theorem~\ref{th.resolv_diff1}, and as in the proof of Theorem~\ref{thm4}(i)
we obtain the statement.
\end{proof}

\begin{remark}
Let $T_1$ and $T_2$ be self-adjoint operators in a Hilbert space $\cH$.
We write
\begin{equation*}
  T_1 \stackrel{\gamma}{\text{\textendash\textendash\textendash\textendash}} T_2
\end{equation*}
if the difference of the resolvents of $T_1$ and $T_2$ belongs to $\sS_{\gamma,\infty}(\cH)$.
With this notation, Lemma~\ref{lem:FreeDirNeu},
Theorem~\ref{thm4} and Theorem~\ref{thm5} can be illustrated as follows:
\begin{equation*}
  A_{\rm N,i}\oplus A_{\rm N,e}
  \stackrel{\frac{3}{n-1}}{\text{\textendash\textendash\textendash\textendash\textendash}}
  A_{\rm\delta^{\prime},\beta}
  \stackrel{\frac{2}{n-1}}{\text{\textendash\textendash\textendash\textendash\textendash}}
  A_{\rm free}
  \stackrel{\frac{3}{n-1}}{\text{\textendash\textendash\textendash\textendash\textendash}}
  A_{\rm\delta,\alpha}
  \stackrel{\frac{2}{n-1}}{\text{\textendash\textendash\textendash\textendash\textendash}}
  A_{\rm D,i}\oplus A_{\rm D,e}
\end{equation*}
\end{remark}

% *******************************************************************
\subsection*{Acknowledgements}
M.~Langer was supported by the Engineering and Physical Sciences Research
Council (EPSRC) of the UK, grant EP/E037844/1.
V.~Lotoreichik was supported by the personal grant 2.1/30-04/035 of
the government of St.~Petersburg and the Leonard Euler programme of DAAD, grant
50077360.

% *******************************************************************

\end{document}